\theoremstyle{plain}
\newtheorem{thm}{Theorem}[section]
\newtheorem{claim}[thm]{Claim}
\newtheorem{corollary}[thm]{Corollary}
\newtheorem{definition}[thm]{Definition}
\newtheorem{example}[thm]{Example}
\newtheorem{lemma}[thm]{Lemma}
\newtheorem{proposition}[thm]{Proposition}
\newtheorem{remark}[thm]{Remark}
\newtheorem{theorem}[thm]{Theorem}
\numberwithin{equation}{section}
\newcommand{\N}{\mathbb{N}}
\newcommand{\R}{\mathbb{R}}
\begin{document}

\title[Regularization by Lasry-Lions method on manifolds]{Regularization by sup-inf convolutions on Riemannian manifolds: an extension of Lasry-Lions theorem to manifolds of bounded curvature}
\author{D. Azagra}
\address{ICMAT (CSIC-UAM-UC3-UCM), Departamento de An{\'a}lisis Matem{\'a}tico,
Facultad Ciencias Matem{\'a}ticas, Universidad Complutense, 28040, Madrid, Spain}
\email{azagra@mat.ucm.es}

\author{J. Ferrera}
\address{Departamento de An{\'a}lisis Matem{\'a}tico,
Facultad Ciencias Matem{\'a}ticas, Universidad Complutense, 28040, Madrid, Spain}
\email{ferrera@mat.ucm.es}

\date{January 20, 2014}

\keywords{approximation, convex function, semiconcave function, Lipschitz, Riemannian manifold, distance function, Lasry-Lions regularization, $C^{1,1}$ function}

\subjclass{53C21, 58B10, 46T05, 35F21, 58C20, 53B20}

\begin{abstract}
We show how Lasry-Lions's result on regularization of functions defined on $\R^n$ or on Hilbert spaces by sup-inf convolutions with squares of distances can be extended to (finite or infinite dimensional) Riemannian manifolds $M$ of bounded sectional curvature. More specifically, among other things we show that if the sectional curvature $K$ of $M$ satisfies $-K_0\leq K\leq K_0$ on $M$ for some $K_0>0$, and if the injectivity and convexity radii of $M$ are strictly positive, then every bounded, uniformly continuous function $f:M\to\R$ can be uniformly approximated by globally $C^{1,1}$ functions defined by
$$
(f_{\lambda})^{\mu}=\sup_{z\in M}\inf_{y\in M}\{f(y)+\frac{1}{2\lambda} d(z,y)^{2}-\frac{1}{2\mu}d(x,z)^2\}
$$
as $\lambda, \mu\to 0^{+}$, with $0<\mu<\lambda/2$.
Our definition of (global) $C^{1,1}$ smoothness is intrinsic and natural, and it
reduces to the usual one in flat spaces, but we warn the reader that, in the noncompact
case, this definition differs from other notions of (rather local) $C^{1,1}$ smoothness that have been
recently used, for instance, by A. Fathi and P. Bernard (based on charts).

The importance of this regularization method lies (rather than on the degree of smoothness
obtained) on the fact that the correspondence $f\mapsto (f_{\lambda})^{\mu}$ is explicit
and preserves many significant geometrical properties that the given functions $f$ may have, such as invariance by a set of isometries, infima,
sets of minimizers, ordering, local or global Lipschitzness, and (only when one additionally
assumes that $K\leq 0$) local or global convexity.

We also give two examples showing that this result completely fails, even for (nonflat)
Cartan-Hadamard manifolds, whenever $f$ or $K$ are not bounded.
\end{abstract}

\maketitle

\section{Introduction and main results}

\noindent Throughout the paper, for a function
$f:M\to\mathbb{R}\cup\{+\infty\}$, we
 define
    $$
    f_{\lambda}(x)=\inf_{y\in M}\{f(y)+\frac{1}{2\lambda}d(x,y)^{2}\}.
    $$
Similarly, for a function $g:M\to\R\cup\{-\infty\}$ we define
$$
g^{\mu}(x)=\sup_{y\in M}\{f(y)-\frac{1}{2\mu}d(x,y)^{2}\}.
$$
Observe that $g^{\mu}=-(-g)_{\mu}$, and therefore all properties of functions of the form
$f_{\lambda}$ have an obvious analogue for functions of the form $f^{\mu}$.
In \cite{Lasry-Lions}, J.-M. Lasry and P.-L. Lions proved that, if $M=E=\R^n$ or a
Hilbert space, and if $f:E\to\R$ is bounded and uniformly continuous,
then the functions $(f_{\lambda})^{\mu}$ are of class $C^{1,1}(E)$ and converge to $f$ uniformly on
$E$ as $\lambda, \mu\to 0^{+}$. The importance of this regularization method lies on the fact that the correspondence $f\mapsto (f_{\lambda})^{\mu}$ is explicit
and preserves many significant geometrical properties that the given functions $f$ may have, such as invariance by a set of isometries, infima,
sets of minimizers, ordering, local or global Lipschitzness, and local or global convexity.
These facts make this regularization method an invaluable tool in optimization, nonsmooth analysis, and many
other areas of pure and applied mathematics.
Lasry-Lions' regularization technique has also very strong connections with PDE theory, through the Lax-Oleinik semigroup of a
Hamilton-Jacobi equation. In fact the functions $u(\lambda, x)=f_{\lambda}(x)$ (respectively
$v(\mu, x)=h^{\mu}(x)$) are the viscosity solutions of the equations
$\frac{\partial u}{\partial \lambda}+
\frac{1}{2}\|\nabla u\|^{2}=0$ on $\R^{+}\times E$ with initial data $u(0, x)=f(x)$ (resp.
$\frac{\partial v}{\partial \mu}-
\frac{1}{2}\|\nabla v\|^{2}=0$ on $\R^{+}\times E$, with initial data $v(0, x)=h(x)$).

It is natural to ask whether Lasry-Lions' theorem remains true in the Riemannian setting, as its potential
applications would also be significant in this field.
It is by now known that the Lasry-Lions Theorem is true for compact Riemannian manifolds in
a more general form (for Lax-Oleinik semigroups associated to Hamilton-Jacobi equations),
see \cite{Fathi, Bernard, FathisBook}, although the optimal Lipschitz constants of the gradients
$\nabla (f_{\lambda})^{\mu}$ do not seem to have been found. The proofs of \cite{Fathi, Bernard, FathisBook} rely on compactness arguments that cannot be extended to noncompact manifolds.
What is more surprising, in the literature there does not seem to be a definition of global $C^{1,1}$ smoothness which
makes sense for noncompact manifolds and has the usual properties that one should expect of such a notion. Fathi's definition in \cite{FathisBook} is only for
locally $C^{1,1}$ functions (a function $f:M\to\R$ is locally $C^{1,1}$ provided $f$ is $C^{1,1}$ when
looked at in charts). In \cite{Fathi} a pointwise Lipschitz constant is introduced by means
of a metric in the tangent bundle, but this notion has the disadvantage that, for instance when one endows $TM$ with Sasaki's metric, there are no
Lipschitz gradients with Lipschitz constant less than $1$, which is unpleasant, as for a
function $f\in C^{2}(M)$ we should expect that the Hessian of
$f$ controls the Lipschitz constant of the gradient $\nabla f$, namely that
$\textrm{Lip}(\nabla f)=\sup_{x\in M}\|D^{2}f(x)\|$.
On the other hand, if one tries to extend Bernard's definition of
$C^{1,1}$ smoothness from compact manifolds \cite{Bernard} to noncompact manifolds, then one
obtains different classes of global $C^{1,1}$ functions, depending on the atlases one uses. And, even in the compact case, {\em the} Lipschitz constant of a gradient $\nabla f$ cannot be
defined through charts (unless one exclusively uses very special charts, like the exponential ones, see Theorem \ref{characterizations of Lipschitz gradients} below).

In this paper we present an intrinsic definition of global $C^{1,1}$ smoothness which
makes sense for every Riemannian manifold, reduces
to the usual one in flat spaces, gives rise to the same class of $C^{1,1}$ functions in the
compact case as Fathi's and Bernard's definitions, allows one to deal with sharp Lipschitz
constants of gradients, and meets most, if not all, of the expectations one
may have about a reasonable definition of global $C^{1,1}$ smoothness. See Definitions
\ref{definition of Lipschitzness of a gradient} and \ref{definiton of semiconvex function},
and Theorem \ref{characterizations of Lipschitz gradients} below.

Returning to the extension of Lasry-Lions regularization technique to Riemannian manifolds,
our main result is the following.

\begin{theorem}\label{main theorem}
Let $M$ be a Riemannian manifold (possibly infinite dimensional) with sectional
curvature $K$ such that $-K_0\leq K\leq K_0$ for some $K_0\geq 0$, and such that the injectivity
and convexity radii of $M$ are strictly positive. Let $f:M\to\R$ be
uniformly continuous and bounded, and $q>1$. Then there exists
$\lambda_0=\lambda(K_0, q, f)>0$ such
that for every $\lambda\in (0, \lambda_0]$ and every
$\mu\in (0, \lambda/2q]$ the regularizations
$(f_{\lambda})^{\mu}$ are uniformly locally $\frac{q}{2\mu}$-semiconvex
and uniformly locally $\frac{q}{2\mu}$-semiconcave, and they
converge to $f$, uniformly on $M$, as $\lambda, \mu\to 0$.

In particular we have that $(f_{\lambda})^{\mu}\in C^{1,1}(M)$ for every such
$\lambda, \mu$. Moreover, we have the following estimations of the Lipschitz constants
of $\nabla\left( (f_{\lambda})^{\mu}\right)$:
$$
\textrm{Lip}\left(\nabla\left( (f_{\lambda})^{\mu}\right)\right)\leq \frac{q}{\mu} \textrm{ if
$M$ is finite dimensional, and}
$$
$$
\textrm{Lip}\left(\nabla\left( (f_{\lambda})^{\mu}\right)\right)\leq 6\frac{q}{\mu} \textrm{ if
$M$ is infinite dimensional.}
$$
Finally, if $f$ is Lipchitz then so is $(f_{\lambda})^{\mu}$, and we have
$$
\lim_{\lambda, \mu\to 0^{+}}\textrm{Lip}\left( (f_{\lambda})^{\mu}\right)=\textrm{Lip}(f).
$$
\end{theorem}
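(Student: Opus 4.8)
The plan is to follow the classical Lasry--Lions two-step scheme, handling first the inf-convolution $f_{\lambda}$ and then the sup-convolution applied to it, but replacing every flat second-order computation by a curvature-comparison estimate for the squared distance. The structural starting point is that, for each fixed $y$, the function $x\mapsto f(y)+\frac{1}{2\lambda}d(x,y)^2$ is uniformly semiconcave on a ball of controlled radius: this is exactly where the lower bound $-K_0\leq K$ together with the positivity of the injectivity and convexity radii enter, through the Hessian comparison theorem applied to $x\mapsto d(x,y)^2$, whose second derivative is dominated above by a curvature-dependent factor that converges to the flat value $\frac{1}{\lambda}$ as the radius shrinks. First I would fix $\lambda_0$ small enough (depending on $K_0$, $q$ and the modulus of continuity $\omega$ of $f$) that, for $\lambda\leq\lambda_0$, the infimum defining $f_\lambda(x)$ is effectively taken over $y$ in such a comparison ball; boundedness and uniform continuity of $f$ give precisely this localization. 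Since an infimum of functions that are uniformly $\frac{1}{2\lambda}(1+o(1))$-semiconcave is again uniformly semiconcave with the same modulus, I conclude that $f_{\lambda}$ is uniformly locally $\frac{q}{2\lambda}$-semiconcave, the factor $q>1$ absorbing the deviation from flatness on small balls.

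Next I would apply the sup-convolution. The function $(f_{\lambda})^{\mu}$ is a supremum over $z$ of $z\mapsto f_{\lambda}(z)-\frac{1}{2\mu}d(x,z)^2$, and since $-\frac{1}{2\mu}d(x,z)^2$ has Hessian in $x$ bounded below by a curvature factor controlled this time by the upper bound $K\leq K_0$, each of these functions is uniformly $\frac{q}{2\mu}$-semiconvex; as a supremum preserves semiconvexity, the uniform local $\frac{q}{2\mu}$-semiconvexity of $(f_{\lambda})^{\mu}$ follows. The genuinely delicate point, and the step I expect to be the main obstacle, is showing that $(f_{\lambda})^{\mu}$ is \emph{also} uniformly locally $\frac{q}{2\mu}$-semiconcave, i.e. that the sup-convolution does not destroy the semiconcavity already carried by $f_{\lambda}$. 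In flat space this rests on the elementary fact that when the penalty coefficient $\frac{1}{\mu}$ dominates the semiconcavity constant $\frac{1}{\lambda}$ of $f_\lambda$, the maximized function is strongly concave in $z$, the maximizer is unique and varies controllably, and $(f_\lambda)^\mu$ inherits a semiconcavity bound of the form $\frac{1/\lambda}{1-\mu/\lambda}$. On a manifold one cannot simply add Hessians: the parabola $-\frac{1}{2\mu}d(x,z)^2$ and the supremum must be analyzed through the exponential map and Jacobi-field deformation, so the task is to show that the composite second-order inequality still closes up, uniformly on small balls, to yield the bound $\frac{q}{2\mu}$. The constraint $0<\mu<\lambda/2q$ is precisely what forces $\frac{1}{\lambda}\leq\frac{1}{2q\mu}\leq\frac{q}{2\mu}$ and leaves enough room, after the curvature corrections, for the inequality to absorb into the clean constant $\frac{q}{2\mu}$.

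Once uniform local $\frac{q}{2\mu}$-semiconvexity and $\frac{q}{2\mu}$-semiconcavity are both in hand, Theorem \ref{characterizations of Lipschitz gradients} applies directly and gives $(f_{\lambda})^{\mu}\in C^{1,1}(M)$ together with the gradient estimates $\textrm{Lip}(\nabla((f_{\lambda})^{\mu}))\leq q/\mu$ in the finite-dimensional case and $\leq 6q/\mu$ in the infinite-dimensional case, the factor $6$ being supplied by that characterization (in infinite dimensions one loses a sharp Riesz-type identification and pays this price). For the uniform convergence I would sandwich each convolution: from $f(y)\geq f(x)-\omega(d(x,y))$ one gets $f(x)-\sup_{t\geq 0}\{\omega(t)-\frac{t^2}{2\lambda}\}\leq f_{\lambda}(x)\leq f(x)$, and the supremum tends to $0$ as $\lambda\to 0$; a symmetric two-sided bound for the sup-convolution step then yields $\|(f_{\lambda})^{\mu}-f\|_{\infty}\to 0$ as $\lambda,\mu\to 0$. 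Finally, when $f$ is $L$-Lipschitz, both convolution steps preserve Lipschitzness with constants converging to $L$; combining the lower bound $\textrm{Lip}((f_{\lambda})^{\mu})\geq L-o(1)$, which follows from the uniform convergence just established, with the matching upper bound $\textrm{Lip}((f_{\lambda})^{\mu})\leq L+o(1)$ coming from the explicit convolution estimates, gives $\lim_{\lambda,\mu\to 0^{+}}\textrm{Lip}((f_{\lambda})^{\mu})=\textrm{Lip}(f)$.
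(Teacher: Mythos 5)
Your outline reproduces the paper's architecture correctly up to the point you yourself flag as ``the genuinely delicate point,'' and there it stops: the uniform local $\frac{q}{2\mu}$-semiconcavity of $(f_{\lambda})^{\mu}$ is described as ``the task'' (``show that the composite second-order inequality still closes up, uniformly on small balls'') but no argument is given, and this is precisely the main content of the theorem. The paper resolves it not through your proposed maximizer-based route (strong concavity in $z$, unique maximizer varying controllably), which is hard to run on a manifold exactly because one cannot add Hessians of squared distances based at different points, but through a joint-concavity compensation lemma (Lemma \ref{convexity lemma}(1)): there is a radius $R=R(K_0,q)$ such that for $A\geq 2C$, $B\geq qA$ the function $(x,y)\mapsto A\,d(x,y)^2+B\,d(x,x_0)^2-C\,d(y,y_0)^2$ is convex on $B(x_0,R)\times B(x_0,R)$, uniformly in $x_0,y_0$. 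This is proved by the second variation formula together with two-sided Rauch comparison (splitting Jacobi fields into tangential and normal parts) to get $D^2\psi(x,y)(v,w)^2\geq 2(1-\varepsilon)\left(\|v\|^2+\|w\|^2\right)-4(1+r)\|v\|\,\|w\|$ for $\psi=d^2$, followed by a completion-of-squares analysis showing $B\geq qA\geq 2qC$ suffices --- this is where the precise constraint $\mu\leq\lambda/2q$ and the constant $\frac{q}{2\mu}$ actually come from, rather than from your heuristic $\frac{1/\lambda}{1-\mu/\lambda}$ bookkeeping. One then writes $f_{\lambda}(y)-\frac{1}{2\mu}d(x,y)^2-B_{\mu}d(x,x_0)^2$ as a sum of two jointly concave functions (using the semiconcavity of $f_{\lambda}$ with constant $C_{\lambda}=\frac{q}{2\lambda}$ from the first step) and invokes Lemma \ref{the partial inf of a jointly convex is convex}: a partial supremum of a jointly concave function is concave. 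Without this lemma, or some equivalent device, your proof does not close.

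Two smaller points. First, your assertion that ``both convolution steps preserve Lipschitzness with constants converging to $L$'' is not automatic on a manifold: on negatively curved spaces inf-convolution can increase Lipschitz constants, and the paper needs a dedicated comparison estimate, $d\left(\exp_{x}(L_{zx}w),\exp_{z}(w)\right)\leq(1+\varepsilon)\,d(x,z)$ for small $d(x,z)$ and $\|w\|$ (proved via Rauch comparison against the hyperbolic plane of curvature $-K_0$), to get $\textrm{Lip}(f_{\lambda})\leq(1+\varepsilon)\textrm{Lip}(f)$. Second, your explanation of the factor $6$ in infinite dimensions (``losing a sharp Riesz-type identification'') is off: the bound $6q/\mu$ comes from Proposition \ref{semiconvex and semiconvave implies C11} (semiconvex plus semiconcave implies $C^{1,1}$ with $\textrm{Lip}(\nabla f)\leq 12C$, via Taylor estimates on the discrepancy between parallel transport and $d\exp$), while the sharp constant $q/\mu$ in finite dimensions uses Bangert's extension of Alexandroff's almost-everywhere second differentiability, which is unavailable in infinite dimensions.
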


In section 8 we give two examples showing that this result fails (even on a Cartan Hadamard
manifold) if $f$ or $K$ are not bounded. In particular it is clear that the results claimed
without proof in \cite{AnguloVelasco} for Cartan-Hadamard manifolds are totally wrong.

Nevertheless, even if $f$ or $K$ are not bounded,
if one assumes that $K$ is bounded on bounded subsets $B$ of $M$ (which is always the case if
$M$ is complete and finite dimensional), and that $f$ is quadratically minorized on $M$ and
uniformly continuous on bounded subsets of $M$, then the convergence of the functions
$(f_{\lambda})^{\mu}$ to $f$ is uniform on
bounded sets $B$ of $M$, and these functions are of class $C^{1,1}(B)$ for
sufficiently small $\lambda, \mu$ depending on $B$. Of course, in this case one has in general
that
$\textrm{Lip}\left(\nabla(f_{\lambda})^{\mu}_{|_{B}}\right)\to\infty$  as $B$ grows
large.

It is about time we explained what we mean by a $C^{1,1}$ function. If $U$ is an open subset of $\R^n$ or
a Hilbert space and $f:U\to\R$, saying that $f\in C^{1,1}(U)$ just
means that $f\in C^{1}(U)$ and the gradient $\nabla f$ is a Lipschitz mapping from $U$
into $\R^n$, that is, there exists $C\geq 0$ such that $\|\nabla f(x)-\nabla f(y)\|\leq C\|x-y\|$
for every $x,y\in U$. One says that $C$ is a Lipschitz constant for $\nabla f$, and the infimum
of all such $C$ is denoted by $\textrm{Lip}(f)$. The extension of this definition to the Riemannian setting is not
an obvious matter, since for a $C^1$ function $f:M\to\R$ the vectors $\nabla f(x)$ and $\nabla f(y)$
belong to different fibres of $TM$ and in general there is no global way to compare them that
serves all purposes one may have in mind. If one looks for an intrinsic definition
of $C^{1,1}$ smoothness, a natural attempt is to use a metric on $TM$. One can even define pointwise Lipschitz constants of gradients using metrics in $TM$, as Fathi  did in \cite{Fathi}: $$\textrm{Lip}_{x}(\nabla f)=\limsup_{y, z\to x}\frac{d_{TM}(\nabla f(y), \nabla f(z))}{d_{M}(y,z)}.$$
One may then set $\textrm{Lip}(\nabla f)=\sup_{x\in M}\textrm{Lip}_x(\nabla f)$. This leads to declaring a function $f\in C^{1}(M)$ to be of class $C^{1,1}(M)$ provided that the mapping $\nabla f:M\to TM$ is Lipschitz (with respect to the given metrics in $M$ and $TM$). Such a notion of $C^{1,1}$ smoothness can be practical in several ways, but, as we mentioned before, it has the
disadvantage that $\textrm{Lip}_x(f)$ is not finely controlled by the Hessian $D^{2}f(x)$ when $f\in C^{2}(M)$. Indeed, for any Riemannian manifold $M$, if one endows $TM$ with the Sasaki metric (see \cite{Sasaki, Sakai} for the precise definition), since the parallel translation of the zero vector along a geodesic of $M$ is always a geodesic in $TM$, one obtains, for every constant function $c$ on $M$, that $\nabla c(x)=0$ for every $x\in M$, hence also $D^{2}c(x)=0$ for every $x\in M$, and yet $\textrm{Lip}(\nabla c)=1$. Therefore, if one should use this definition of Lipschitzness for gradients, then one would not be able to relate the Lipschitz constants of $\nabla f$ with the semiconvexity and semiconcavity constants of $f$. This is the main reason why we will discard this definition in this paper.

Let us now present our definition of $C^{1,1}$ smoothness.
Let $M$ be a Riemannian manifold (possibly infinite dimensional). We will denote the injectivity radius of $M$ at a point
$x$ by $i(x)$, and the convexity radius of $M$ at $x$ by $c(x)$. We will also denote
$i(M)=\inf_{x\in M}i(x)$, and $c(M)=\inf_{x\in M}c(x)$. It is well known that $i(x)>0$ and
$c(x)>0$ for every $x\in M$ (but $i(M)$ and $c(M)$ may be zero).
Thus, for every $x_0\in M$ there exists $R>0$ such that the ball
$B(x_0, 2R)$ is convex and $\exp_{x}: B_{T_xM}(0,R)\to B(0,R)$ is a $C^{\infty}$ diffeomorphism
for every $x\in B(x_0, R)$. If $x, y\in B(x_0, R)$, let us denote by $L_{xy}: T_xM\to T_yM$ the linear isometry between these tangent spaces provided by parallel translation of vectors along the unique minimizing geodesic connecting the points $x$ and $y$.
More precisely, if $\gamma:[0, \ell]\to M$ is the unique geodesic with
$\gamma(0)=x$, $\gamma(\ell)=y$, $\ell =d(x,y)$, $h\in T_xM$, and $P:[0, \ell]\to TM$ is the unique parallel vector field along $\gamma$ with $P(0)=h$, then we define
$
L_{xy}(h)=P(\ell).
$
When $i(M), c(M)>0$, the isometry $L_{xy}:T_xM\to T_yM$ allows us to compare vectors (or covectors) which are in
different fibers of $TM$ (or $T^{*}M$), in a natural, semiglobal way. Even when the global injectivity
or convexity radii of $M$ vanish, the following definition still makes sense.
\begin{definition}\label{definition of Lipschitzness of a gradient}
{\em Let $M$ be a Riemannian manifold. We say that a function $f:M\to\R$ is of class
$C^{1,1}(M)$ provided $f\in C^{1}(M)$ and there exists $C\geq 0$ such that for every
$x_0\in M$ there exists $r\in \left(0, \min\{i(x_0), c(x_0)\}\right)$ such that $$\|\nabla f(x)-L_{yx}\nabla f(y)\|\leq Cd(x,y)$$ for every $x,y\in B(x_0, r)$. We call $C$ a Lipschitz constant of $\nabla f$. We also say that $\nabla{f}$ is $C$-Lipschitz, and define $\textrm{Lip}(\nabla f)$ as the infimum of all such $C$.}
\end{definition}
It should be noted that, when $i(M), c(M)>0$, this definition is equivalent to the (apparently stronger) following  one: $\|\nabla f(x)- L_{yx}\nabla f(y)\|\leq C d(x,y)$ for every $x, y$ with
$d(x,y)<\min\{ i(M), c(M)\}$.

As is well known in the Euclidean case, $C^{1,1}$ smoothness has much to do with semiconcavity
and semiconvexity of functions, and in the general Riemannian setting we should also expect to find
a strong connection between these notions. Our definition is also satisfactory in this respect,
as we will see soon, but let us first explain what we mean by semiconvex and semiconcave functions. Recall that a function $f:M\to\R$ is said to be convex provided $f\circ\gamma$
is convex on the interval $I\subseteq\R$ for every geodesic segment $\gamma:I\to M$. A function $h$ is called concave if $-h$
is convex.

\begin{definition}\label{definiton of semiconvex function}
{\em Let $M$ be a Riemannian manifold. We will say that a function
$f:M\to (-\infty, +\infty]$ is (globally) {\em semiconvex} if there exists $C>0$ such that
for every $x_0\in M$ the function $M\ni x\mapsto f(x)+C d(x,x_0)^{2}$ is convex.
Similarly, we say that $h:M\to [-\infty, +\infty)$ is (globally) {\em semiconcave} if
there exists $C>0$ such that $h-C d(\cdot, x_0)^{2} $ is concave on $M$, for every $x_0\in M$. Equivalently, $h$ is semiconcave if and only if $-h$ is semiconvex.

We will say that $f$ is {\em locally semiconvex} (resp. {\em locally semiconcave}) if for every $x\in M$ there exists $r>0$ such that $f_{|_{B(x,r)}}: B(x,r)\to [-\infty, +\infty]$ is semiconvex (resp. semiconcave). If there exists $C\geq 0$ such that for every $x_0\in M$ there exists $r>0$ such that the function $B(x_0, r)\ni x\mapsto f(x)+C d(x,y_0)^{2}$ is convex for every $y_0\in B(x_0, r)$, then we will say that $f$ is {\em locally $C$-semiconvex}. We define {\em local $C$-semiconcavity} of a function in a similar way.

Finally, we will say that $f:M\to [-\infty, +\infty]$ is {\em uniformly locally semiconvex} (resp. {\em uniformly locally semiconcave}) provided that there exist numbers $C, R>0$ such that for every $x_0\in M$ the function $$
B(x_0, R)\ni x\mapsto f(x)+C d(x, x_0)^{2}$$ is convex (resp. concave). We will call $C$ a constant of uniformly local semiconvexity (resp. semiconcavity). We will also say that $f$ is uniformly locally $C$-semiconvex (resp, $C$-semiconcave).

It is clear that "globally semiconvex" $\implies$ "uniformly locally semiconvex" $\implies$ "locally semiconvex". }
\end{definition}
\begin{remark}
{\em In the case of a space of constant sectional curvature equal to $0$, in the definition we have just given we could have replaced the condition "there exists $C>0$ such that for every $x_0$ ..." with "there exist $C>0$ and $x_0$ such that ...", and the two definitions would have been equivalent. However, for spaces with nonzero curvature, such two definitions are not equivalent in general. For instance, if $M=\mathbb{H}^{n}$ is the hyperbolic space, the function $x\mapsto d(x,x_0)^{2}$ is $C^{\infty}$ everywhere and the norm of its Hessian  goes to $\infty$ as $d(x,x_0)\to\infty$. Hence this convex function (which would obviously have been semiconcave had we opted for the second
definition) does not belong to $C^{1,1}(\mathbb{H}^{n})$ (see Definition \ref{definition of Lipschitzness of a gradient}, Theorem \ref{characterizations of Lipschitz gradients} and Example \ref{counterexample on hyperbolic space} below). The reason for our choice is that we want a semiconcave and semiconvex function to be of class $C^{1,1}$, as it happens when the function is defined on $\R^n$ or the Hilbert space.}
\end{remark}

That Definition \ref{definition of Lipschitzness of a gradient} is quite satisfactory is clear
from the following.

\begin{theorem}\label{characterizations of Lipschitz gradients}
Let $M$ be a finite dimensional Riemannian manifold, $f\in C^{1}(M,\R)$, and $C\geq 0$. The following statements are equivalent:
\begin{enumerate}
\item $\nabla f$ is $C$-Lipschitz according to Definition \ref{definition of Lipschitzness of a gradient}.
\item For every $x\in M, v\in T_{x}M$ with $\|v\|=1$,
$$
\limsup_{t\to 0^{+}}\frac{1}{t}\|\nabla f(x)-L_{\exp_{x}(tv)x}\nabla f(\exp_{x}(tv))\|\leq C.
$$
\item For every $x_0\in M$ and $\varepsilon>0$ there exists $r>0$ such that
$$
|f\left(\exp_{x}(v)\right)-f(x)-\langle \nabla f(x), v\rangle |\leq \frac{C+\varepsilon}{2}\|v\|^{2}
$$
for every $x\in B(x_0, r)$ and $v\in B_{T_xM}(0,r)$.
\item For every $C'>C$ the function $f$ is locally $\frac{C'}{2}$-semiconvex and locally $\frac{C'}{2}$-semiconcave.
\item For every $x\in M$ and every $\varepsilon>0$ there exists $r>0$ such that, if $F:=f\circ \exp_{x}:B(0,r)\to\R$, then
    $$
    \|\nabla F(u)-\nabla F(v)\|\leq (C+\varepsilon)\|u-v\|
    $$
for every $u,v\in B_{T_xM}(0,r)$.
\item For every $x\in M$ and every $\varepsilon>0$ there exists $r>0$ such that, if $F:=f\circ \exp_{x}:B(0,r)\to\R$, then
    $$
    \|\nabla F(u)-\nabla F(0)\|\leq (C+\varepsilon)\|u\|
    $$
for every $u\in B_{T_xM}(0,r)$.
\end{enumerate}
Moreover, if $f\in C^{2}(M,\R)$ then any of the above statements is also equivalent to the following estimate for the Hessian of $f$:
\begin{enumerate}
\item[{(7)}] $\|D^2 f\|\leq C$.
\end{enumerate}
Finally, if $M$ is of bounded sectional curvature with
$i(M), c(M)>0$, any of the conditions $(1)-(6)$ is equivalent to
\begin{enumerate}
\item[{(4')}] For every $C'>C$ the function $f$ is uniformly locally $\frac{C'}{2}$-semiconvex
and uniformly locally $\frac{C'}{2}$-semiconcave,
\end{enumerate}
and also to
\begin{enumerate}
\item[{(1')}] There exists $R>0$ such that for every $x_0\in M$ we have
$$
\|L_{yx}(\nabla f(y))-\nabla f(x)\|\leq Cd(x,y)
$$ for every $x,y\in B(x_0, R)$.
\end{enumerate}
\end{theorem}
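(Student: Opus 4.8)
The plan is to isolate a single geometric comparison lemma as the engine of the whole proof, run a cycle of implications through the exponential charts, and treat the merely-$C^1$ case by Dini-derivative (mean value) arguments rather than by Hessian computations. Fix $x_0$ and work in a ball $B(x_0,r)$ smaller than $\min\{i(x_0),c(x_0)\}$. For $x\in B(x_0,r)$ and $q=\exp_x(u)$, write $F=f\circ\exp_x$; then $\nabla F(0)=\nabla f(x)$ and, taking adjoints with respect to the two metrics, $\nabla F(u)=(d\exp_x)_u^{*}\,\nabla f(q)$. The key estimate I would establish from Jacobi field theory and Rauch's comparison theorem is $\|(d\exp_x)_u-L_{xq}\|=o(\|u\|)$ as $\|u\|\to0$, improving under the two-sided bound $-K_0\le K\le K_0$ to $\|(d\exp_x)_u-L_{xq}\|\le a(K_0)\|u\|^2$ uniformly in $x$, together with $d(\exp_x u,\exp_x v)=(1+o(1))\|u-v\|$. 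Since $L_{qx}=L_{xq}^{*}$, this gives the transfer identity that $\nabla F(u)-\nabla F(0)$ and $L_{qx}\nabla f(q)-\nabla f(x)$ differ in norm by at most $\|(d\exp_x)_u-L_{xq}\|\,\|\nabla f(q)\|=o(\|u\|)$, which is exactly what lets all the infinitesimal conditions be identified modulo quantities absorbed into the free parameter $\varepsilon$.

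With this in hand I would close a cycle among $(1),(2),(6),(5),(3)$. The implication $(1)\Rightarrow(2)$ is immediate (restrict to radial directions, divide by $t$). For $(2)\Rightarrow(1)$ I connect $x,y$ by the minimizing geodesic $\gamma$ and set $g(t)=L_{\gamma(t)x}\nabla f(\gamma(t))$, a curve in the fixed space $T_xM$; because parallel transport composes along $\gamma$ and $L_{\gamma(s)x}$ is an isometry, $\|g(s+t)-g(s)\|=\|L_{\gamma(s+t)\gamma(s)}\nabla f(\gamma(s+t))-\nabla f(\gamma(s))\|$, so $(2)$ says $g$ has upper difference quotient $\le C$ everywhere and the one-variable Lipschitz-from-Dini lemma yields $\|g(0)-g(\ell)\|\le C\ell=Cd(x,y)$. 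The transfer identity gives $(6)\Rightarrow(2)$ (divide by $t$, let $t\to0$) and $(2)\Rightarrow(6)$ (pass first to $(1)$, put the second point at the centre $q=x$, then convert). I obtain $(1)\Rightarrow(5)$ by showing that in the chart the map $\nabla F$ has upper difference quotient $\le C+\varepsilon$ at every interior point — the curvature correction coming from differentiating $(d\exp_x)^{*}$ is $O(r)$, hence $\le\varepsilon$ for small $r$ — and integrating; since $(5)\Rightarrow(6)$ is trivial ($v=0$), the chart conditions close up, and $(6)\Leftrightarrow(3)$ is the Euclidean fact that the from-the-centre gradient bound and the from-the-centre quadratic Taylor bound are equivalent (one way by the fundamental theorem of calculus, the other by reabsorbing $\varepsilon$).

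For the semiconvexity characterization $(4)$ I would use the first-order (supporting gradient inequality) characterization of geodesic convexity of a $C^1$ function. The clean direction $(4)\Rightarrow(3)$ uses the trick of taking the base point $y_0=p$ equal to the point of expansion: then $d(\exp_p w,p)^2=\|w\|^2$ exactly, so $\tfrac{C'}{2}$-semiconvexity gives $f(\exp_p w)\ge f(p)+\langle\nabla f(p),w\rangle-\tfrac{C'}{2}\|w\|^2$ with no curvature correction, and semiconcavity gives the matching upper bound, i.e. $(3)$ for every $C'>C$. For $(3)\Rightarrow(4)$ the same inequality is run for a general nearby $y_0$, where I must add the strong convexity of $w\mapsto\tfrac{C'}{2}d(\exp_p w,y_0)^2$; under the curvature bound its Hessian exceeds $C'\bigl(1-O(K_0 r^2)\bigr)$, so for small $r$ the strict margin $C'>C$ absorbs the correction and $f+\tfrac{C'}{2}d(\cdot,y_0)^2$ is convex. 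The $C^2$ statement $(7)$ follows because at the centre of a normal chart the Christoffel symbols vanish, whence $D^2f(x)=\nabla^2F(0)$; then $(6)$ forces $\|\nabla^2F(0)\|\le C$, and conversely continuity of $\nabla^2 F$ together with the fundamental theorem of calculus recovers $(6)$. Finally, the uniform versions $(1')$ and $(4')$ are obtained by observing that, once $-K_0\le K\le K_0$ and $i(M),c(M)>0$, every $o(\cdot)$ and every threshold radius above becomes uniform in $x$ (the comparison estimate is $a(K_0)\|u\|^2$ with constant independent of $x$, and $d(\cdot,y_0)^2$ is uniformly strongly convex), so the existential $r$ may be replaced by a single $R<\min\{i(M),c(M)\}$ exactly as in the Remark following Definition \ref{definition of Lipschitzness of a gradient}.

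The main obstacle is the comparison estimate and its bookkeeping: I must control $\|(d\exp_x)_u-L_{xq}\|$ to the correct order and verify that the resulting discrepancies are genuinely $o(\|u\|)$ locally, and $a(K_0)\|u\|^2$ uniformly, so that they are swallowed by the arbitrary $\varepsilon$ in $(2),(3),(5),(6)$ and by the strict inequality $C'>C$ in $(4),(4')$ — getting these orders right is what separates a qualitative equivalence from one with the sharp constant $C$. The second delicate point is that $f$ is only $C^1$, so $\nabla f$ and $\nabla F$ need not be differentiable; every step that would ordinarily be a Hessian computation must be replaced by a one-dimensional Dini-derivative argument, once along geodesics (for $(2)\Rightarrow(1)$) and once in the chart (for $(1)\Rightarrow(5)$), and I must check throughout that parallel transport composes correctly so that these difference quotients really reduce to the quantities bounded by $(2)$.
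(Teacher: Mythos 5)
Your plan coincides with the paper's proof over its first half: your engine $\|(d\exp_x)_u-L_{x\,\exp_x(u)}\|=O(\|u\|^2)$ is exactly the paper's Proposition \ref{estimate of the difference between parallel transport and diff of exp} and Corollary \ref{estimate for composition of Lxy and diff exp}; your Dini argument for $(2)\Rightarrow(1)$ is the ``easy exercise'' the paper alludes to; $(6)\Rightarrow(2)$ by the transfer identity, the fundamental-theorem-of-calculus derivation of $(3)$, and the use of the uniform strong convexity of $d(\cdot,y_0)^2$ for $(3)\Rightarrow(4)$ (the paper's Lemma \ref{convexity lemma}(2) applied locally) all match. Where you genuinely diverge is at the crux. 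The paper's delicate implications are $(4)\Rightarrow(1)$ and $(4)\Rightarrow(5)$, and it proves them with Bangert's Alexandroff-type theorem: a locally semiconvex function on a finite-dimensional manifold has a Hessian $H_x(f)$ almost everywhere, the semiconvexity/semiconcavity pair gives $\|H_x(f)\|\le C'+\varepsilon$ a.e., and one integrates along geodesics (with a Fubini argument to approximate geodesics that do not lie a.e.\ in the set of twice-differentiability). This is precisely where the finite-dimension hypothesis is spent. You replace that by (i) the $y_0=p$ supporting-gradient trick for $(4)\Rightarrow(3)$ --- which is correct and genuinely slicker than anything at this node in the paper, since $d(\exp_p w,p)=\|w\|$ exactly, so no curvature correction appears --- and (ii) a claimed flat-model equivalence $(3)\Leftrightarrow(6)$ together with a Dini argument for $(1)\Rightarrow(5)$.

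The gap is the direction $(3)\Rightarrow(6)$, which in your architecture is the only road back from $(3)$ and $(4)$ into the Lipschitz-gradient conditions, and ``reabsorbing $\varepsilon$'' is not an argument for it. A two-sided quadratic Taylor bound at every base point forcing a Lipschitz gradient with the same constant is exactly the nontrivial semiconvex-plus-semiconcave-implies-$C^{1,1}$ theorem: naive manipulations (adding the Taylor bounds at $u$ and $w$) only yield the diagonal estimate $|\langle\nabla F(u)-\nabla F(w),u-w\rangle|\le C\|u-w\|^2$, and upgrading this to the full operator bound requires second-order symmetry, i.e.\ either a.e.\ twice differentiability (Alexandroff/Bangert, the paper's tool) or mollification in the flat chart --- note that either device consumes finite-dimensionality, so your route does not escape that hypothesis, it merely hides where it enters. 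Moreover $(3)$ lives in a moving family of exponential charts, so to feed it into a fixed-chart Euclidean statement you must control the transition maps $\exp_x^{-1}\circ\exp_y\circ L_{xy}$ to third order; this is the paper's Claim $\|w-h-v\|\le K_2(\|h\|+\|v\|)^3$ in Proposition \ref{semiconvex and semiconvave implies C11}, a second comparison lemma beyond your single stated engine. Two smaller soft spots: in $(1)\Rightarrow(5)$ your correction terms include holonomy (the discrepancy between $L_{px}$, $L_{qx}$ and $L_{pq}$ along different geodesics, of size $O(r\|w-u\|)$), not only the derivative of $(d\exp_x)^{*}$, though the orders do come out absorbable; and for $(1')$ you need your comparison estimate in the uniform form $a(K_0)\|u\|^2$ with a constant independent of $x$, which the paper's local Proposition \ref{estimate of the difference between parallel transport and diff of exp} does not provide (the paper's route to $(1')$ avoids needing it, running the Bangert argument with radii made uniform by the global curvature bound), so your uniformity claim requires a separate Jacobi-field ODE comparison. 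All of this is repairable, but as written your cycle does not close from $(3)/(4)$ back to $(1)$.
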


Theorems \ref{main theorem} and \ref{characterizations of Lipschitz gradients} are the main
results of this paper, but let us also mention a couple of auxiliary results that may be
useful in general.

Besides parallel translation, another natural, semiglobal way to compare vectors in different fibers $T_xM$, $T_yM$ of $TM$ with $d(x,y)<i(x)$ is by means of the differential of the exponential map
$$
d\exp_{x}\left(v\right):T(T_xM)_v\equiv T_xM\to T_yM,
$$
where $v=\exp_{x}^{-1}(y)$.
It is a straightforward consequence of the definition of $P$ as a solution to a linear ordinary differential equation with initial condition $P(0)=h$, and of the fact that $d\exp_{x}(0)(h)=h$, that
$$
\lim_{y\to x}\sup_{h\in T_xM, \|h\|=1}|d\exp_{x}\left(\exp_{x}^{-1}(y)\right)(h)-L_{xy}(h)|=0.
$$
That is,
$
\lim_{y\to x}\|d\exp_{x}\left(\exp_{x}^{-1}(y)\right)-L_{xy}\|_{\mathcal{L}\left(T_xM, T_yM\right)}=0.
$
However, in sections 5 and 6 we will need much sharper estimations on the rate of this convergence.
In particular, we will need to use the fact that, locally, one has
$$
\|d\exp_{x}\left(\exp_{x}^{-1}(y)\right)-L_{xy}\|_{\mathcal{L}\left(T_xM, T_yM\right)}=O\left(d(x,y)^{2}\right).
$$
This fact might be known, at least in the finite dimensional case, but we have not been able
to find a reference. Of course there are well known estimates of the form
$$
d\exp_{x}\left(t\frac{v}{\|v\|}\right)(th)-P(th)=O(t^3),
$$
see \cite[Chapter IX, Proposition 5.3]{Lang} for instance,
but we want this kind of estimate to hold locally uniformly with respect to $x,v, h$.
So we provide a proof in Section 4. As a consequence we will also show that
$$
\| d(\exp_{x}^{-1})(y)\circ L_{xy}-I\|_{\mathcal{L}\left(T_xM, T_xM\right)}
=O\left(d(x,y)^{2}\right)
$$
locally uniformly.

In section 3 we establish a convexity lemma which is one of the fundamental
ingredients of the proof that the regularizations $(f_{\lambda})^{\mu}$ are uniformly locally
semiconvex and semiconcave.  If $M$ is a Riemannian manifold of nonpositive sectional curvature $K$ with $i(M)>0$, $c(M)>0$, it is well known that the functions $B(x_0, R)\times B(x_0, R)\ni (x,y)\mapsto d(x,y)^2$ and $B(x_0, R)\ni x\mapsto d(x,x_0)^{2}$ are $C^\infty$ and convex, provided that $2R<\min\{i(M), c(M)\}$.
In Lemma \ref{convexity lemma for nonpositive K} below we will see that when $-K_0\leq K\leq 0$ and $i(M)>0$, $c(M)>0$, these functions are so evenly
convex that their sum, multiplied by a suitable positive number dependent only on $R, K_0$,
can compensate the concavity of the function $y\mapsto -d(y, y_0)^{2}$, in a uniform manner with respect to points $x_0, y_0\in M$ such that $d(x_0, y_0)<R$.

On the other hand, in the general case (for instance if $K>0$) it is not true that the mapping $(x,y)\mapsto d(x,y)$ is locally convex, not even when $(x,y)$ move in an arbitrarily small neighborhood of a point $(x_0, x_0)\in M\times M$.
In this situation it is remarkable that, if one assumes that the sectional curvature $K$
is bounded on $M$ (though not necessarily nonpositive), then one can show that this compensation
property still holds for sufficiently small $R$, depending on the bound for the curvature, but
independent of $x_0, y_0$. This is proved in Lemma \ref{convexity lemma}.

The rest of the paper is organized as follows. In Section 2 we gather several basic properties of the regularizations $f_{\lambda}$
which will be used in the rest of the paper. In Section 5 we prove that if $f:M\to\R$ is
locally $C$-semiconvex and locally $C$-semiconcave then $f\in C^{1,1}(M)$, with
$\textrm{Lip}(\nabla f)\leq 12 C$. In section 6 we trim this estimate down to an optimal $2C$ in the case when
$M$ is finite dimensional, and we prove Theorem \ref{characterizations of Lipschitz gradients}.
In Section 7 we combine all the results of the previous sections to produce a proof of
Theorem \ref{main theorem}.
Finally in Section 8 we show that in Theorem \ref{main theorem} one cannot dispense with
the boundedness assumptions on $f$ and $K$.

Our notation is mostly standard, and we generally refer to Sakai's book \cite{Sakai} for any unexplained terms. In Section 3 we will use the second variation formulae for the energy and length functionals, as well as the Rauch comparison theorems for Jacobi fields. We refer the reader to \cite{Sakai, CheegerEbin} for the finite-dimensional case, or to \cite{Klingenberg, Lang} for the infinite-dimensional case. In both cases, we will nevertheless use the notation of do Carmo's book \cite{doCarmo} for Jacobi fields along geodesics and their derivatives.

\medskip

\section{General properties of inf and sup convolutions}

The following Proposition shows how, under certain conditions, the
inf defining $f_{\lambda}(x)$ can be localized on a neighborhood of
the point $x$. We say that a function $f:M\to\mathbb{R}\cup\{+\infty\}$ is {\em quadratically minorized} provided that there exist $c>0$,
$x_{0}\in M$ such that
$$f(x)\geq -\frac{c}{2}(1+d(x, x_{0})^{2})$$ for all $x\in M$.
\begin{proposition}\label{localization}
Let $M$ be a Riemannian manifold,
$f:M\to\mathbb{R}\cup\{+\infty\}$ be quadratically minorized. Let $x\in M$ be such that $f(x)<+\infty$. Then, for
all $\lambda\in (0,\frac{1}{2c})$ and for all $\rho>\bar{\rho}$,
where $$\bar{\rho}=\bar{\rho}(x, \lambda,
c):=\left(\frac{2f(x)+c(2 d(x,x_{0})^{2}+1)}{1-2\lambda
c}\right)^{1/2},$$ we have that
$$
f_{\lambda}(x)=\inf_{y\in
B(x,\rho)}\{f(y)+\frac{1}{2\lambda}d(x,y)^{2}\}.
$$
Moreover, if $f$ is bounded on $M$, say $|f|\leq N$, then the infimum defining
$f_{\lambda}(x)$ can be restricted to the ball $B(x, 2\sqrt{N\lambda})$.
On the other hand, if $f$ is Lipschitz on $M$, then
the infimum defining
$f_{\lambda}(x)$ can be restricted to the ball
$B\left(x, 2\lambda\textrm{Lip}(f)\right)$.
\end{proposition}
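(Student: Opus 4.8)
The plan is to exploit the elementary fact that, taking $y=x$ in the defining infimum, one always has $f_{\lambda}(x)\leq f(x)$. Consequently, to confine the infimum to a ball $B(x,\rho)$ it suffices to show that every $y$ \emph{outside} that ball produces a candidate value $f(y)+\frac{1}{2\lambda}d(x,y)^{2}$ that is at least $f(x)$: since $y=x$ already lies in $B(x,\rho)$ and realizes the value $f(x)$, such far-away points cannot affect the infimum. Precisely, if $f(y)+\frac{1}{2\lambda}d(x,y)^{2}\geq f(x)$ whenever $d(x,y)\geq\rho$, then writing $\inf_{y\in M}=\min\{\inf_{B(x,\rho)},\inf_{d(x,y)\geq\rho}\}$ one has $\inf_{d(x,y)\geq\rho}\geq f(x)\geq\inf_{B(x,\rho)}$, whence $f_{\lambda}(x)=\inf_{y\in B(x,\rho)}$. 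All three assertions thus reduce to a suitable lower bound for $f(y)+\frac{1}{2\lambda}d(x,y)^{2}$ in terms of $t:=d(x,y)$.

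For the quadratically minorized case I would combine the minorization $f(y)\geq-\frac{c}{2}(1+d(y,x_{0})^{2})$ with the triangle inequality $d(y,x_{0})\leq t+d(x,x_{0})$ and the bound $(a+b)^{2}\leq 2a^{2}+2b^{2}$, giving $d(y,x_{0})^{2}\leq 2t^{2}+2d(x,x_{0})^{2}$ and hence
$$
f(y)+\frac{1}{2\lambda}d(x,y)^{2}\geq\Bigl(\frac{1}{2\lambda}-c\Bigr)t^{2}-\frac{c}{2}-c\,d(x,x_{0})^{2}.
$$
The leading coefficient $\frac{1}{2\lambda}-c=\frac{1-2\lambda c}{2\lambda}$ is strictly positive exactly because $\lambda<\frac{1}{2c}$; this is the sole role of that hypothesis, and it is what lets the quadratic penalty eventually dominate. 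Imposing that the right-hand side be $\geq f(x)$ is then a single quadratic inequality in $t$ whose solution is a half-line $t\geq\bar{\rho}$, so that any $\rho>\bar{\rho}$ works. One should also note that applying the minorization at the point $x$ itself gives $f(x)+\frac{c}{2}+c\,d(x,x_{0})^{2}\geq\frac{c}{2}d(x,x_{0})^{2}\geq 0$, which guarantees that the radicand defining $\bar{\rho}$ is nonnegative, so $\bar{\rho}$ is a well-defined real number.

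The remaining two assertions are the same argument with sharper lower bounds. If $|f|\leq N$, then $f(y)\geq-N$ and $f(x)\leq N$, so $f(y)+\frac{1}{2\lambda}t^{2}\geq -N+\frac{t^{2}}{2\lambda}\geq N\geq f(x)$ as soon as $t^{2}\geq 4N\lambda$, i.e. $t\geq 2\sqrt{N\lambda}$. If $f$ is Lipschitz, then $f(y)\geq f(x)-\mathrm{Lip}(f)\,t$, so $f(y)+\frac{1}{2\lambda}t^{2}\geq f(x)-\mathrm{Lip}(f)\,t+\frac{t^{2}}{2\lambda}\geq f(x)$ as soon as $\frac{t}{2\lambda}\geq\mathrm{Lip}(f)$, i.e. $t\geq 2\lambda\,\mathrm{Lip}(f)$. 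In both cases the far-away points are eliminated by the reduction of the first paragraph, confining the infimum to $B(x,2\sqrt{N\lambda})$ and $B(x,2\lambda\,\mathrm{Lip}(f))$ respectively.

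I do not anticipate any genuine obstacle: the argument is elementary and self-contained, the only mildly delicate points being (i) verifying the sign of the leading coefficient, which is precisely where the restriction $\lambda<\frac{1}{2c}$ enters, and (ii) confirming that the threshold radius is real and finite, which uses $f(x)<+\infty$ together with the minorization. Values $f(y)=+\infty$ cause no difficulty, since they make the candidate value $+\infty$ and so never compete for the infimum; and the parallel statements for the sup-convolution need no separate treatment because $g^{\mu}=-(-g)_{\mu}$.
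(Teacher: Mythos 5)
Your bounded and Lipschitz cases coincide with the paper's own proof: the paper likewise observes that any $y$ with $d(x,y)>2\sqrt{N\lambda}$ (resp.\ $d(x,y)>2\lambda\,\textrm{Lip}(f)$) yields a candidate value $\geq f(x)\geq f_{\lambda}(x)$, so those points cannot compete with $y=x$. For the quadratically minorized case the paper gives no proof at all --- it simply cites \cite[Proposition 2.1]{AF} --- so there your reconstruction supplies an argument the text omits, and its scheme (the minorization combined with $d(y,x_{0})^{2}\leq 2\,d(x,y)^{2}+2\,d(x,x_{0})^{2}$) is the expected one; your side remarks (nonnegativity of the radicand via the minorization at $x$, harmlessness of $f(y)=+\infty$) are also correct.

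There is, however, a genuine quantitative slip in the final reading-off of that case. Solving your own inequality
$$
\Bigl(\frac{1}{2\lambda}-c\Bigr)t^{2}-\frac{c}{2}-c\,d(x,x_{0})^{2}\geq f(x)
$$
gives
$$
t^{2}\geq\frac{2\lambda\bigl(f(x)+\frac{c}{2}+c\,d(x,x_{0})^{2}\bigr)}{1-2\lambda c}=\lambda\,\bar{\rho}^{\,2},
$$
i.e.\ the half-line is $t\geq\sqrt{\lambda}\,\bar{\rho}$, \emph{not} $t\geq\bar{\rho}$ as you assert. For $\lambda\leq 1$ this is harmless (your threshold is then below $\bar{\rho}$), but the hypothesis allows every $\lambda<\frac{1}{2c}$, which contains $\lambda>1$ whenever $c<\frac{1}{2}$, and there your computation does not deliver the stated radius. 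Nor can any argument: with the printed $\bar{\rho}$ the statement is actually false in that regime. On $M=\R$ take $c=\frac{1}{10}$, $x=x_{0}=0$, $\lambda=2.9$, $f(0)=1$, $f\equiv+\infty$ on $0<|y|<2.6$, and $f(y)=-\frac{c}{2}(1+y^{2})$ for $|y|\geq 2.6$; then $\bar{\rho}=\sqrt{5}\approx 2.24$, yet $y=2.6$ gives $f(y)+\frac{1}{2\lambda}y^{2}\approx 0.78<1=\inf_{y\in B(0,2.4)}\{f(y)+\frac{1}{2\lambda}d(0,y)^{2}\}$, so the claimed equality fails for $\rho=2.4>\bar{\rho}$. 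The moral is that the threshold must carry the factor $\sqrt{\lambda}$ --- equivalently, the denominator in $\bar{\rho}$ should read $\frac{1}{\lambda}-2c$ rather than $1-2\lambda c$ --- which is exactly what your inequality produces and is presumably what \cite{AF} contains, the version printed here being a transcription slip. So keep your derivation, but record the radius it actually yields: as written, the phrase ``whose solution is a half-line $t\geq\bar{\rho}$'' is incorrect, and it silently papers over a defect in the statement itself.
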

\begin{proof}
The first part is \cite[Proposition 2.1]{AF}. Let us prove the last two statements.
If $|f|\leq N$ and $d(y,x)>2\sqrt{N\lambda}$ then
$$f(y)+\frac{1}{2\lambda}d(x,y)^{2}>-N+2N=N\geq f(x)\geq
f_{\lambda}(x),$$
hence
$$f_{\lambda}(x)=\inf_{y\in B(x, \sqrt{N\lambda})}\{f(y)+\frac{1}{2\lambda} d(x, y)^{2}\}.
$$
On the other hand, if $f$ is Lipschitz and $d(x,y)>2\lambda\textrm{Lip}(f)$ then we have
$$f(y)+\frac{1}{2\lambda}d(x,y)^{2}\geq f(x)-\textrm{Lip}(f) d(x,y)+\frac{1}{2\lambda}d(x,y)^{2}\geq
f(x)\geq f_{\lambda}(x),$$
hence
$$
f_{\lambda}(x)=\inf_{y\in B(x, 2\lambda\textrm{Lip}(f))}\{f(y)+\frac{1}{2\lambda}
d(x, y)^{2}\}.
$$
\end{proof}

The following two propositions were proved in \cite{AF}.

\begin{proposition}
Let $M$ be a Riemannian manifold,
$f, h:M\to\mathbb{R}\cup\{+\infty\}$. We have that:
\begin{enumerate}
\item $f_{\lambda}\leq f$ for all $\lambda>0$.
\item If $0<\lambda_{1}<\lambda_{2}$ then $f_{\lambda_{2}}\leq
f_{\lambda_{1}}$.
\item $\inf f_{\lambda}= \inf f$ and, moreover, if $f$ is lower
semicontinuous then every minimizer of $f_{\lambda}$ is a
minimizer of $f$, and conversely.
\item If $T$ is an isometry of $M$ onto $M$, and $f$ is invariant
under $T$ (that is, $f(Tz)=f(z)$ for all $z\in M$), then
$f_{\lambda}$ is also invariant under $T$, for all $\lambda>0$.
\item If $f\leq h$ then $f_{\lambda}\leq h_{\lambda}$ for every $\lambda>0$.
\end{enumerate}
\end{proposition}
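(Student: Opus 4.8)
The plan is to read off all five items directly from the definition $f_{\lambda}(x)=\inf_{y\in M}\{f(y)+\frac{1}{2\lambda}d(x,y)^{2}\}$, using nothing beyond elementary monotonicity of the infimum and the defining properties of the Riemannian distance $d$. Four of the five are formal and require no genuine work; the only substantive point is the equivalence of minimizers in item (3), where lower semicontinuity is essential.

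For item (1) I would test the infimum with the admissible choice $y=x$, which gives $f_{\lambda}(x)\leq f(x)+\frac{1}{2\lambda}d(x,x)^{2}=f(x)$. Item (2) follows from the pointwise inequality $f(y)+\frac{1}{2\lambda_{2}}d(x,y)^{2}\leq f(y)+\frac{1}{2\lambda_{1}}d(x,y)^{2}$, valid for every $y$ whenever $0<\lambda_{1}<\lambda_{2}$ (since $d(x,y)^{2}\geq 0$ and $\frac{1}{2\lambda_{2}}<\frac{1}{2\lambda_{1}}$), because the infimum is monotone in the integrand. Item (5) is of exactly the same nature: from $f\leq h$ one has $f(y)+\frac{1}{2\lambda}d(x,y)^{2}\leq h(y)+\frac{1}{2\lambda}d(x,y)^{2}$ for all $y$, and passing to the infimum over $y$ preserves the inequality. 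Item (4) is a reindexing argument: writing $y=Tz$ and using that $T$ is a surjective isometry, so $d(Tx,Tz)=d(x,z)$, together with the invariance $f(Tz)=f(z)$, the infimum defining $f_{\lambda}(Tx)$ is transformed verbatim into the one defining $f_{\lambda}(x)$, giving $f_{\lambda}(Tx)=f_{\lambda}(x)$.

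The equality $\inf f_{\lambda}=\inf f$ in item (3) I would obtain from two inequalities: $\inf f_{\lambda}\leq\inf f$ is immediate from item (1), while the reverse follows from $f(y)+\frac{1}{2\lambda}d(x,y)^{2}\geq f(y)\geq\inf f$ for all $x,y$, whence $f_{\lambda}(x)\geq\inf f$ for every $x$. For the minimizers, the forward implication is easy: if $f(x_{0})=\inf f$ then $\inf f=\inf f_{\lambda}\leq f_{\lambda}(x_{0})\leq f(x_{0})=\inf f$ forces $f_{\lambda}(x_{0})=\inf f_{\lambda}$. The converse is the one delicate point, and the only place lower semicontinuity is used. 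Given a minimizer $x_{0}$ of $f_{\lambda}$, so $f_{\lambda}(x_{0})=\inf f$, I would choose a minimizing sequence $(y_{n})$ with $f(y_{n})+\frac{1}{2\lambda}d(x_{0},y_{n})^{2}\to\inf f$; since $f(y_{n})\geq\inf f$, the quadratic term is squeezed to $0$, forcing $y_{n}\to x_{0}$ and simultaneously $f(y_{n})\to\inf f$. Lower semicontinuity then gives $f(x_{0})\leq\liminf_{n}f(y_{n})=\inf f$, and with $f(x_{0})\geq\inf f$ this shows $x_{0}$ minimizes $f$. I would stress that semicontinuity cannot be dropped here: the minimizing sequence still converges to $x_{0}$, but without it the values $f(y_{n})$ give no control on $f(x_{0})$.
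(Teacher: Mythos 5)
Your proof is correct and complete: items (1), (2), (4), (5) are exactly the one-line monotonicity/reindexing arguments, and the converse minimizer implication in (3) is handled by the standard squeezing argument (the quadratic term in a minimizing sequence for $f_{\lambda}(x_{0})$ is forced to $0$, so $y_{n}\to x_{0}$ and lower semicontinuity gives $f(x_{0})\leq\inf f$), with the tacit and harmless assumption that $\inf f>-\infty$ so that minimizers are meaningful. The paper does not prove this proposition itself but cites \cite{AF}, where the argument is the same elementary one you give, so there is nothing to compare beyond noting agreement.
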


\medskip

\begin{proposition}\label{convergence}
Let $M$ be a Riemannian manifold,
$f:M\to\mathbb{R}\cup\{+\infty\}$ a $c$-quadratically minorized function for some $c>0$.
\begin{enumerate}
\item If $f$ is uniformly continuous and bounded on all
of $M$ then $\lim_{\lambda\to 0}f_{\lambda}=f$ uniformly on $M$.
\item If $f$ is uniformly continuous on bounded subsets
of $M$ then $\lim_{\lambda\to 0}f_{\lambda}=f$ uniformly on each
bounded subset of $M$.
\item In general (that is, under no continuity assumptions on $f$)
we have that $\lim_{\lambda\to 0}f_{\lambda}(x)=f(x)$ for every
$x\in M$ with $f(x)<+\infty$.
\end{enumerate}
\end{proposition}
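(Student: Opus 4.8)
The plan is to handle all three parts through a single mechanism: the trivial upper bound $f_\lambda\le f$ together with a quantitative control on how far a near-minimizer of the defining infimum can lie from the base point. Since $f_\lambda(x)\le f(y)+\frac1{2\lambda}d(x,y)^2$ for every $y$, the choice $y=x$ gives $f_\lambda(x)\le f(x)$ for all $\lambda>0$ and every $x$ with $f(x)<+\infty$; this is half of each assertion. The real work is the matching lower estimate $\liminf_{\lambda\to0}f_\lambda(x)\ge f(x)$, and the point is that $c$-quadratic minorization forces any nearly optimal competitor $y$ to collapse onto $x$ at a rate governed by $\lambda$ alone (and, in part (2), uniformly over a bounded set).

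First I would record the \emph{clustering estimate}. Fix $x$ with $f(x)<+\infty$ and, for each small $\lambda$, pick $y_\lambda$ with $f(y_\lambda)+\frac1{2\lambda}d(x,y_\lambda)^2\le f_\lambda(x)+\lambda\le f(x)+\lambda$. Inserting $f(y_\lambda)\ge-\frac c2(1+d(y_\lambda,x_0)^2)$ and $d(y_\lambda,x_0)^2\le 2d(x,y_\lambda)^2+2d(x,x_0)^2$ yields, for $\lambda<\frac1{4c}$,
$$
\Big(\tfrac1{2\lambda}-c\Big)\,d(x,y_\lambda)^2\le f(x)+\lambda+\tfrac c2+c\,d(x,x_0)^2,
$$
so that $d(x,y_\lambda)^2\le 4\lambda\big(f(x)+\lambda+\tfrac c2+c\,d(x,x_0)^2\big)\to0$ as $\lambda\to0$. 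This is just the quantitative form of the localization already contained in Proposition \ref{localization}. Now from $f_\lambda(x)\ge f(y_\lambda)+\frac1{2\lambda}d(x,y_\lambda)^2-\lambda\ge f(y_\lambda)-\lambda$ and $y_\lambda\to x$, lower semicontinuity of $f$ at $x$ gives $\liminf_{\lambda\to0}f_\lambda(x)\ge\liminf_{\lambda\to0}f(y_\lambda)\ge f(x)$, which with $f_\lambda(x)\le f(x)$ proves the pointwise convergence of part (3). The only place regularity enters is this final liminf, where one invokes lower semicontinuity of $f$ at $x$ (the natural minimal hypothesis for such a statement).

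For part (1) I would instead use the sharper localization valid for bounded $f$: if $|f|\le N$, then by Proposition \ref{localization} the infimum defining $f_\lambda(x)$ may be taken over $B(x,2\sqrt{N\lambda})$, a ball whose radius depends on $\lambda$ but \emph{not} on $x$. Given $\varepsilon>0$, uniform continuity furnishes $\delta>0$ independent of $x$ with $|f(x)-f(y)|<\varepsilon$ whenever $d(x,y)<\delta$; choosing $\lambda$ so small that $2\sqrt{N\lambda}<\delta$ forces $f(y)\ge f(x)-\varepsilon$ for every competitor $y$, whence $f_\lambda(x)\ge f(x)-\varepsilon$ for \emph{all} $x$ at once. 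Together with $f_\lambda\le f$ this gives $\sup_{x\in M}|f(x)-f_\lambda(x)|\le\varepsilon$ for all small $\lambda$, i.e. uniform convergence.

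Part (2) merges the two ideas, and this is where I expect the main (if modest) obstacle. Fix a bounded set $B\subseteq B(x_0,r)$ and enlarge it to a ball $B^\ast$ large enough to contain near-minimizing geodesics between its points. Because a connected Riemannian manifold is a length space, uniform continuity of $f$ on the bounded set $B^\ast$ in fact forces $f$ to be bounded there: chain such a path from $x_0$ into finitely many steps of length $<\delta$, the number of steps being controlled by $r$. Say $|f|\le N$ on $B^\ast$. The clustering estimate above, with $d(x,x_0)\le r$, then holds \emph{uniformly} for $x\in B$: near-minimizers obey $d(x,y_\lambda)\le\eta(\lambda)$ with $\eta(\lambda)\to0$ depending only on $N$, $c$, $r$. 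Once $\lambda$ is small enough that $\eta(\lambda)<1$, every relevant $y_\lambda$ lies in $B^\ast$, where uniform continuity applies exactly as in part (1), giving $f_\lambda(x)\ge f(x)-\varepsilon-\lambda$ uniformly in $x\in B$, and the argument closes as before. The two delicate points are precisely the boundedness-on-bounded-sets step and the verification that the clustering rate $\eta(\lambda)$ is genuinely independent of $x\in B$; both follow from the global quadratic minorant and the fixed radius $r$.
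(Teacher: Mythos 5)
Your proposal is correct, and it should be said up front that the paper itself contains no proof of Proposition \ref{convergence}: it is quoted with the remark that it was proved in \cite{AF}. Measured against the standard argument (which is the one in \cite{AF}, built on the same localization statement reproduced here as Proposition \ref{localization}), your route is essentially that argument: the trivial bound $f_\lambda\le f$ from taking $y=x$, the clustering estimate $d(x,y_\lambda)^2\le 4\lambda\bigl(f(x)+\lambda+\tfrac c2+c\,d(x,x_0)^2\bigr)$ for near-minimizers (valid for $\lambda<\tfrac1{4c}$, exactly as you derive it), and then the modulus of continuity of $f$ on the relevant ball. Your part (1), using the $x$-independent radius $2\sqrt{N\lambda}$ from Proposition \ref{localization}, is exactly right, and in part (2) you correctly identified and handled the two genuine pressure points: that uniform continuity on bounded sets forces boundedness on bounded sets (your chaining argument works because the manifold is a length space, with the chain staying inside a slightly enlarged bounded set where uniform continuity still applies), and that the clustering radius $\eta(\lambda)^2=4\lambda(N+\lambda+\tfrac c2+cr^2)$ is uniform over $x\in B$ because $d(x,x_0)\le r$ and $f\le N$ there.

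One point deserves emphasis, and it is to your credit rather than a defect. In part (3) you invoke lower semicontinuity of $f$ at $x$, which is not among the stated hypotheses (the statement says ``under no continuity assumptions on $f$''). This extra hypothesis is in fact unavoidable: as literally stated, part (3) is false. Take $M=\R$, $f(y)=0$ for $y\neq 0$ and $f(0)=1$; then $f$ is bounded below, hence quadratically minorized, and $f_\lambda(0)=\inf_{y\neq 0}\frac{y^2}{2\lambda}=0$ for every $\lambda>0$, so $\lim_{\lambda\to 0}f_\lambda(0)=0\neq 1=f(0)$. In general one has $\lim_{\lambda\to 0^+}f_\lambda(x)=\min\{f(x),\liminf_{y\to x}f(y)\}$, the lower semicontinuous envelope of $f$ at $x$ (your clustering estimate gives the lower bound, and testing with $y$ close to $x$ gives the upper bound), so lower semicontinuity at $x$ is exactly the condition under which the claimed equality holds; note that the preceding proposition in the paper does state lower semicontinuity explicitly where it is needed for minimizers. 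So your proof is the correct proof of the corrected statement; just record ``$f$ lower semicontinuous at $x$'' as a hypothesis in part (3).
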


When $M=\R^n$ or a Hilbert space, it is a well known fact (and easy to prove) that
the operation  $f\mapsto f_{\lambda}$ preserves global or local Lipschitz and
convexity properties of $f$. In the Riemannian setting one has to impose curvature
restrictions on $M$ in order to obtain similar results, see \cite{AF},
and the proofs are somewhat subtler.

In order to see that Lipschitz constants of $f$ are almost preserved by passing to
the regularizations $f_{\lambda}$ or $f^{\mu}$, we will use the following.
\begin{lemma}
Let $M$ be a Riemannian manifold such that $i(M)>0$, $c(M)>0$. Assume that the sectional
curvature $K$ of $M$ is bounded below by $-K_0$ for some $K_0>0$. Then, for every $\varepsilon>0$
there exists $r>0$ such that
$$
d\left(\exp_{x}\left( L_{zx}(w)\right), \exp_{z}(w)\right)\leq (1+\varepsilon) d(x,z)
$$
for every $x,z\in M$ with $d(x,z)\leq r$ and every $w\in T_zM$ with $\|w\|\leq r$.
\end{lemma}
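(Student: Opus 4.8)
The plan is to join the two points $\exp_z(w)$ and $\exp_x(L_{zx}(w))$ by an explicit curve whose length I can control, and to bound that length by a Jacobi field (Rauch) comparison. Set $\ell=d(x,z)$ and let $c:[0,\ell]\to M$ be the unique unit-speed minimizing geodesic with $c(0)=z$, $c(\ell)=x$; this exists and is unique as soon as $r<\min\{i(M),c(M)\}$. Let $W$ be the parallel field along $c$ with $W(0)=w$, so $W(\ell)=L_{zx}(w)$ and $\|W(s)\|=\|w\|$ for all $s$. First I would consider the geodesic variation
$$
\Gamma(s,t)=\exp_{c(s)}\big(t\,W(s)\big),\qquad s\in[0,\ell],\ t\in[0,1],
$$
in which each $t\mapsto\Gamma(s,t)$ is a geodesic of speed $\|w\|$, with $\Gamma(0,1)=\exp_z(w)$ and $\Gamma(\ell,1)=\exp_x(L_{zx}(w))$. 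Then $s\mapsto\Gamma(s,1)$ joins the two points and
$$
d\big(\exp_z(w),\exp_x(L_{zx}(w))\big)\le\int_0^\ell\Big\|\tfrac{\partial\Gamma}{\partial s}(s,1)\Big\|\,ds .
$$

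Next I would recognize $J_s(t):=\frac{\partial\Gamma}{\partial s}(s,t)$, for each fixed $s$, as a Jacobi field along the geodesic $t\mapsto\Gamma(s,t)$. Its initial data are $J_s(0)=c'(s)$, whence $\|J_s(0)\|=1$, and, using the symmetry $\frac{D}{\partial t}\frac{\partial\Gamma}{\partial s}=\frac{D}{\partial s}\frac{\partial\Gamma}{\partial t}$ together with the fact that $\frac{\partial\Gamma}{\partial t}(s,0)=W(s)$ is parallel along $c$, one gets $\frac{D}{\partial t}J_s(0)=\frac{D}{\partial s}W(s)=0$. Thus $J_s$ is a Jacobi field with vanishing initial covariant derivative and $\|J_s(0)\|=1$, along a geodesic of speed $\|w\|\le r$.

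The heart of the argument is then a Rauch-type comparison. Since $K\ge -K_0$ and the geodesic $t\mapsto\Gamma(s,t)$ has length $\|w\|\le r<i(M)$ (so there are no conjugate points for $t\in[0,1]$), the growth of a Jacobi field with $J_s'(0)=0$ is dominated by that in the space form of constant curvature $-K_0$. Splitting $J_s$ into its tangential part, which is parallel and hence of constant norm $\le 1$ because $J_s'(0)=0$, and its normal part, to which the comparison applies, I obtain
$$
\|J_s(1)\|\le \cosh\!\big(\sqrt{K_0}\,\|w\|\big)\,\|J_s(0)\|=\cosh\!\big(\sqrt{K_0}\,\|w\|\big).
$$
Inserting this into the length integral gives
$$
d\big(\exp_z(w),\exp_x(L_{zx}(w))\big)\le \ell\,\cosh\!\big(\sqrt{K_0}\,\|w\|\big)=d(x,z)\,\cosh\!\big(\sqrt{K_0}\,\|w\|\big).
$$
To finish, given $\varepsilon>0$ I would fix $r\in\big(0,\min\{i(M),c(M)\}\big)$ small enough that $\cosh(\sqrt{K_0}\,r)\le 1+\varepsilon$; since $\|w\|\le r$, the claimed inequality follows.

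The main obstacle I anticipate is the Jacobi field comparison under the initial condition $J'(0)=0$ rather than the more familiar $J(0)=0$: the textbook statements of the Rauch theorem assume a Jacobi field vanishing at the origin, so I would need the companion version adapted to vanishing initial derivative (equivalently, a Riccati comparison for $t\mapsto\|J(t)\|$, or an index-form comparison), and to verify it is available in the infinite-dimensional setting as well, relying on the references to Klingenberg and Lang. A secondary point to check carefully is that the choice $r<i(M)$ genuinely excludes conjugate points along the geodesics $t\mapsto\Gamma(s,t)$ for $t\in[0,1]$, which is precisely what legitimizes the comparison and the length bound.
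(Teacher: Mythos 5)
Your construction is sound and, in substance, it is the proof of the very result the paper quotes rather than a genuinely new argument: the paper disposes of this lemma by citing \cite[Corollary 1.31]{CheegerEbin} to reduce everything to the hyperbolic plane of constant curvature $-K_0$, and then settles the constant-curvature case by a Gronwall estimate in an exponential chart (sketched as an exercise), globalized via isometries between balls. The proof of that Cheeger--Ebin corollary is exactly your variation $\Gamma(s,t)=\exp_{c(s)}\left(tW(s)\right)$ together with the Berger (second Rauch) comparison for Jacobi fields with vanishing initial covariant derivative; by computing the model field explicitly you obtain the quantitative bound $d\left(\exp_x(L_{zx}w),\exp_z(w)\right)\leq\cosh\left(\sqrt{K_0}\,\|w\|\right)d(x,z)$, which is sharper than the qualitative $(1+\varepsilon)$ of the statement and makes the Gronwall and globalization steps unnecessary. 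Your verification of the initial data $J_s(0)=c'(s)$, $\frac{D}{\partial t}J_s(0)=\frac{D}{\partial s}W(s)=0$, and your tangential/normal splitting (the tangential part of $J_s$ is parallel of constant norm precisely because $\langle J_s'(0),\frac{\partial\Gamma}{\partial t}(s,0)\rangle=0$, so the linear-in-$t$ coefficient vanishes) are all correct, and the length estimate $d\leq\int_0^\ell\|J_s(1)\|\,ds$ is legitimate since $\|w\|\leq r<i(M)$ guarantees $\Gamma$ is defined without assuming completeness.

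The one point you flagged as needing care is indeed the delicate one, but you are checking the wrong condition. Since $K\geq -K_0$, the manifold $M$ plays the role of the \emph{larger}-curvature space in the comparison, and it is on the larger-curvature space that Rauch-type theorems impose their nondegeneracy hypothesis; for the Berger version with $J'(0)=0$ that hypothesis is the absence of \emph{focal} points (zeros of Jacobi fields with vanishing initial covariant derivative) along $t\mapsto\Gamma(s,t)$, not conjugate points. The choice $r<i(M)$ does exclude conjugate points (the conjugate radius dominates $i(M)$), but since the lemma assumes no upper curvature bound it does not by itself exclude focal points, and the estimate $\|J_s(1)\|\leq\cosh\left(\sqrt{K_0}\,\|w\|\right)$ genuinely fails beyond a focal point: a large concentrated positive-curvature region can drive $J_s$ through zero with large covariant derivative, after which the lower bound $-K_0$ permits growth far exceeding the $\cosh$ bound. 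So you must either add an argument bounding the focal radius of short geodesics below in terms of $i(M)$ and $K_0$ and shrink $r$ accordingly, or simply invoke \cite[Theorem 1.29 and Corollary 1.31]{CheegerEbin} as a black box, whose statements package exactly this fine print --- which is what the paper does. With that repair (and the remark, which the paper also makes, that the index-form proofs of these comparison theorems carry over to infinite dimensions as in \cite{Lang}), your proof is complete and in fact more informative than the paper's.
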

\begin{proof}
By \cite[Corollary 1.31]{CheegerEbin}, it suffices to prove the Lemma in the case when
$M$ is a hyperbolic plane of constant curvature $-K_0<0$ (note that Corollary 1.31 of
\cite{CheegerEbin} is true for infinite-dimensional manifolds as well, since its proof
only relies on the Rauch comparison theorem, which remains true in the infinite dimensional
setting, see \cite{Lang}). But in the two-dimensional case of constant negative curvature,
the Lemma is an exercise which can be solved first locally, by considering an exponential
chart $\exp_{z}$ and applying Gronwall's inequality to the corresponding local expression of the geodesic flow, and
then globally, by using the fact that, for any two given balls of the same radius in a simply connected space of constant curvature, there always exists an isometry mapping one ball onto the other one.
\end{proof}

\begin{proposition}
Let $M$ be a Riemannian manifold such that $i(M)>0$, $c(M)>0$. Assume that the sectional
curvature $K$ of $M$ is bounded below by $-K_0$ for some $K_0>0$. Let
$f:M\to\mathbb{R}$ be a Lipschitz function. Then the functions
$f_{\lambda}$ are also Lipschitz, and
$$
\lim_{\lambda\to 0^{+}}\textrm{Lip}(f_{\lambda})=\textrm{Lip}(f).
$$
\end{proposition}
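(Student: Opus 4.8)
The plan is to prove the two inequalities $\limsup_{\lambda\to 0^+}\textrm{Lip}(f_\lambda)\le\textrm{Lip}(f)$ and $\liminf_{\lambda\to 0^+}\textrm{Lip}(f_\lambda)\ge\textrm{Lip}(f)$, after first checking that every $f_\lambda$ is Lipschitz by a soft argument. Write $L=\textrm{Lip}(f)$ and assume $L>0$ (if $L=0$ then $f$ is constant and $f_\lambda=f$). For the crude Lipschitzness, fix $x,z\in M$ and a near-minimizer $y$ for $f_\lambda(z)$; by the last assertion of Proposition \ref{localization} we may take $d(z,y)<2\lambda L$. Using $y$ as a competitor for $f_\lambda(x)$ and the estimate $d(x,y)^2\le d(z,y)^2+2d(z,y)d(x,z)+d(x,z)^2$, one gets $f_\lambda(x)-f_\lambda(z)\le\frac{1}{2\lambda}\big(d(x,y)^2-d(z,y)^2\big)+\delta\le 2L\,d(x,z)+\frac{1}{2\lambda}d(x,z)^2+\delta$ for every slack $\delta>0$. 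Thus, for $d(x,z)\le\eta$, $f_\lambda$ satisfies a local Lipschitz bound with constant $2L+\eta/2\lambda$; chaining this along near length-minimizing paths subdivided into arcs of length $\le\eta$, and then letting $\eta\to 0$, yields $\textrm{Lip}(f_\lambda)\le 2L$ for \emph{every} $\lambda>0$. In particular each $f_\lambda$ is Lipschitz, and this step uses no curvature hypothesis.

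To upgrade the constant $2L$ to the sharp value as $\lambda\to 0^+$, I would replace the triangle inequality by a Riemannian analogue of translation. Given $\varepsilon>0$, let $r>0$ be the radius furnished by the Lemma above, and take $\lambda$ so small that $2\lambda L\le r$. With $y$ a near-minimizer for $f_\lambda(z)$ as before, set $w=\exp_z^{-1}(y)$ (well defined since $\|w\|=d(z,y)<2\lambda L\le r$) and $y'=\exp_x\!\big(L_{zx}w\big)$. Since $L_{zx}$ is a linear isometry and $\exp_x$ is radially distance-preserving, $d(x,y')=\|L_{zx}w\|=\|w\|=d(z,y)$, so using $y'$ as a competitor for $f_\lambda(x)$ gives $f_\lambda(x)\le f(y')+\frac{1}{2\lambda}d(z,y)^2$. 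Now $f(y')\le f(y)+L\,d(y',y)$, and the Lemma above controls the distortion: $d(y',y)=d\big(\exp_x(L_{zx}w),\exp_z(w)\big)\le(1+\varepsilon)d(x,z)$ whenever $d(x,z)\le r$. Combining these and letting $\delta\to 0$ we obtain $f_\lambda(x)-f_\lambda(z)\le(1+\varepsilon)L\,d(x,z)$ for $d(x,z)\le r$; by symmetry in $x,z$ and the same chaining over arcs of length $\le r$, this globalizes to $\textrm{Lip}(f_\lambda)\le(1+\varepsilon)L$ for all sufficiently small $\lambda$. Letting $\varepsilon\to 0$ gives $\limsup_{\lambda\to 0^+}\textrm{Lip}(f_\lambda)\le L$.

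For the reverse inequality I would invoke convergence. A Lipschitz function is uniformly continuous and quadratically minorized, so by Proposition \ref{convergence} the functions $f_\lambda$ converge to $f$ (pointwise suffices here). The Lipschitz seminorm is lower semicontinuous under pointwise convergence: for fixed $x\ne z$ one has $|f(x)-f(z)|=\lim_{\lambda\to 0^+}|f_\lambda(x)-f_\lambda(z)|\le\liminf_{\lambda\to 0^+}\textrm{Lip}(f_\lambda)\,d(x,z)$, and taking the supremum over $x\ne z$ gives $L=\textrm{Lip}(f)\le\liminf_{\lambda\to 0^+}\textrm{Lip}(f_\lambda)$. Together with the previous paragraph this establishes $\lim_{\lambda\to 0^+}\textrm{Lip}(f_\lambda)=\textrm{Lip}(f)$.

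The only genuinely geometric step, and the expected main obstacle, is the sharp upper bound: in a curved manifold the naive ``translation'' $y\mapsto y'$ distorts distances, so one cannot argue as in the flat case where this map is an isometry. The whole point is that the distortion $d(y',y)$ is controlled uniformly by $(1+\varepsilon)d(x,z)$ through the Lemma above, which is exactly where the lower curvature bound $K\ge -K_0$ and the hypotheses $i(M),c(M)>0$ enter. The crude $2L$ bound and the lower bound are soft and curvature-free; the curvature assumptions serve solely to drive the distortion constant $(1+\varepsilon)$ down to $1$ as $\lambda\to 0^+$.
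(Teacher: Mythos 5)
Your proposal is correct and takes essentially the same route as the paper: the sharp upper bound is obtained, exactly as in the paper's proof, by localizing the infimum to $d(z,y)<2\lambda\,\textrm{Lip}(f)$ (Proposition \ref{localization}), transporting the near-minimizing direction $w$ via $L_{zx}$, and controlling the distortion $d\bigl(\exp_x(L_{zx}w),\exp_z(w)\bigr)\leq(1+\varepsilon)\,d(x,z)$ through the same Lemma, while the lower bound follows from pointwise convergence $f_\lambda\to f$ and lower semicontinuity of the Lipschitz seminorm, again as in the paper. Your only addition is the preliminary curvature-free bound $\textrm{Lip}(f_\lambda)\leq 2\,\textrm{Lip}(f)$ valid for every $\lambda>0$, which makes Lipschitzness of $f_\lambda$ explicit also for large $\lambda$, a point the paper's proof (which only treats $\lambda\in(0,r/2\textrm{Lip}(f))$) leaves implicit.
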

\begin{proof}
We may assume $\textrm{Lip}(f)>0$ (as $f_{\lambda}$ is constant whenever $f$ is constant).
Given $\varepsilon>0$, let $r=r(\varepsilon, K_0)>0$ be as in the statement of the preceding Lemma. Using Proposition
\ref{localization}, we have that, for $\lambda\in \left(0, r/2\textrm{Lip}(f)\right)$, the
infimum defining $f_{\lambda}(x)$ can be restricted to the ball $B(x, r)$. Then we can write
$$
f_{\lambda}(x)=\inf_{v\in B_{T_xM}(0,r)}\{f(\exp_{x}(v))+\frac{1}{2\lambda}\|v\|^{2}\}
$$
for every $x\in M$ and $\lambda\in \left(0, r/2\textrm{Lip}(f)\right)$. Given $x,z\in M$ with $d(x,z)<r$, for every $\delta>0$ we can find
$w_{\delta,z}\in B_{T_zM}(0,r)$ such that
$$
f(\exp_{z}(w_{\delta,z}))+\frac{1}{2\lambda}\|w_{\delta,z}\|^{2}\leq f_{\lambda}(z)+\delta,
$$ and therefore
\begin{eqnarray*}
& & f_{\lambda}(x)-f_{\lambda}(z)\leq \\
& &f\left(\exp_{x}(L_{zx}(w_{\delta,z}))\right)+
\frac{1}{2\lambda}\|L_{zx}(w_{\delta,z})\|^{2}-f\left(\exp_{z}(w_{\delta,z})\right)
-\frac{1}{2\lambda}\|w_{\delta,z}\|^{2}+\delta=\\
& & f\left(\exp_{x}(L_{zx}(w_{\delta,z}))\right)-f\left(\exp_{z}(w_{\delta,z})\right)+
\delta\leq\\
& &\textrm{Lip}(f)d\left(\exp_{x}\left( L_{zx}(w_{\delta,z})\right), \exp_{z}(w_{\delta,z})\right)\leq
\textrm{Lip}(f)(1+\varepsilon) d(x,z) +\delta.
\end{eqnarray*}
By letting $\delta\to 0^{+}$ we obtain
$$
f_{\lambda}(x)-f_{\lambda}(z)\leq\textrm{Lip}(f)(1+\varepsilon) d(x,z),
$$
and by interchanging the roles of $z$ and $x$ we deduce that
$$
|f_{\lambda}(x)-f_{\lambda}(z)|\leq\textrm{Lip}(f)(1+\varepsilon) d(x,z)
$$
for every $x,z\in M$ with $d(x,z)<r$. This shows that $f_{\lambda}$ is locally
$(1+\varepsilon)\textrm{Lip}(f)$-Lipschitz for every
$\lambda\in \left(0, r/2\textrm{Lip}(f)\right)$, and because $M$ is a Riemannian manifold it follows
that $f_{\lambda}$ is globally $(1+\varepsilon)\textrm{Lip}(f)$-Lipschitz for
$\lambda\in \left(0, r/2\textrm{Lip}(f)\right)$. Hence we have $\limsup_{\lambda\to 0^{+}}\textrm{Lip}(f_{\lambda})\leq\textrm{Lip}(f)$. On the other hand, since $\lim_{\lambda\to 0^{+}}f_{\lambda}(x)=f(x)$ for every $x$, it is immediately checked that $\textrm{Lip}(f)\leq \liminf_{\lambda\to 0^{+}}\textrm{Lip}(f_{\lambda})$.
\end{proof}
An analogous result for locally Lipschitz functions $f$ easily follows from the
preceding Proposition and Proposition \ref{localization}.
We let the reader write the corresponding statement.

Now let us state some results from \cite{AF} concerning convexity properties
of $f_{\lambda}$. The following Lemma will be useful in the proof of Theorem \ref{main theorem}.

\begin{lemma}\label{the partial inf of a jointly convex is convex}
Let $M$ be a Riemannian manifold, and $F:M\times
M\to\mathbb{R}\cup\{+\infty\}$ a convex function (where $M\times
M$ is endowed with its natural product Riemannian metric). Assume
either that $M$ has the property that every two points can be
connected by a geodesic in $M$, or else that $F$ is continuous and
$M$ is complete. Then, the function $\psi:M\to\mathbb{R}$ defined
by
$$\psi(x)=\inf_{y\in M}F(x,y)$$ is also convex. Similarly, if $G:M\times
M\to\mathbb{R}\cup\{-\infty\}$ is a concave function (and under the same assumptions on
$M$ or on continuity of $G$) then the function
$$M\ni x\mapsto\phi(x)=\sup_{y\in M}G(x,y)$$ is concave.
\end{lemma}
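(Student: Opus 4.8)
The plan is to reduce the convexity of $\psi$ to a one-dimensional statement along geodesics, and then to transport a near-optimal choice of the second variable along a geodesic as well, so that the joint convexity of $F$ can be applied along a single geodesic of the product $M\times M$. First I would recall that $\psi$ is convex on $M$ exactly when $t\mapsto\psi(\gamma(t))$ is convex for every geodesic $\gamma$ of $M$, and that a curve $t\mapsto(\gamma(t),\eta(t))$ is a geodesic of $M\times M$ (with the product metric) if and only if both $\gamma$ and $\eta$ are geodesics of $M$ traversed with the common parameter $t$. Fix a geodesic $\gamma$, two parameters $t_0<t_1$ in its domain, and $s\in(0,1)$, and write $t_s=(1-s)t_0+st_1$. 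We may assume $\psi(\gamma(t_0))$ and $\psi(\gamma(t_1))$ are finite, since otherwise the desired inequality is trivial. Given $\varepsilon>0$, I would pick $y_0,y_1\in M$ with $F(\gamma(t_i),y_i)\le\psi(\gamma(t_i))+\varepsilon$ for $i=0,1$.

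The heart of the argument is to join $y_0$ and $y_1$ by a geodesic $\eta$, reparametrized affinely so that $\eta(t_0)=y_0$ and $\eta(t_1)=y_1$ (an affine reparametrization of a geodesic is again a geodesic). Then $\sigma(t)=(\gamma(t),\eta(t))$ is a geodesic of $M\times M$ on $[t_0,t_1]$, so the convexity of $F$ yields
\[
F(\gamma(t_s),\eta(t_s))\le(1-s)\,F(\gamma(t_0),y_0)+s\,F(\gamma(t_1),y_1).
\]
Since $\psi(\gamma(t_s))\le F(\gamma(t_s),\eta(t_s))$, combining this with the choice of $y_0,y_1$ gives $\psi(\gamma(t_s))\le(1-s)\psi(\gamma(t_0))+s\psi(\gamma(t_1))+\varepsilon$, and letting $\varepsilon\to0^{+}$ yields convexity of $\psi\circ\gamma$. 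Under the first hypothesis, that any two points of $M$ are joined by a geodesic, the connecting geodesic $\eta$ is available immediately and the proof is complete.

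The main obstacle is the second hypothesis, in which $M$ is only assumed complete, so that (in the infinite-dimensional case) $y_0$ and $y_1$ need not be joined by any geodesic at all. Here I would argue by approximation, replacing $(y_0,y_1)$ by nearby pairs that \emph{are} joined by a geodesic. Concretely, I expect to prove, or to extract from the completeness of $M$, that the set of pairs $(a,b)\in M\times M$ that can be joined by a geodesic is dense in $M\times M$; granting this, the inequality $\psi(\gamma(t_s))\le(1-s)F(\gamma(t_0),a)+sF(\gamma(t_1),b)$ holds for every such connectable pair, and taking the infimum over connectable pairs and using the continuity of $F$ (to identify that infimum with $(1-s)\psi(\gamma(t_0))+s\psi(\gamma(t_1))$) finishes the argument. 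Establishing this density is the delicate point and the step I expect to cost the most: it is where completeness is genuinely needed, presumably through the fact that metric completeness forces geodesics to extend indefinitely, combined with the local connectivity supplied by normal neighborhoods and a limiting (or Ekeland-type variational) procedure. This is also the reason the hypothesis pairs completeness with \emph{continuity} of $F$, which is exactly what allows passage to the limit once the $y$-variables are only approximately optimal and only approximately connectable.

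Finally, the statement for $\phi(x)=\sup_{y}G(x,y)$ with $G$ concave follows formally by applying the already proven assertion to the convex function $F=-G:M\times M\to\R\cup\{+\infty\}$: indeed $-\phi(x)=\inf_{y}(-G)(x,y)$ is then convex, so $\phi$ is concave, and both hypotheses transfer verbatim from $G$ to $F=-G$ (continuity of $G$ being the same as continuity of $-G$).
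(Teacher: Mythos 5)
Your proof is correct and follows essentially the same route as the proof this paper imports from \cite{AF}: reduction to convexity along geodesics, $\varepsilon$-optimal choices $y_0,y_1$, the product geodesic $t\mapsto(\gamma(t),\eta(t))$ with $\eta$ affinely reparametrized, and, in the complete case, approximation of $(y_0,y_1)$ by geodesically connectable pairs combined with the continuity of $F$. The one ingredient you flag but leave unproved --- density in $M\times M$ of pairs joinable by a (minimizing) geodesic --- is precisely Ekeland's infinite-dimensional version of the Hopf--Rinow theorem (I.~Ekeland, \emph{The Hopf--Rinow theorem in infinite dimension}, J. Differential Geom. 13 (1978), 287--301), which is exactly the tool the cited proof invokes at this point (in finite dimensions Hopf--Rinow makes that case immediate), so citing it rather than reproving it via your suggested variational procedure closes the argument.
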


\medskip

\begin{definition}\label{definition of uniformly locally convex near the diagonal}
{\em Let $M$ be a Riemannian manifold. We say that the distance
function $d:M\times M\to\mathbb{R}$ is uniformly locally convex on
bounded sets near the diagonal if, for every bounded subset $B$ of
$M$ there exists $r>0$ such that $d$ is convex on $B(x,r)\times
B(x,r)$, and the set $B(x,r)$ is convex in $M$, for all $x\in
B)$.}
\end{definition}
Every complete finite-dimensional Riemannian
manifold of nonpositive sectional curvature satisfies this condition,
as we indicated in \cite{AF}. We conclude this section with the following Proposition from \cite{AF}.

\begin{proposition}\label{convexity preservation}
Let $M$ be a Riemannian manifold with the property that any two
points of $M$ can be joined by a minimizing geodesic, and let
$f:M\to\mathbb{R}\cup\{+\infty\}$ be a lower-semicontinuous convex
function.
\begin{enumerate}
\item  Assume that $f$ is bounded on
bounded sets and that the distance function $d:M\times
M\to\mathbb{R}$ is uniformly locally convex on bounded sets near
the diagonal. Then, for every bounded subset $B$ of $M$ there
exists $\lambda_{0}>0$ such that $f_{\lambda}$ is convex on $B$
for all $\lambda\in (0,\lambda_{0})$.

\item Assume that the distance function $d:M\times
M\to\mathbb{R}$ is convex on all of $M\times M$. Then $f_{\lambda}$
is convex on $M$ for every
$\lambda>0$.
\end{enumerate}
Finally, if one assumes that $f$ is continuous and $M$ is complete, it is
not necessary to require that every two points of $M$ can be
connected by a minimizing geodesic in $M$ in order that the above statements hold true.
\end{proposition}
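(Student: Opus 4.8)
The plan is to reduce both parts to Lemma~\ref{the partial inf of a jointly convex is convex}, which says that a partial infimum of a jointly convex function is convex, applied to
$F(x,y)=f(y)+\frac{1}{2\lambda}d(x,y)^{2}$, whose infimum over $y$ is precisely $f_{\lambda}$. For Part (2) I would first check that $F$ is convex on the product manifold $M\times M$. Along any geodesic $t\mapsto(\alpha(t),\beta(t))$ of $M\times M$ (these are exactly the products of geodesics of $M$), the term $f(\beta(t))$ is convex because $f$ is convex, and $d(\alpha(t),\beta(t))^{2}$ is convex because $d$ is convex on $M\times M$ by hypothesis while $s\mapsto s^{2}$ is convex and nondecreasing on $[0,\infty)$; hence the nonnegative combination $F$ is convex. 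Applying Lemma~\ref{the partial inf of a jointly convex is convex} (under the assumption that any two points are joined by a geodesic, or, for the ``finally'' clause, under the alternative assumption that $F$ is continuous and $M$ is complete) shows that $f_{\lambda}=\inf_{y}F(\cdot,y)$ is convex on $M$.

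For Part (1) this global argument is unavailable, since $d$, and hence $d^{2}$, is only convex on small product balls near the diagonal, so I would localize. The key observation is that convexity of $f_{\lambda}$ along a geodesic is a local property: it suffices to verify the midpoint inequality for pairs of nearby points and then invoke continuity of $f_{\lambda}$ (which follows since a convex function bounded on bounded sets is locally Lipschitz, a property inherited by $f_{\lambda}$) to upgrade local midpoint convexity to convexity on $B$. So I fix the bounded set $B$, enlarge it to a bounded set $B'$ on which $f\le N$, and use Proposition~\ref{localization} to produce $\lambda_{0}>0$ such that for all $x\in B$ and $\lambda\in(0,\lambda_{0})$ the near-minimizers $y$ of $f_{\lambda}(x)$ lie within an arbitrarily prescribed distance of $x$. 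Shrinking $\lambda_{0}$, I arrange this distance to be less than $r/4$, where $r>0$ is the radius furnished for $B'$ by the hypothesis that $d$ is uniformly locally convex on bounded sets near the diagonal (so that $d$ is convex on $B(w,r)\times B(w,r)$ and $B(w,r)$ is convex for every $w\in B'$).

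It then remains to run the standard midpoint estimate locally. Given $x_{0},x_{1}\in B$ close enough together, with midpoint $x_{1/2}$ on the minimizing geodesic joining them, I pick $\varepsilon$-almost minimizers $y_{0},y_{1}$ with $d(x_{i},y_{i})<r/4$ and let $y_{1/2}$ be the midpoint of the geodesic from $y_{0}$ to $y_{1}$. All of $x_{0},x_{1},y_{0},y_{1}$, and hence, by convexity of the ball, the midpoints $x_{1/2},y_{1/2}$, lie in one convex ball $B(w,r)$ on which $d^{2}$ is convex; the product geodesic therefore stays in $B(w,r)\times B(w,r)$, and convexity of $f$ together with convexity of $d^{2}$ there give
\[
f_{\lambda}(x_{1/2})\le f(y_{1/2})+\tfrac{1}{2\lambda}d(x_{1/2},y_{1/2})^{2}\le\tfrac12\bigl(f_{\lambda}(x_{0})+f_{\lambda}(x_{1})\bigr)+\varepsilon .
\]
Letting $\varepsilon\to0$ yields local midpoint convexity, which completes Part (1).

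I expect the main obstacle to be exactly this localization bookkeeping: one must guarantee, uniformly in $x\in B$ and in the nearby pair $x_{0},x_{1}$, that all near-minimizers and all relevant midpoints fall inside a single ball on which $d^{2}$ is convex. This is what forces both the passage to the larger bounded set $B'$ and the coordinated choice of $r$ and $\lambda_{0}$ coming from Proposition~\ref{localization} and the hypothesis on $d$. The ``finally'' clause is handled as in Part (2), by replacing the minimizing-geodesic assumption with the continuity-plus-completeness alternative in Lemma~\ref{the partial inf of a jointly convex is convex}, while the geodesics joining the nearby points $x_{0},x_{1}$ and the near-minimizers $y_{0},y_{1}$ exist regardless, since these points lie in convex balls.
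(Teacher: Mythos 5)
Your proposal is correct and takes essentially the same route as the paper's proof (which the paper defers to \cite{AF}): part (2) is exactly the observation that $F(x,y)=f(y)+\frac{1}{2\lambda}d(x,y)^2$ is jointly convex followed by Lemma \ref{the partial inf of a jointly convex is convex}, and part (1) localizes the infimum via Proposition \ref{localization} so that everything happens in a single product ball $B(w,r)\times B(w,r)$ furnished by the uniform local convexity of $d$, upgrading local convexity of $f_\lambda$ along geodesics to convexity on $B$. Your only deviation is cosmetic: on the convex ball you unroll the midpoint inequality by hand (using convexity of $f$ and of $d^2$ there) rather than applying the partial-inf lemma locally to the convex ball viewed as a manifold in which any two points are joined by a minimizing geodesic, which is the same computation.
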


In particular we see that if $M$ is a Cartan-Hadamard manifold and $f:M\to\R$ is convex then
the functions $f_{\lambda}$ are convex. Under the assumptions of Theorem \ref{main theorem}
it is not difficult to see that then $f_{\lambda}=(f_{\lambda})^{\mu}$ are locally $C^{1,1}$.
This provides a useful regularization method for (not necessarily strongly) convex functions
on such manifolds. See \cite{GreeneWu, A} for more background on such topics.

\medskip

\section{A key convexity lemma}

\begin{lemma}\label{convexity lemma}
Let $M$ be a Riemannian manifold with sectional curvature $K$ such that $-K_0\leq K\leq K_0$
for some $K_0>0$. Assume also that $i(M)>0$ and $c(M)>0$. Let $q>1$. Then:
\begin{enumerate}
\item There exists $R=R(K_0, q)>0$ such that for every $C\geq 0$, for every $A\geq 2 C$ and $B\geq qA$, and for every $x_0\in M$ and $y_0\in B(x_0, R)$, the function
$$
\varphi(x,y):=A d(x,y)^2+B d(x, x_0)^{2}-C d(y, y_0)^{2}
$$ is convex on $B(x_0, R)\times B(x_0, R)$.
\item There also exists $R'=R'(K_0, q)>0$ such that for every $C>0$ and $B\geq qC$, and for every $x_0, y_0, z_0\in M$ with $z_0, y_0\in B(x_0, R')$, the function $$\phi(x):=B d(x, z_0)^{2}-C d(x, y_0)^{2}$$ is convex on $B(x_0, R')$.
\end{enumerate}
\end{lemma}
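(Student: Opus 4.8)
The plan is to prove both statements by reducing convexity to positivity of a Hessian. First I would shrink $R,R'$ below a fixed fraction of $\min\{i(M),c(M)\}$, so that all squared-distance functions appearing are of class $C^2$ and the balls $B(x_0,R)\subseteq M$ and $B(x_0,R)\times B(x_0,R)\subseteq M\times M$ (with the product metric) are geodesically convex; these geometric constraints are harmless since $i(M),c(M)>0$, and the decisive upper bound on the radius will come from the curvature. A $C^2$ function on a convex set is convex if and only if its Hessian is positive semidefinite there, so it suffices to estimate Hessians of the three squared-distance terms along geodesics of the convex domain.

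The two basic ingredients I would establish first, through the second variation of energy and the Rauch comparison theorem, are uniform Hessian estimates. For one fixed endpoint, writing $\rho=d(\cdot,p)$ and using $\textrm{Hess}(\tfrac12\rho^2)=d\rho\otimes d\rho+\rho\,\textrm{Hess}(\rho)$ together with the $\cot/\coth$ comparison bounds for $\textrm{Hess}(\rho)$ (the upper curvature bound gives the lower estimate, the lower curvature bound the upper one), one obtains an operator-norm estimate $\|\textrm{Hess}(\tfrac12 d(\cdot,p)^2)-\textrm{Id}\|\le\epsilon(K_0,R)$ with $\epsilon(K_0,R)=O(K_0R^2)\to 0$ as $R\to 0$. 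For the joint function $F(x,y)=\tfrac12 d(x,y)^2$ on $M\times M$ I would use that its Hessian at $(x,y)$ in a direction $(u,v)$ equals the index form $I(J,J)=\int_0^1(\|\nabla_tJ\|^2-\langle R(J,\dot\gamma)\dot\gamma,J\rangle)\,dt$ of the Jacobi field $J$ along the connecting minimizing geodesic with $J(0)=u$, $J(1)=v$; comparing $J$ with its flat (parallel-transport) model and bounding the curvature integral, I would prove the uniform estimate
$$
\left|\textrm{Hess}(F)\big((u,v),(u,v)\big)-\|u-L_{yx}v\|^2\right|\le\delta(K_0,R)\big(\|u\|^2+\|v\|^2\big),
$$
with $\delta(K_0,R)=O(K_0R^2)$ uniformly in the basepoints and directions (note $d(x,y)\le 2R$ on the domain).

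Granting these estimates, part (2) is immediate: $\textrm{Hess}(\phi)=2B\,\textrm{Hess}(\tfrac12 d(\cdot,z_0)^2)-2C\,\textrm{Hess}(\tfrac12 d(\cdot,y_0)^2)\ge 2\big[B(1-\epsilon)-C(1+\epsilon)\big]\textrm{Id}$, and since $B\ge qC$ this is $\ge 2C\big[q(1-\epsilon)-(1+\epsilon)\big]\textrm{Id}\ge 0$ as soon as $\epsilon\le\frac{q-1}{q+1}$, which holds for $R'=R'(K_0,q)$ small. For part (1), set $a=\|u\|$, $b=\|v\|$, $w=L_{yx}v$, and use $\|u-w\|^2\ge(a-b)^2$ (the worst case, since the remaining contributions depend only on $a,b$). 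Combining the three Hessian contributions yields
$$
\textrm{Hess}(\varphi)\big((u,v),(u,v)\big)\ge 2A(a-b)^2+2B(1-\epsilon)a^2-2A\delta(a^2+b^2)-2C(1+\epsilon)b^2,
$$
a quadratic form in $(a,b)$. Its nonnegativity reduces to nonnegativity of the $a^2$- and $b^2$-coefficients (the latter using $A\ge 2C$) together with the discriminant inequality $A^2\le\big[A(1-\delta)+B(1-\epsilon)\big]\big[A(1-\delta)-C(1+\epsilon)\big]$. As $R\to 0$ we have $\delta,\epsilon\to 0$, so the right-hand side converges to $(A+B)(A-C)$, and the constraints $B\ge qA$, $C\le A/2$ give $(A+B)(A-C)\ge\frac{1+q}{2}A^2>A^2$ precisely because $q>1$; hence for a suitable $R=R(K_0,q)>0$ the inequality persists and $\textrm{Hess}(\varphi)\ge 0$, so $\varphi$ is convex.

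The hard part is the uniform joint estimate for $\textrm{Hess}(F)$: one must control the deviation of the connecting Jacobi field from its parallel-transport model and bound $\int_0^1\langle R(J,\dot\gamma)\dot\gamma,J\rangle\,dt$ by $O(K_0 d(x,y)^2)(\|u\|^2+\|v\|^2)$, uniformly over all basepoints and admissible directions, and in a way that remains valid in infinite dimensions (where Rauch's theorem still applies). The single-endpoint estimate and the final algebra are comparatively routine; it is exactly this uniform $O(K_0\ell^2)$ control, rather than a merely pointwise or $o(1)$ statement, that makes the margin created by $q>1$ usable and on which the rest of the paper will rely.
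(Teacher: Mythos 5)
Your proposal is correct, and while it shares the paper's overall strategy (reduce convexity to positivity of Hessians computed via second variation and Rauch comparison, then finish with elementary quadratic-form algebra exploiting the margin from $q>1$ and $A\geq 2C$), the key intermediate estimate is packaged genuinely differently. The paper splits the Jacobi field $X$ with $X(0)=v$, $X(\ell)=w$ as $W+V$ (each vanishing at one endpoint), further into tangential and orthogonal components, and extracts from Rauch three separate ratio bounds (the $\sin$, $\sinh$, $\cosh$ quotients kept within $(1-\varepsilon)$, $(1+r)$, $(1+s)$ of $1$), arriving at $D^{2}\psi(x,y)(v,w)^{2}\geq 2(1-\varepsilon)\left(\|v\|^{2}+\|w\|^{2}\right)-4(1+r)\|v\|\,\|w\|$ and then completing the square explicitly via the function $\mathcal{B}(A,r,s,\varepsilon)$, with the monotonicity of $h_{\varepsilon}(A)=\mathcal{B}/A$ and $h_\varepsilon(2)\to 1$ delivering the constraints $A\geq 2C$, $B\geq qA$. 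Your centered estimate $\left|\mathrm{Hess}\,F\bigl((u,v),(u,v)\bigr)-\|u-L_{yx}v\|^{2}\right|\leq O(K_0R^{2})\left(\|u\|^{2}+\|v\|^{2}\right)$ is actually slightly sharper than what the paper retains, but after your worst-case reduction $\|u-L_{yx}v\|\geq\bigl|\,\|u\|-\|v\|\,\bigr|$ the two quadratic forms in $(a,b)$ agree modulo the epsilons, so your discriminant check and the paper's completion of the square are the same algebra; your normalization $A=1$ is legitimate by degree-two homogeneity of the inequality and scale-invariance of the constraints, and it makes the uniformity in $A,B,C$ transparent (the minimum over the constraint set sits at $B=qA$, $C=A/2$, giving the limit $\frac{1+q}{2}>1$). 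The one step you leave at sketch level, the uniform two-sided flat-model estimate for $\mathrm{Hess}\,F$, is provable exactly by the tools you name: for the upper bound, the index lemma (valid here since $2R<\pi/\left(4\sqrt{K_0}\right)$ rules out conjugate points, in infinite dimensions too) lets you test with the parallel interpolation field $J_0(t)=(1-t)P_u(t)+tP_{L_{yx}v}(t)$, whose index form is $\|u-L_{yx}v\|^{2}$ plus a curvature integral bounded by $2K_0\ell^{2}\left(\|u\|^{2}+\|v\|^{2}\right)$; for the lower bound, bound the curvature term using $|K|\leq K_0$ and a Rauch estimate $\sup_{t}\|J(t)\|\leq C\left(\|u\|+\|v\|\right)$, and observe that after parallel-transporting $J$ into the fixed fiber $T_xM$, Cauchy--Schwarz yields $\int_{0}^{1}\|\nabla_t J\|^{2}\,dt\geq \|u-L_{yx}v\|^{2}$. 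What your route buys is a cleaner, reusable statement of the Hessian comparison (and it sidesteps the paper's tangential/orthogonal bookkeeping in steps II--IV); what the paper's route buys is fully explicit constants along the way, which is what it needs to track how $R(K_0,q)$ degenerates as $q\to 1^{+}$.
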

\begin{proof}
\noindent {\bf I.} Let us first consider the function $B(x_0, R)\times B(x_0, R)\ni (x,y)\mapsto \psi(x,y)=d(x,y)^{2}$, where, for the time being,
\begin{equation}\label{range of Rs}
0< 2R<\min\{i(M), c(M), \pi/4\sqrt{K_0}\}
\end{equation}
(we will impose more restrictions on $R$ later on).
We have to estimate the Hessian $D^{2}\psi(x,y)(v, w)^{2}$.
Let $\gamma$ be the unique minimizing geodesic of speed $1$ connecting the points $x$ and $y$, denote the length of $\gamma$ by $\ell=d(x,y)$, and let $X$ be the unique Jacobi field along $\gamma$ such that $X(0)=v$ and $X(\ell)=w$ (note that the points $x$ and $y$ are not conjugate because $d(x,y)<2R< i(M)$). We have
\begin{eqnarray*}
& & D^{2}\psi(x,y)(v,w)^{2}=2\ell \left( \langle
X(\ell),
    X'(\ell)\rangle - \langle X(0),
    X'(0)\rangle \right)=\\
& &= 2\ell \int_{0}^{\ell}\left( \langle X', X'\rangle -
    \langle R(\gamma',X)\gamma', X\rangle \right)dt,
\end{eqnarray*}
where $R$ is the curvature tensor (defined by $R(X,Y)Z=\nabla_{X}\nabla_{Y}Z-\nabla_{Y}\nabla_{X}Z-\nabla_{[X,Y]}Z$ as in \cite{Sakai}).
In particular, from the second equality, it is obvious that when $M$ has sectional curvature
$K\leq 0$ one has $D^{2}\psi(x,y)(v,w)^{2}\geq 0$, hence $\psi$ is convex on the set $B(x_0,R)\times B(x_0, R)$.

Because of the linearity of the Jacobi equation, the field $X$ can be written as $X=W+V$, where $W$ is the unique Jacobi field along $\gamma$ such that $W(0)=0$, $W(\ell)=w$, and $V$ is the unique Jacobi field along $\gamma$ with $V(0)=v$, $V(\ell)=0$.

\medskip

\noindent {\bf II.} Let us suppose first that the fields $W, V$ are both orthogonal to $\gamma$.
Using the Rauch comparison theorem (as stated, for instance in \cite[Theorem 2.3(b) of Chapter IV, p. 149]{Sakai}, which also holds in the infinite dimensional case, see \cite[Chapter XI, Theorem 5.1 and its proof]{Lang}), we obtain, by comparing the Jacobi field $W$ with a corresponding Jacobi field $Y$ in a space $E$ of constant curvature $K_0$ (for instance a suitable sphere in the Euclidean or the Hilbert space), that
$$
\|w\|=\|W(\ell)\|\geq \|Y(\ell)\|=\frac{\sin\left(\sqrt{K_0}\ell\right)}{\sqrt{K_0}}\|W'(0)\|
$$
because in this case $Y(t)=\frac{\sin\left(\sqrt{K_0}\ell\right)}{\sqrt{K_0}} P(t)$, where $P(t)$ denotes the parallel translation of $W'(0)$ along the corresponding geodesic. Similarly, now comparing $W$ with a corresponding Jacobi field $\tilde{Y}$ in a space of constant curvature equal to $-K_0$ (for instance a suitable hyperbolic space modelled on an open half-space of the Euclidean or the Hilbert space), we get
$$
\|w\|=\|W(\ell)\|\leq\|\tilde{Y}(\ell)\|=\frac{\sinh\left(\sqrt{K_0}\ell\right)}{\sqrt{K_0}} \|W'(0)\|,
$$
because in this case $\tilde{Y}(t)=\frac{\sinh\left(\sqrt{K_0} t \right)}{\sqrt{K_0}}P(t)$.
Therefore we have
\begin{equation}\label{estimate for w}
 \frac{\sin\left(\sqrt{K_0}\ell\right)}{\sqrt{K_0}}\|W'(0)\|_x \leq \|w\|_y \leq \frac{\sinh\left(\sqrt{K_0}\ell \right)}{\sqrt{K_0}} \|W'(0)\|_x.
\end{equation}
In a similar manner one can also see that
\begin{equation}\label{estimate for v}
 \frac{\sin\left(\sqrt{K_0}\ell\right)}{\sqrt{K_0}} \|V'(\ell)\|_y \leq \|v\|_x \leq \frac{\sinh\left(\sqrt{K_0}\ell\right)}{\sqrt{K_0}}  \|V'(\ell)\|_y.
\end{equation}

Now, using once again the Rauch comparison Theorem (by comparing $W$ with a corresponding Jacobi field  $Y$ in a space $E$ of constant curvature $K_0$) we also have that
$$
\frac{\langle W'(t), W(t)\rangle}{\|W(t)\|^2} \geq \frac{\langle Y'(t), Y(t)\rangle}{\|Y(t)\|^2}, \textrm{ and } \|W(t)\|\geq \|Y(t)\|,
$$
where $Y(t)=\frac{\sin\left(\sqrt{K_0} t\right)}{\sqrt{K_0}} P(t)$, with $P$ as above. Therefore
$$
\langle W'(t), W(t)\rangle \geq \langle Y'(t), Y(t)\rangle= \frac{\sin\left(\sqrt{K_0} t\right) \, \cos\left(\sqrt{K_0} t\right)}{\sqrt{K_0}} \|W'(0)\|^2,
$$
which combined with $(\ref{estimate for w})$ yields
$$
2\ell \langle W'(\ell), W(\ell)\rangle_y
\geq
\frac{2\ell \sqrt{K_0} \sin\left(\sqrt{K_0}\ell\right) \cos\left(\sqrt{K_0}\ell\right)}{\sinh^{2} \left(\sqrt{K_0}\ell\right)}\|w\|_{y}^{2}.
$$
In a similar way one checks that
$$
-2\ell \langle V'(0), V(0)\rangle_x\geq \frac{2\ell \sqrt{K_0} \sin\left(\sqrt{K_0}\ell\right) \cos\left(\sqrt{K_0}\ell\right)}{\sinh^{2} \left(\sqrt{K_0}\ell\right)} \|v\|_{x}^{2}
$$
(just note that $V(t)=J(\ell-t)$, where $J$ is the unique Jacobi field along the geodesic
$t\mapsto \gamma(\ell-t)$ joining $y$ to $x$ with $J(0)=0$, $J(\ell)=v$, and therefore
$V'(t)=-J'(\ell-t)$, which accounts for the sign change in the scalar product).

Now, let $r, s, \varepsilon$ be three\footnote{In the proof of the following Lemma the reader will see why here we choose to work with these three numbers instead of just one.} positive numbers such that
\begin{equation}
2>\frac{1+s}{1-\varepsilon}>1.
\end{equation}
Since the three functions $t\mapsto \frac{t\sin t \cos t}{\sinh^2(t)}$, $t\mapsto\frac{t}{\sin t}$ and $t\mapsto\frac{t\cosh t}{\sinh t}$ tend to $1$ as $t\to 0^{+}$ and are continuous and stricly positive on $(0, \frac{\pi}{4}]$, we can find $R>0$ sufficiently small so that, for all $\ell\in (0, 2R]$,
\begin{eqnarray}\label{what smallness of R gives}
& & \frac{\ell \sqrt{K_0} \sin\left(\sqrt{K_0}\ell\right) \cos\left(\sqrt{K_0}\ell\right)}{\sinh^{2} \left(\sqrt{K_0}\ell\right)}\geq (1-\varepsilon)
\\
& & \frac{\ell \sqrt{K_0}}{\sin\left(\sqrt{K_0}\ell\right)}\leq 1+r \\
& & \frac{t\cosh t}{\sinh t}\leq 1+s
\end{eqnarray}
hold together with (\ref{range of Rs}).

Then we have
\begin{equation}\label{estimate for wv}
2\ell \langle W'(\ell), W(\ell)\rangle_y - 2\ell \langle V'(0), V(0)\rangle_x\geq 2(1-\varepsilon)\left(\|w\|_y^{2}+\|v\|_{x}^2\right).
\end{equation}
Thus, by combining (\ref{estimate for w}), (\ref{estimate for v}), (3.5), and
(\ref{estimate for wv}), we obtain
\begin{eqnarray*}
& & D^2 \psi(x,y)(v,w)^2= 2\ell \left( \langle
X(\ell), X'(\ell)\rangle - \langle X(0),  X'(0)\rangle \right)=\\
& & 2\ell\left( \langle W(\ell)+V(\ell), W'(\ell)+V'(\ell) \rangle - \langle W(0)+V(0), W'(0)+V'(0) \rangle \right)=\\
& & 2\ell\langle W(\ell), W'(\ell)\rangle - 2\ell\langle V(0), V'(0)\rangle +
2\ell\langle w, V'(\ell)\rangle -2\ell\langle v, W'(0)\rangle \geq\\
& & 2(1-\varepsilon)\left( \|w\|_y^{2} + \|v\|_{x}^{2}\right) -2\ell\|W'(0)\|_{x} \|v\|_{y} -2\ell \|w\|_y \|V'(\ell)\|_y \geq\\
& & 2(1-\varepsilon)\left( \|w\|_y^{2} + \|v\|_{x}^{2}\right) -4\frac{\ell\sqrt{K_0}}{\sin\left(\sqrt{K_0}\ell\right)}\|w\|_y \|v\|_x\geq \\
& &  2(1-\varepsilon)\left( \|w\|_y^{2} + \|v\|_{x}^{2}\right) -4(1+r)\|w\|_y \|v\|_x,
\end{eqnarray*}
that is
\begin{equation}\label{estimate for dxy}
D^2 \psi(x,y)(v,w)^2\geq 2(1-\varepsilon)\left( \|w\|_y^{2} + \|v\|_{x}^{2}\right) -4(1+r)\|w\|_y \|v\|_x.
\end{equation}

\medskip

\noindent {\bf III.} Let us now suppose that the Jacobi fields $W$, $V$ are tangent to $\gamma$. Then $X=W+V$ is of the form $X(t)=(at+b)\gamma'(t)$ for some $a, b\in\R$; in particular $v=X(0)=b\gamma'(0)$, $w=X(\ell)=(a\ell+b)\gamma'(\ell)$, and $X'(t)=a\gamma'(t)$. Hence also $\|v\|_{x}^{2}=b^{2}$, $\|w\|_{y}^{2}=a^{2}\ell^{2}+b^{2}+2a\ell b$, $\langle L_{xy}(v), w\rangle_{y}=b^{2}+ab\ell$, and therefore
\begin{eqnarray*}
& &D^{2}\psi(x,y)(v,w)^{2}=
2\ell \left( \langle
X(\ell),
    X'(\ell)\rangle - \langle X(0),
    X'(0)\rangle \right)=\\
    & &
    2\ell \left( \langle
a\ell \gamma'(\ell) +b\gamma'(\ell),
    a\gamma'(\ell)\rangle - \langle b\gamma'(0)),
    a\gamma'(0)\rangle \right)=2\ell^2 a^{2}=\\
    & &
    2\left( \|v\|_{x}^{2}+\|w\|_{y}^{2}-2\langle L_{xy}(v), w\rangle_{y}\right)\geq\\
    & &
    2(1-\varepsilon)\left( \|w\|_y^{2} + \|v\|_{x}^{2}\right) -4\langle L_{xy}(v), w\rangle_{y},
\end{eqnarray*}
that is
\begin{equation}\label{estimate of the tangent component}
D^{2}\psi(x,y)(v,w)^{2}\geq 2(1-\varepsilon)\left( \|w\|_y^{2} + \|v\|_{x}^{2}\right)
-4\langle L_{xy}(v), w\rangle_{y}.
\end{equation}

\noindent {\bf IV.} In the general case we have that every Jacobi field $X$ along $\gamma$
can be written in the form
    $
    X=X^{\top}+X^{\bot},
    $
where $X^{\top}$ and $X^{\bot}$ are Jacobi fields along $\gamma$,
$X^{\top}$ and $(X^{\top})'$ are tangent to $\gamma$, and
$X^{\bot}$ and $(X^{\bot})'$ are orthogonal to $\gamma$ (see for instance \cite[Propositions 2.3 and 2.4 of Chapter IX]{Lang}). In
particular $\langle X^{\top}, (X^{\bot})'\rangle=0$ and $\langle
X^{\bot}, (X^{\top})'\rangle=0$. This implies that
    $$
    \langle X'(t), X(t)\rangle=
    \langle (X^{\top})'(t), X^{\top}(t)\rangle+
    \langle (X^{\bot})'(t), X^{\bot}(t)\rangle,
    $$
and therefore, by combining estimates (\ref{estimate for dxy}) and (\ref{estimate of the tangent component}), we obtain
\begin{eqnarray*}
& &D^{2}\psi(x,y)(v,w)^{2}=
2\ell \left( \langle
X(\ell),
    X'(\ell)\rangle - \langle X(0),
    X'(0)\rangle \right)=
    \\
    & &
    2\ell \left( \langle
X^{\top}(\ell),
    (X^{\top})'(\ell)\rangle - \langle X^{\top}(0),
    (X^{\top})'(0)\rangle \right) +\\
    & &+2\ell \left( \langle
X^{\bot}(\ell),
    (X^{\bot})'(\ell)\rangle - \langle X^{\bot}(0),
    (X^{\bot})'(0)\rangle \right)\geq\\
    & &
    2(1-\varepsilon)\left( \|w^{\top}\|_y^{2} + \|v^{\top}\|_{x}^{2}\right) -4\langle L_{xy}(v^{\top}), w^{\top}\rangle_{y} + \\
    & &
    +2(1-\varepsilon)\left( \|w^{\bot}\|_y^{2} + \|v^{\bot}\|_{x}^{2}\right) -4(1+r)\|w^{\bot}\|_y \|v^{\bot}\|_x \geq\\
    & &
2(1-\varepsilon)\left( \|w\|_y^{2} + \|v\|_{x}^{2}\right) -4(1+r)\|w\|_y \|v\|_x,
\end{eqnarray*}
where we have also used the following inequalities
\begin{eqnarray*}
& &\langle L_{xy}(v^{\top}), w^{\top}\rangle_{y} +\|w^{\bot}\|_y \|v^{\bot}\|_x\leq
\|v^{\top}\|_x \|w^{\top}\|_y+\|w^{\bot}\|_y \|v^{\bot}\|_x\leq \\
& &\|v^{\top}+v^{\bot}\|_x \, \|w^{\top}+w^{\bot}\|_y,
\end{eqnarray*}
of which the second one is immediately checked by squaring both sides and observing that
$$
2\|v^{\top}\|_x \|w^{\bot}\|_y \|v^{\bot}\|_x \|w^{\top}\|_y\leq
\|v^{\top}\|_x^{2} \|w^{\bot}\|_y^{2} + \|v^{\bot}\|_x^{2} \|w^{\top}\|_y^{2}.
$$
Therefore we also get in the general case that
\begin{equation}\label{final estimate for dxy}
D^{2}\psi(x,y)(v,w)^{2} \geq
2(1-\varepsilon)\left( \|w\|_y^{2} + \|v\|_{x}^{2}\right) -4(1+r)\|w\|_y \|v\|_x.
\end{equation}

\medskip

\noindent {\bf V.} Now let us consider the function $B(x_0, R)\ni x\mapsto\eta(x):=d(x, x_0)^2$. If we take $y=x_0$, $w=0$ in the above estimation for $D^{2}\psi(x,y)$, we immediately get
\begin{equation}\label{lower estimate for dx}
D^{2}\eta(x)(v)^{2}=D^{2}\psi(x,x_0)(v,0)^{2}\geq 2(1-\varepsilon)\|v\|_{x}^{2}.
\end{equation}

On the other hand, a further application of the Rauch comparison theorem (very similar to what we have already done, by comparing with a corresponding Jacobi field in a space of constant curvature $K_0$; see for instance \cite[Exercise 4 following Lemma 2.9 in Chapter 4]{Sakai}) shows that
\begin{equation}\label{upper estimate for dx}
D^{2}\eta(x)(v)^{2}=D^{2}\psi(x,x_0)(v,0)^{2}\leq 2\frac{\sqrt{K_0} d(x,x_0) \cosh\left(\sqrt{K_0}d(x,x_0)\right)}{\sinh\left(\sqrt{K_0} d(x,x_0)\right)}\|v\|_{x}^{2},
\end{equation}
and using (3.6) and (\ref{lower estimate for dx}) we get
\begin{equation}\label{estimate for dx}
2(1-\varepsilon)\|v\|_{x}^{2}\leq D^{2}\eta(x)(v)^{2}\leq 2(1+s)\|v\|_{x}^{2}.
\end{equation}

\medskip

\noindent {\bf VI.} Now we can proceed with the proof of the Lemma. Consider the function
$$
B(x_0,R)\times B(x_0, R)\ni (x,y)\mapsto \varphi(x,y)=A\psi(x,y)+B\eta(x; x_0)-C\eta(y; y_0),
$$
where we denote $\eta(x; x_0)=d(x, x_0)^{2}$ and $\eta(y; y_{0})=d(y, y_0)^{2}$.

According to our previous estimates, we have
\begin{eqnarray*}
& & D^{2}\varphi(x,y)(v,w)^{2}=\\
& & A D^{2}\psi(x,y)(v,w)^{2} + BD^{2}\eta(x;x_0)(v)^{2}
- C D^{2}\eta(y;y_0)(w)^{2}\geq\\
& & 2A\left( (1-\varepsilon)\left(\|v\|^{2} + \|w\|^{2}\right)-2(1+r)\|v\| \, \|w\|\right)
+2B(1-\varepsilon)\|v\|^{2}-2C (1+s) \|w\|^{2},
\end{eqnarray*}
and we want to find $A,B$ such that the bottom term is positive. Without loss of generality we may and do assume that $C=1$, and then we need to find $A,B>1$ such that
$$
A(1-\varepsilon)\left(\|v\|^{2} + \|w\|^{2}\right)-2A(1+r)\|v\| \, \|w\|
+B(1-\varepsilon)\|v\|^{2}-(1+s) \|w\|^{2}\geq 0
$$
for all $(v,w)\in TM_{(x,y)}$, $(x,y)\in B(x_0, R)\times B(x_0, R)$.
Assume that $$A\geq 2>\frac{1+s}{1-\varepsilon}.$$
We can then write
\begin{eqnarray*}
& & A(1-\varepsilon)\left(\|v\|^{2} + \|w\|^{2}\right)-2A(1+r)\|v\| \, \|w\|
+B(1-\varepsilon)\|v\|^{2}-(1+s) \|w\|^{2}=\\
& &  (1-\varepsilon)(A+B) \|v\|^{2}+ \left( A(1-\varepsilon)-(1+s)\right)\|w\|^2 -2A(1+r)\|v\| \, \|w\|=\\
& & \left( \alpha \|v\| -\beta\|w\|\right)^{2} + (1-\varepsilon)\left(B-\mathcal{B}(A, r,s,\varepsilon)\right)\|v\|^{2},
\end{eqnarray*}
where
$$
\mathcal{B}(A, r,s,\varepsilon):=\frac{ A^{2}\left( (1+r)^{2} -(1-\varepsilon)^{2}\right) + A(1+s)(1-\varepsilon)}{(1-\varepsilon)\left( A(1-\varepsilon)-(1+s)\right)},
$$
$$
\alpha:=\sqrt{(A+\mathcal{B}(A, r,s,\varepsilon))(1-\varepsilon)}, \,\,\, \beta:=\sqrt{(1-\varepsilon)A-(1+s)}, \textrm{ and }
$$
$$
\alpha\beta =A(1+r).
$$
Now, the functions
$$
[2, \infty)\ni A\mapsto h_{\varepsilon}(A):=\frac{\mathcal{B}(A, \varepsilon,\varepsilon,\varepsilon)}{A}=\frac{4\varepsilon A+1-\varepsilon^{2}}{(1-\varepsilon)^{2}A-1+\varepsilon^{2}}
$$
are easily checked to be decreasing and nonnegative for all $\varepsilon\in (0,\frac{1}{4})$, hence
$$
\max_{A\geq 2}h_{\varepsilon}(A)=h_{\varepsilon}(2)=\frac{8\varepsilon +1-\varepsilon^{2}}{2(1-\varepsilon)^{2}A-1+\varepsilon^{2}},
$$
and this quantity converges to $1$ as $\varepsilon$ goes to $0$. Then we can take
$\varepsilon=s=r$ and assume that $\varepsilon$ (and consequently $R$ too) is small enough so that $h_{\varepsilon}(2)\leq q$, and in particular
$$
0\leq \frac{\mathcal{B}(A, \varepsilon, \varepsilon,\varepsilon)}{A}\leq h_{\varepsilon}(A)\leq q \textrm{ for all } A\geq 2.
$$
Therefore, for all $A\geq 2$ and $B\geq qA$ we also have $B\geq \mathcal{B}(A, \varepsilon,\varepsilon,\varepsilon)\geq 0$, and  consequently
$D^{2}\varphi (x,y)(v,w)^{2}\geq 0$ for every $x_0\in M, y_0\in B(x_0, R)$ and
$x,y\in B(x_0, R)$. Thus $(1)$ is proved.

Finally, let us show $(2)$. This is much easier. We have
\begin{eqnarray*}
& & D^{2}\phi(x)(v)^{2}= BD^{2}\eta(x;z_0)(v)^{2}
- C D^{2}\eta(x;y_0)(v)^{2}\geq\\
& & 2B(1-\varepsilon)\|v\|^{2}-2C (1+s) \|v\|^{2},
\end{eqnarray*}
so it is clear that we can choose $\varepsilon, s, R>0$ small enough so that for all $B\geq qC$ we have $$\frac{B}{C}\geq q\geq\frac{1+s}{1-\varepsilon}$$ and consequently $D^{2}\phi(x)(v)^{2}\geq 0$ for every $x_0, y_0, z_0, x\in M$ with $x, y_0, z_0\in B(x_0, R)$.
\end{proof}

\medskip

There are interesting variants of the preceding Lemma. For instance, if we further assume that the sectional curvature of $M$ is nonpositive, one can show that the mentioned compensation property holds semiglobally.

\begin{lemma}\label{convexity lemma for nonpositive K}
Let $M$ be a Riemannian manifold with sectional curvature $K$ such that $-K_0\leq K\leq 0$ for some $K_0>0$. Assume also that $i(M)>0$ and $c(M)>0$, and fix $R$ with $0< 2R<\min\{i(M), c(M)\}$. Then, for every $C_0\geq0$ there exist $A_0, B_0>0$ (dependent only on $K_0$, $R$, and $C_0$) such that, for every $A\geq A_0$ and $B\geq B_0$, and for every $x_0\in M$ and $y_0\in B(x_0, R)$, the function
$$
\varphi(x,y)=A d(x,y)^2+B d(x, x_0)^{2}-C_{0} d(y, y_0)^{2}
$$ is strongly convex on $B(x_0, R)\times B(x_0, R)$.
\end{lemma}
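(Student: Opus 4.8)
The plan is to re-run the Hessian computations from the proof of Lemma \ref{convexity lemma}, but to exploit the sign hypothesis $K\le 0$ in order to obtain clean, curvature-sign-driven lower bounds that hold on the whole region $B(x_0,R)\times B(x_0,R)$ rather than only for small $R$. Writing again $\psi(x,y)=d(x,y)^2$ and $\eta(\cdot\,;p)=d(\cdot\,,p)^2$, the key point is that the lower estimates (\ref{final estimate for dxy}) and (\ref{lower estimate for dx}) can now be taken with $\varepsilon=r=s=0$: since $-K_0\le K\le 0$, the relevant Rauch comparisons are made against flat space (curvature $0$) for all the lower bounds and against curvature $-K_0$ only for the upper bound. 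Hence no smallness assumption on $R$ is needed beyond $2R<\min\{i(M),c(M)\}$, which merely guarantees unique minimizing geodesics and convex balls, and this is exactly what produces the semiglobal conclusion.

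Concretely, comparing the orthogonal Jacobi fields $W,V$ and the tangent ones exactly as in parts II--IV of that proof, but using the flat model for the lower estimates, I would establish
$$
D^2\psi(x,y)(v,w)^2\ \ge\ 2\left(\|v\|^2+\|w\|^2\right)-4\,\|v\|\,\|w\|,
$$
and, taking $y=x_0$ and $w=0$,
$$
D^2\eta(x;x_0)(v)^2\ \ge\ 2\,\|v\|^2,
$$
both valid for all $x,y\in B(x_0,R)$. For the single term that must be compensated, the comparison with curvature $-K_0$ used in (\ref{upper estimate for dx}), together with the monotonicity of $t\mapsto\sqrt{K_0}\,t\coth(\sqrt{K_0}\,t)$ and the fact that $d(y,y_0)<2R$ for $y,y_0\in B(x_0,R)$, gives a uniform upper bound
$$
D^2\eta(y;y_0)(w)^2\ \le\ 2\Lambda\,\|w\|^2,\qquad
\Lambda:=\sqrt{K_0}\,(2R)\coth\!\left(\sqrt{K_0}\,(2R)\right),
$$
where $\Lambda$ depends only on $K_0$ and $R$.

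With these in hand, for $A,B>0$ and the fixed $C_0\ge0$ I would bound
$$
D^2\varphi(x,y)(v,w)^2=A\,D^2\psi+B\,D^2\eta(\cdot\,;x_0)-C_0\,D^2\eta(\cdot\,;y_0)
$$
from below by the quadratic form
$$
(2A+2B)\|v\|^2-4A\,\|v\|\,\|w\|+(2A-2C_0\Lambda)\|w\|^2
$$
in the scalars $(\|v\|,\|w\|)$. This form is positive definite, with least eigenvalue bounded away from $0$, as soon as $A>C_0\Lambda$ and $B(A-C_0\Lambda)>A\,C_0\Lambda$; for instance $A_0:=2C_0\Lambda+1$ together with a correspondingly large $B_0$ (both depending only on $K_0,R,C_0$, since the required threshold on $B$ is decreasing in $A$ and hence maximal at $A=A_0$) works for all $A\ge A_0$, $B\ge B_0$. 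The resulting lower bound is then a fixed positive-definite form, so $D^2\varphi(x,y)(v,w)^2\ge c\left(\|v\|^2+\|w\|^2\right)$ for some $c=c(K_0,R,C_0)>0$, uniformly in $x_0,y_0$ and in $(x,y)\in B(x_0,R)\times B(x_0,R)$, which is precisely strong convexity of $\varphi$.

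The only delicate point --- and the place where $K\le 0$ is genuinely used --- is the claim that the lower estimates for $D^2\psi$ and for $D^2\eta(\cdot\,;x_0)$ hold on the entire region with constants independent of $R$. In Lemma \ref{convexity lemma} these came with factors $(1-\varepsilon)$ and forced $R$ to be small. Here I expect the main work to be checking that, for $K\le 0$, the flat comparison yields the logarithmic-derivative bound $\langle W'(\ell),W(\ell)\rangle\ge \ell^{-1}\|w\|^2$ and the norm bound $\|W'(0)\|\le \ell^{-1}\|w\|$ (with the analogues for $V$), so that the cross terms $2\ell\langle v,W'(0)\rangle$ and $2\ell\langle w,V'(\ell)\rangle$ are each controlled by $2\|v\|\,\|w\|$ with no curvature correction. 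Once these sign-driven bounds are in place, the remainder is the elementary positive-definiteness computation above.
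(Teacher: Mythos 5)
Your proposal is correct and follows essentially the same route as the paper's own proof: rerun the Rauch comparison estimates from Lemma \ref{convexity lemma}, using the flat model for all the lower bounds (legitimate since $K\le 0$, which is exactly why no smallness condition on $R$ beyond $2R<\min\{i(M),c(M)\}$ is needed), bound $D^{2}\eta(\cdot\,;y_0)$ above by a constant depending only on $K_0$ and $R$, and finish with an elementary positive-definiteness computation, extending from $(A_0,B_0)$ to all $A\ge A_0$, $B\ge B_0$ via convexity of the squared distance in nonpositive curvature. The only cosmetic difference is that your direct logarithmic-derivative comparison at $t=\ell$ yields the clean coefficient $2$ in the lower bound for $D^2\psi$, whereas the paper passes through $\|W'(0)\|$ and so retains a factor $(1-\varepsilon)$ with $\varepsilon=\varepsilon(R,K_0)\in(0,1)$; both choices lead to the same conclusion with constants depending only on $K_0$, $R$, $C_0$.
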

\begin{proof}
Let us first put $A=A_0$ and $B=B_0$.
The proof goes along the same lines (sometimes using Rauch's theorem to compare a Jacobi field $J$ with a corresponding Jacobi field in a space of constant curvature equal to $0$, for instance the Euclidean or the Hilbert space), in order to arrive to the following estimation
\begin{eqnarray*}
& & D^{2}\varphi(x,y)(v,w)^{2}=\\
& & A_0 D^{2}\psi(x,y)(v,w)^{2} + B_0D^{2}\eta(x;x_0)(v)^{2}
- C_0 D^{2}\eta(y;y_0)(w)^{2}\geq\\
& & 2A_0\left( (1-\varepsilon)\left(\|v\|^{2} + \|w\|^{2}\right)-2\|v\| \, \|w\|\right)
+2B_0(1-\varepsilon)\|v\|^{2}-2C_0 N \|w\|^{2},
\end{eqnarray*}
where now $R$ is fixed and not necessarily small (with the only restriction that
$0<2R<\min\{i(M), c(M)\}$); where $N$ (taking the place of $(1+s)$ in the proof of
Lemma \ref{convexity lemma}) is a number depending only on $R, K_0$, and where
$\varepsilon\in (0, 1)$ is neither particularly small, but also a function of $R, K_0$.
(Now we have $r=0$).

We may assume that $C_0 N=1$, and we have
\begin{eqnarray*}
& & 2A_0\left( (1-\varepsilon)\left(\|v\|^{2} + \|w\|^{2}\right)-2\|v\| \, \|w\|\right)
+2B_0(1-\varepsilon)\|v\|^{2}-2 \|w\|^{2}=\\
& & (1-\varepsilon)A_0\left(\|v\|^{2}+\|w\|^{2}\right) +
(1-\varepsilon)(A_0+2B_0)\|v\|^{2} +\\
& & +\left( (1-\varepsilon)A_0 -2\right) \|w\|^{2} -4A_0 \|v\|\,\|w\|=\\
& &(1-\varepsilon)A_0\left(\|v\|^{2}+\|w\|^{2}\right) + \left( \alpha\|v\|-\beta\|w\|\right)^{2},
\end{eqnarray*}
where
$$
\alpha:=\sqrt{(A_0+2B_0)(1-\varepsilon)}, \,\,\, \beta:=\sqrt{(1-\varepsilon)A_0-2}, \textrm{ and } \alpha\beta =2A_0,
$$
which is easily satisfied if for instance we fix $A_0>2/(1-\varepsilon)$ and define
$$
B_0=\frac{1}{2}\left(\frac{4A_0^2}{(1-\varepsilon)\left( (1-\varepsilon)A_0-2\right)}-A_0\right).
$$
For these $A_0, B_0$ we thus have
$$
D^{2}\varphi(x,y)(v,w)^{2}\geq (1-\varepsilon)A_0\left(\|v\|^{2}+\|w\|^{2}\right),
$$
and therefore the function $\varphi$ is strongly convex on $B(x_0, R)\times B(x_0, R)$.

This shows the Lemma in the case when $A=A_0$ and $B=B_0$. For $A\geq A_0$ and $B\geq B_0$
the result follows at once taking into account that when $K\leq 0$ the function
$B(x_0,R)\times B(x_0, R)\ni (x,y)\mapsto d(x,y)^{2}$ is convex for every $R$ with
$0<2R<\min\{i(M), c(M)\}$.
\end{proof}

\medskip

\section{An estimate of the difference between parallel translation and the differential of the exponential map}

\begin{proposition}\label{estimate of the difference between parallel transport and diff of exp}
Let $M$ be a Riemannian manifold (possibly infinite dimensional). For every $x_0\in M$ there exist $r>0$ and $C>0$ such that
$$
\|d\exp_{x}\left(\exp_{x}^{-1}(y)\right)-L_{xy}\|_{\mathcal{L}\left(T_xM, T_yM\right)}
\leq C d(x,y)^{2}
$$
for every $x,y\in B(x_0, r)$.
\end{proposition}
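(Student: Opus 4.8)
The plan is to reduce the estimate to a second-order comparison, along the minimizing geodesic joining $x$ and $y$, between a suitable Jacobi field and a parallel vector field, and then to integrate the Jacobi equation twice.

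First I would fix $x_0$ and, using $i(x_0)>0$ and $c(x_0)>0$, choose $r>0$ so small that for all $x,y\in B(x_0,r)$ the vector $v:=\exp_x^{-1}(y)$ is well defined, the curve $\gamma(t):=\exp_x(tv)$, $t\in[0,1]$, is the unique minimizing geodesic from $x$ to $y$ (so that $\|\gamma'\|\equiv\ell:=d(x,y)=\|v\|$), and $\gamma$ stays in a fixed neighborhood of $x_0$ on which the sectional curvature is bounded, say $|K|\le K_1$. Given $w\in T_xM$, I would invoke the standard identity relating the differential of the exponential map to Jacobi fields: the Jacobi field $J$ along $\gamma$ with $J(0)=0$ and $J'(0)=w$ satisfies $J(t)=d\exp_x(tv)(tw)$, so that $J(1)=d\exp_x(v)(w)$. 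On the other hand, if $P$ denotes the parallel field along $\gamma$ with $P(0)=w$, then $P(1)=L_{xy}(w)$. Thus the quantity to estimate is exactly $\|J(1)-P(1)\|$, and taking the supremum over $\|w\|=1$ will yield the operator-norm bound.

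The key idea is to compare $J$ not with $P$ but with $tP$. Setting $E(t):=J(t)-tP(t)$, a direct computation using $P'\equiv 0$ and the Jacobi equation $J''=-R(\gamma',J)\gamma'$ gives $E(0)=0$, $E'(0)=J'(0)-P(0)=w-w=0$, and $E''=J''=-R(\gamma',J)\gamma'$, while $E(1)=J(1)-P(1)$ is precisely the vector to be bounded. Working in a parallel orthonormal frame along $\gamma$ turns covariant derivatives into ordinary ones, so the vanishing initial data give $E(1)=\int_0^1(1-\tau)E''(\tau)\,d\tau$, whence $\|E(1)\|\le\int_0^1\|R(\gamma',J)\gamma'\|\,d\tau$. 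Using the algebraic bound $\|R(\gamma',J)\gamma'\|\le cK_1\|\gamma'\|^2\|J\|=cK_1\ell^2\|J\|$ (valid with a universal constant $c$, by polarization of the two-sided sectional curvature bound, and so also available in infinite dimensions), together with a Rauch/Gronwall estimate $\|J(\tau)\|\le C_1\|w\|$ that is uniform for $\ell\le r$, I obtain $\|J(1)-P(1)\|\le cC_1K_1\ell^2\|w\|=C\,d(x,y)^2\|w\|$, which is the desired inequality.

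The routine parts are the Jacobi-field identity, the parallel-frame computation, and the curvature and Jacobi bounds; the point that requires care, and which I expect to be the main obstacle, is twofold. First, one must recognize that the correct comparison field is $tP$ rather than $P$, so that \emph{both} $E(0)$ and $E'(0)$ vanish and the whole difference $J(1)-P(1)$ becomes a second-order remainder governed by the curvature; comparing $J$ with $P$ directly fails, since these fields already differ to first order. Second, one must arrange \emph{uniformity} of all constants as $x$ ranges over $B(x_0,r)$, and this is exactly where the hypotheses $i(x_0),c(x_0)>0$ enter: they furnish a fixed radius on which $\exp_x^{-1}$ is well behaved and on which a single curvature bound $K_1$ and a single Jacobi-field constant $C_1$ are valid, so that both $r$ and $C$ depend only on $x_0$. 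In the infinite-dimensional case one simply replaces each direct tensor manipulation by an appeal to the Rauch comparison theorem, exactly as in the convexity lemmas above.
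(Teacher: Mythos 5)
Your proof is correct, and it shares the paper's central reduction: both arguments compare $d\exp_x$ with parallel transport through the field $E(t)=J(t)-tP(t)$, whose vanishing at $t=0$ to higher order turns the whole difference $d\exp_x(\exp_x^{-1}(y))(w)-L_{xy}(w)=E(1)$ into a Taylor remainder. Where you genuinely diverge is in how that remainder is bounded \emph{uniformly} in $x,y$. The paper never invokes a curvature bound here: it pairs $J-tP$ with a second parallel field to form the scalar function $\Phi(x,v,h,w,t)=\langle J_{x,v,h}(t)-tP_{x,v,h}(t),P_{x,v,w}(t)\rangle$, checks that $\Phi$, $\partial\Phi/\partial t$ and $\partial^2\Phi/\partial t^2$ vanish at $t=0$, and then obtains $|\Phi|\le C_0t^3/3$ from mere continuity of $\partial^3\Phi/\partial t^3$ near $(x_0,0,0,0,0)$, which follows from the smooth dependence of the geodesic and parallel-transport flows on their data; the constant $C_0$ is a soft bound on a third derivative and is not geometrically explicit. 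You instead bound $E''=J''=-R(\gamma',J)\gamma'$ pointwise by $cK_1\|\gamma'\|^2\|J\|$ (a local two-sided curvature bound plus polarization, both valid in infinite dimensions since norm-continuity of the curvature tensor gives local boundedness without compactness) and control $\|J\|$ by Rauch/Gronwall; with your $[0,1]$-parametrization the factor $\|\gamma'\|^2=\ell^2$ already carries the $d(x,y)^2$, so second-order vanishing of $E$ suffices, whereas the paper's unit-speed normalization needs the third-order vanishing of $\Phi$. Your route buys an explicit constant, $C=cC_1K_1$ in terms of the local curvature bound, and is the more classical comparison-geometry argument; the paper's route buys freedom from any curvature estimate or Jacobi-field comparison (the pairing with $P_{x,v,w}$ also reduces everything to scalar calculus along the curve, sidestepping the trivialization step you perform with a parallel frame), at the price of an inexplicit constant. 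One cosmetic point: your observation that $E''(0)=0$ is true but unused --- your integral-remainder step needs only $E(0)=E'(0)=0$ together with the pointwise bound on $E''(\tau)$ for all $\tau\in[0,1]$.
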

\begin{proof}
Let $x, y$ be two points of $M$ connected by a minimizing geodesic $\gamma:[0, \ell]\to M$ with $\|\gamma'(0)\|=1$, $\ell=d(x,y)$, and assume that there are no conjugate points in $\gamma[0, \ell]$. For each $h\in T_xM\equiv T(T_xM)_{t\gamma'(0)}$, it is well known that the differential of $\exp_x$ on the segment $[0, \ell v]$ is given by
$$
J(t)=d\exp_{x}\left(t v\right)(t h),
$$
where $v=\gamma'(0)$ and $J:[0, \ell]\to TM$ is the unique vector field along the geodesic $\gamma$ satisfying the Jacobi equation
\begin{equation}\label{Jacobi equation}
J''(t)=-R\left(\gamma'(t),J(t)\right)\gamma'(t)
\end{equation}
with initial conditions
$$
J(0)=0, \, J'(0)=h.
$$
We will denote this particular Jacobi field by $J_{x,v,h}(t)$, which we will abbreviate to $J(t)$ when the data $x,v,h$ are understood.
Because the exponential map $(x, v)\mapsto \exp_{x}(v)$ is of class $C^{\infty}$ on an open subset of $TM$, it is clear that the map
$
(x,v,h,t)\mapsto J_{x,v,h}(t)
$
is also of class $C^{\infty}$ wherever it is defined (in particular for all $x\in M$, $v,h\in T_xM$ with $\|v\|\leq 1, \, \|h\|\leq 1$ and $|t|$ sufficiently small depending on $x$).

Let us also consider $P:[0, \ell]\to TM$, the parallel translation of $h$ along $\gamma$ (that is, the unique parallel field along $\gamma$ with $P(0)=h=J'(0)$).
We will denote this particular parallel field by $P_{x,v,h}(t)$ (but again we will abbreviate this expression to $P(t)$ if the point $x$ and the vectors $v,h$ are understood). With the notation we use, we have
$
P_{x, v, h}(t)=L_{x\exp_{x}(tv)}(h).
$
Since $P(t)$ is the solution of a linear ordinary differential equation which depends $C^{\infty}$-wise on the initial data $x,v, h$, it follows from the theorem of differentiability of the flow of an ODE that the mapping
$
(x,v,h,t)\mapsto P_{x,v,h}(t)
$
is of class $C^{\infty}$ wherever it is defined (in particular, by homogeneity of geodesics and parallel translation, on the same set where $J_{x,v,h}(t)$ is defined).

Consider $\Omega=\{\left( (x,v), (y,h), (z,w), t\right)\in TM\times TM\times TM\times \R \, : \, x=y=z\}$, which is a submanifold of $TM\times TM\times TM\times \R$. We will denote the points of $\Omega$ by $(x,v,h,w,t)$ instead of the more cumbersome expression $\left( (x,v), (x,h), (x,w), t\right)$. According to the considerations we have made, the mapping
$$
\Phi(x,v,h,w,t):= \langle J_{x,v,h}(t)- t P_{x,v,h}(t) \, , \, P_{x,v,w}(t) \rangle
$$
is well defined and of class $C^{\infty}$ on an open subset $\mathcal{U}$ of $\Omega$, and $(x_0, 0, 0, 0, 0)\in \mathcal{U}$ for every given point $x_0\in M$.

Now fix $x_0\in M$. By the definition of the topology of $\Omega$ as a submanifold of $(TM)^{3}\times \R$, there exists $R>0$ such that the mapping $\Phi$ is defined and $C^{\infty}$ on a neighborhood of $(x_0, 0,0,0,0)$ of the form $$\mathcal{U}_0=\{(x,v,h,w,t)\in\Omega \, : \,  \max\{d(x,x_0), \|v\|,
\|h\|, \|w\|, |t|\}\leq R\}.$$ Moreover, because the partial derivative $\partial^{3}\Phi/\partial t^{3}$ is continuous, we can assume that this number $R$ is small enough so that
\begin{equation}\label{bound for the third derivative of Phi with respect to t}
\left| \frac{\partial^{3}\Phi}{\partial t^{3}}(x,v,h,w,t) \right|\leq C_0
\end{equation}
for every $(x,v,h,w,t)\in\mathcal{U}_0$, where
$
C_0=1 + \left| \frac{\partial^{3}\Phi}{\partial t^{3}}(x_0,0,0,0,0) \right|.
$
Now observe that
\begin{equation}
\frac{\partial\Phi}{\partial t }(x,v,h,w,t)=
\langle J_{x,v,h}'(t)- P_{x,v,h}(t) \, , \, P_{x,v,w}(t) \rangle,
\end{equation}
and
\begin{equation}
\frac{\partial^2\Phi}{\partial t^2 }(x,v,h,w,t)=
\langle J_{x,v,h}''(t)\, , \, P_{x,v,w}(t) \rangle.
\end{equation}
Since $h=J'(0)=P(0)$, and $J''(0)=-R(\gamma'(0), J(0))\gamma'(0)=0$ (because $J(0)=0$), we immediately check that for $t=0$ we have
\begin{equation}
0=\Phi(x,v,h,w,0)=\frac{\partial\Phi}{\partial t}(x,v,h,w,0)=\frac{\partial^2\Phi}{\partial t^2}(x,v,h,w,0).
\end{equation}
Therefore, using the fundamental theorem of calculus thrice, and plugging (\ref{bound for the third derivative of Phi with respect to t}), we obtain
\begin{equation}
  \Phi(x,v,h,w,t)=
  \int_{0}^{t}\int_{0}^{s}\int_{0}^{\nu}\frac{\partial^{3}\Phi}{\partial \tau^{3}}(x,v,h,w,\tau) d\tau d\nu ds \leq \frac{C_0 t^{3}}{3}.
\end{equation}
Since for every $w_t\in TM_{\exp_{x}(tv)}$ with $\|w_t\|=R$ there exists $w\in T_xM$ with $\|w\|=R$ and $P_{x,v,w}(t)=w_t$, this implies that
$$
\sup_{w_t\in TM_{\exp_x(tv)}, \|w_t\|=R}\langle J_{x,v,h}(t)- t P_{x,v,h}(t) \, , \, w_t \rangle\leq \frac{C_0 t^{3}}{3},
$$
and therefore that
\begin{equation}
\|J_{x,v,h}(t)- t P_{x,v,h}(t)\| \leq \frac{C_0 t^{3}}{3R}.
\end{equation}
Now, for any given $x,y\in B(x_0, R^{2}/2)$ with $x\neq y$, we set
$$
v:=R\frac{\exp_{x}^{-1}(y)}{\|\exp_{x}^{-1}(y)\|}, \,\,\, t:=\frac{d(x,y)}{R},
$$
and we note that $\|v\|=R$, $0<t\leq R$.
For every $h\in T_xM$ with $\|h\|=R$ we then have
\begin{eqnarray*}
  & & \|d\exp_{x}\left(\exp_{x}^{-1}(y)\right)(h)-L_{xy}(h)\|=\frac{1}{t}
  \|d\exp_{x}\left(tv\right)(th)-tL_{x\exp_{x}(tv)}(h)\|=\\
  & & \frac{1}{t}\| J_{x,v,h}(t)-tP_{x,v,h}(t)\|\leq \frac{1}{t}\frac{C_0 t^{3}}{R}=\frac{ C_0 d(x,y)^{2}}{R^{3}},
\end{eqnarray*}
and taking the sup over those $h\in T_xM$ with $\|h\|=R$ we deduce that
\begin{equation}
  \|d\exp_{x}\left(\exp_{x}^{-1}(y)\right)-L_{xy}\|\leq \frac{C_0 d(x,y)^{2}}{R^{4}},
\end{equation}
which yields the inequality in the statement for $C=C_0/R^{4}$, $r=R^{2}/2$.
\end{proof}

\medskip

\begin{corollary}\label{estimate for composition of Lxy and diff exp}
Let $M$ be a Riemannian manifold (possibly infinite dimensional). For every $x_0\in M$ there exist $r>0$ and $C>0$ such that
$$
\| d(\exp_{x}^{-1})(y) \circ L_{xy}-I\|_{\mathcal{L}\left(T_xM, T_xM\right)}\leq C d(x,y)^{2}
$$
for every $x,y\in B(x_0, r)$.
\end{corollary}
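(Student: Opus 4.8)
The plan is to deduce this from Proposition \ref{estimate of the difference between parallel transport and diff of exp} by a short perturbation argument, using that $d(\exp_{x}^{-1})(y)$ is nothing but the inverse of the operator already estimated there. Indeed, for $x,y$ in a small enough ball $B(x_0, r)$ the map $\exp_{x}$ is a diffeomorphism from a neighborhood of $0$ in $T_xM$ onto a neighborhood of $x$ containing $y$, so by the inverse function theorem, writing $A:=d\exp_{x}\left(\exp_{x}^{-1}(y)\right):T_xM\to T_yM$, the operator $A$ is invertible and $A^{-1}=d(\exp_{x}^{-1})(y):T_yM\to T_xM$. Thus the quantity we must bound is precisely $A^{-1}\circ L_{xy}-I$, an operator on $T_xM$.

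The first step is to record the elementary identity $A^{-1}\circ L_{xy}-I = A^{-1}\circ L_{xy}-A^{-1}\circ A = A^{-1}\circ(L_{xy}-A)$, so that, taking operator norms,
$$
\|A^{-1}\circ L_{xy}-I\|\leq \|A^{-1}\|\,\|L_{xy}-A\|.
$$
The second factor is exactly what Proposition \ref{estimate of the difference between parallel transport and diff of exp} controls: after possibly shrinking $r$, there is $C_1>0$ with $\|L_{xy}-A\|\leq C_1 d(x,y)^{2}$ for all $x,y\in B(x_0,r)$. It then only remains to bound $\|A^{-1}\|$ uniformly for $x,y$ near $x_0$. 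Since $L_{xy}$ is a linear isometry, $\|L_{xy}^{-1}\|=1$, and factoring $A=L_{xy}\bigl(I+L_{xy}^{-1}(A-L_{xy})\bigr)$ gives $\|L_{xy}^{-1}(A-L_{xy})\|\leq \|A-L_{xy}\|\leq C_1 d(x,y)^{2}$. Choosing $r$ small enough that $C_1 d(x,y)^{2}<\tfrac12$ throughout $B(x_0,r)\times B(x_0,r)$ (which holds once $4C_1 r^{2}<\tfrac12$, as $d(x,y)<2r$ there), the Neumann series shows $A$ is invertible with $\|A^{-1}\|\leq (1-C_1 d(x,y)^{2})^{-1}\leq 2$. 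Combining this with the displayed inequality yields
$$
\|A^{-1}\circ L_{xy}-I\|\leq 2C_1 d(x,y)^{2},
$$
which is the claimed estimate with $C=2C_1$ and this choice of $r$.

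The argument is essentially routine; the only point needing a little care is making the bound on $\|A^{-1}\|$ uniform in $x,y$, and this is exactly where it matters that the Proposition gives quadratic (rather than merely $o(1)$) closeness of $A$ to the isometry $L_{xy}$. That quantitative closeness makes the Neumann perturbation bound immediate and transfers the rate $d(x,y)^{2}$ verbatim to the inverse, so no separate Jacobi-field analysis is required for the corollary.
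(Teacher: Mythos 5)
Your proof is correct and takes essentially the same route as the paper's: both rest on the factorization $d(\exp_{x}^{-1})(y)\circ L_{xy}-I = B_{xy}^{-1}\circ\left(L_{xy}-B_{xy}\right)$ (where $B_{xy}=d\exp_{x}\left(\exp_{x}^{-1}(y)\right)$), the quadratic estimate of Proposition \ref{estimate of the difference between parallel transport and diff of exp}, and a uniform bound $\|B_{xy}^{-1}\|\leq 2$ near $x_0$. The only (harmless) difference is how that last bound is obtained: the paper invokes continuity of $(x,y)\mapsto B_{xy}^{-1}$ together with $B_{x_0 x_0}=I$, whereas you derive it, along with the invertibility of $B_{xy}$, self-containedly via a Neumann series applied to the quadratic estimate itself.
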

\begin{proof}
Let us denote $B_{xy}=d\exp_{x}\left(\exp_{x}^{-1}(y)\right)$. We know that ${B_{xy}}^{-1}=d(\exp_{x}^{-1})(y)$ is continuous with respect to $x,y$, and $B_{x_0 x_0}=I={B_{x_0 x_0}}^{-1}$, hence there exists $r>0$ such that $\|{B_{xy}}^{-1}\|\leq 2$ whenever $x,y\in B(x_0, r)$. We may also assume $r$ is smaller than the $r$ in the statement of the preceding Proposition, so that we also have $\|L_{xy}-B_{xy}\|\leq C d(x,y)^{2}$ for every $x,y\in B(x_0, r)$. Since $L_{xy}$ is an isometry with inverse $L_{yx}$ we then have
\begin{eqnarray*}
& & \| d(\exp_{x}^{-1})(y) \circ L_{xy}-I\|=\|{B_{xy}}^{-1}\circ L_{xy}-I\|=\\
& & \| {B_{xy}}^{-1}\circ \left(L_{xy}-B_{xy}\right) \|\leq
2 \| L_{xy}-B_{xy}\|\leq 2C d(x,y)^{2}
\end{eqnarray*}
for all $x,y\in B(x_0, r)$.
\end{proof}

\medskip

\section{Semiconcavity, semiconvexity, and Lipschitzness of gradients}

\medskip

The next Proposition is well known and tells us that functions which are locally semiconvex and locally semiconcave are continuously differentiable.

\begin{proposition}\label{f is C1}
Let $M$ be a Riemannian manifold, $B\subset M$ an open convex set, and
$f:B\to\mathbb{R}$ a continuous function. Then $f$ is $C^1$ if and only if there exist
two $C^1$ functions $g,h:B\to\mathbb{R}$ such that $f+g$ is convex and $f-h$ is concave.
\end{proposition}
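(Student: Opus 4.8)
The forward implication is immediate: if $f\in C^1(B)$, then $g:=-f$ and $h:=f$ are $C^1$ on $B$, while $f+g\equiv 0$ and $f-h\equiv 0$ are simultaneously convex and concave along every geodesic segment contained in $B$. So the whole content lies in the converse, which I would prove by showing that the hypotheses force $f$ to be Fr\'echet differentiable at every point of $B$, with a continuous differential. Write $\phi:=f+g$ (convex) and $\psi:=f-h$ (concave), so that $f=\phi-g=\psi+h$.

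The plan is to work with the first-order sub- and super-differentials
\[
D^-f(p)=\Big\{\zeta\in T_p^*M:\ \liminf_{v\to 0}\frac{f(\exp_p v)-f(p)-\langle\zeta,v\rangle}{\|v\|}\ge 0\Big\},
\]
and the analogous $D^+f(p)$ defined with $\limsup\le 0$. First I would record the two standard nonsmooth-analysis facts that I need: a geodesically convex function has a nonempty subdifferential at every interior point of its domain (a supporting functional is produced from the one-sided directional derivatives of $\phi$ along the geodesics through $p$), and dually a concave function has a nonempty superdifferential. Since $g,h\in C^1(B)$ and these operators are additive under $C^1$ perturbations, this yields $D^-f(p)=D^-\phi(p)-dg(p)\neq\emptyset$ and $D^+f(p)=D^+\psi(p)+dh(p)\neq\emptyset$ for every $p\in B$.

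Next I would prove the elementary squeeze lemma that the simultaneous nonemptiness of $D^-f(p)$ and $D^+f(p)$ forces differentiability at $p$. Indeed, if $\zeta\in D^-f(p)$ and $\xi\in D^+f(p)$, then for each $\varepsilon>0$ and all sufficiently small $v\in T_pM$ one has $f(\exp_p v)-f(p)-\langle\zeta,v\rangle\ge-\varepsilon\|v\|$ and $f(\exp_p v)-f(p)-\langle\xi,v\rangle\le\varepsilon\|v\|$; subtracting gives $\langle\zeta-\xi,v\rangle\le 2\varepsilon\|v\|$ for all small $v$ and all directions, hence $\zeta=\xi$, and then the two inequalities combine into $|f(\exp_p v)-f(p)-\langle\zeta,v\rangle|\le\varepsilon\|v\|$, i.e.\ $f$ is differentiable at $p$ with $df(p)=\zeta$. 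Thus $f$ is differentiable throughout $B$.

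It remains to show that $df$ is continuous, and here convexity is used a second time. Once $f$ is known to be differentiable everywhere, so is $\phi=f+g$, whence $\phi$ is a geodesically convex function whose subdifferential is the singleton $\{d\phi(p)\}$ at every $p\in B$. As convex functions are locally Lipschitz, the gradients $\nabla\phi$ are locally bounded; if $p_n\to p$, then any accumulation point of $\nabla\phi(p_n)$ (after parallel translation to $T_pM$) is again a subgradient of $\phi$ at $p$, obtained by passing to the limit in the subgradient inequality, and so must equal $\nabla\phi(p)$ by single-valuedness. Therefore $\nabla\phi$ is continuous, and $\nabla f=\nabla\phi-\nabla g$ is continuous as well, giving $f\in C^1(B)$. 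I expect the main obstacle to be the careful handling of the two Riemannian nonsmooth-analysis inputs, namely the existence of subgradients for geodesically convex functions and the upgrade from pointwise differentiability of a convex function to genuine $C^1$ regularity; both are standard but require expressing everything through the exponential charts $\exp_p$ and controlling them uniformly on small balls.
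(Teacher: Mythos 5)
Your proposal is correct and follows essentially the same route as the paper: the paper likewise deduces that $f=(f+g)-g$ is (Fr\'echet) subdifferentiable and $f=(f-h)+h$ is superdifferentiable, concludes differentiability from the simultaneous sub- and super-differentiability, and then upgrades to $C^1$ via the fact that a differentiable convex function ($f+g$) is automatically $C^1$, citing \cite{AFL2} and \cite[Proposition 3.8]{AF} for exactly the two nonsmooth-analysis inputs you prove by hand. The only difference is that you supply proofs of those cited lemmas (the squeeze argument and the continuity of $\nabla\phi$ via local Lipschitzness and single-valuedness of the subdifferential), which the paper delegates to references.
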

\begin{proof}
We only need to prove the "if" part. We will use some basic facts about Fr\'echet subdifferentials on Riemannian manifolds (we refer the reader to \cite{AFL2} for an introduction to this topic). The function $f+g$ is subdifferentiable since it is convex and continuous, hence
$f=(f+g)-g$ is subdifferentiable too. On the other hand, $f-h$ is superdifferentiable since it is concave
and continuous, and consequently $f=(f-h)+h$ is also superdifferentiable. We deduce that $f$ is differentiable
on $B$. Finally, because $f+g$ is differentiable and convex, we deduce (see \cite[Proposition 3.8]{AF}) that
$f+g$ is $C^1$, and therefore so is $f$.
\end{proof}

As we are about to see, much more is true.
\begin{proposition}\label{semiconvex and semiconvave implies C11}
Let $M$ be a Riemannian manifold.  If a function $f:M\to\R$ is both locally $C$-semiconvex
and locally $C$-semiconcave, then $f\in C^{1,1}(M)$, with
$\textrm{Lip}(\nabla f)\leq 12C$.\footnote{In Theorem
\ref{characterizations of Lipschitz gradients} below we will show that if
$\textrm{dim}(M)<\infty$ then one has
the following sharp estimation: $\textrm{Lip}(\nabla{f})\leq 2C$.}
\end{proposition}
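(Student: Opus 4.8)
The plan is to reduce the statement to a two-sided first-order estimate of the form $|f(\exp_p v)-f(p)-\langle\nabla f(p),v\rangle|\le C\|v\|^2$, and then to extract the Lipschitz bound on $\nabla f$ from it by a test-point argument. First I would record that $f\in C^1(M)$: near any $x_0$, local $C$-semiconvexity and $C$-semiconcavity say that, for a fixed $y_0$ in a small convex ball $B(x_0,r)$ with $r<\min\{i(x_0),c(x_0)\}$, the function $x\mapsto f(x)+Cd(x,y_0)^2$ is convex and $x\mapsto f(x)-Cd(x,y_0)^2$ is concave; since $d(\cdot,y_0)^2$ is smooth there, Proposition \ref{f is C1} applies and yields $f\in C^1$.

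Next I would establish the key estimate. Fix $p$ near $x_0$ and use local $C$-semiconvexity with $y_0=p$: the function $g(w)=f(w)+Cd(w,p)^2$ is convex on a ball about $p$, so $t\mapsto g(\gamma(t))$ is convex along the constant-speed geodesic $\gamma(t)=\exp_p(tv)$, where $v=\exp_p^{-1}(z)$. The supporting-line inequality $g(\gamma(1))\ge g(\gamma(0))+(g\circ\gamma)'(0)$, together with $d(\gamma(t),p)=t\|v\|$ and the fact that the derivative of $g\circ\gamma$ at $t=0$ equals $\langle\nabla f(p),v\rangle$, gives $f(\exp_p v)-f(p)-\langle\nabla f(p),v\rangle\ge -C\|v\|^2$. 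Running the same computation with the concave function $f-Cd(\cdot,p)^2$ gives the reverse inequality, so that
$$|f(\exp_p v)-f(p)-\langle\nabla f(p),v\rangle|\le C\|v\|^2$$
for all $p$ near $x_0$ and all sufficiently small $v\in T_pM$.

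The heart of the proof is to pass from this to a bound on $\|\nabla f(x)-L_{yx}\nabla f(y)\|$. For $x,y$ close I would introduce a free test point $z=\exp_x(w)$ with $\|w\|$ comparable to $d(x,y)$, and combine the upper estimate at $x$ (applied to $w$) with the lower estimate at $y$ (applied to $w'=\exp_y^{-1}(z)$), using in addition the estimate at $y$ applied to $s=\exp_y^{-1}(x)$ to eliminate the term $f(x)-f(y)$. This produces
$$\langle\nabla f(y),w'-s\rangle-\langle\nabla f(x),w\rangle\le C(d(x,y)^2+\|w\|^2+\|w'\|^2).$$
Now $w'-s$ and $w$ both represent the displacement from $x$ to $z$, read respectively in $T_yM$ and $T_xM$; using Corollary \ref{estimate for composition of Lxy and diff exp} and the joint smoothness of the exponential map, I would replace $w'-s$ by $L_{xy}w$ up to an error that is $O(d(x,y)^2\|w\|)+o(\|w\|^2)$ as $d(x,y)\to0$, uniformly near $x_0$. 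Since $\langle\nabla f(y),L_{xy}w\rangle=\langle L_{yx}\nabla f(y),w\rangle$, choosing $w$ in the direction of $u:=\nabla f(x)-L_{yx}\nabla f(y)$ with $\|w\|=d(x,y)$ and bounding $\|w'\|\le d(x,y)+\|w\|$ turns the inequality into $\|u\|\le 12\,C\,d(x,y)+o(d(x,y))$ (the constant is not optimized by this crude comparison). Shrinking the ball so that the error terms are dominated then shows, for every $\varepsilon>0$, that $\nabla f$ is $(12C+\varepsilon)$-Lipschitz in the sense of Definition \ref{definition of Lipschitzness of a gradient} on small balls, whence $\mathrm{Lip}(\nabla f)\le 12C$.

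The main obstacle is exactly this last step: comparing the two exponential charts at $x$ and $y$ and controlling the discrepancy between $\exp_y^{-1}\circ\exp_x$ and parallel transport uniformly, especially in the infinite-dimensional case where compactness arguments are unavailable. This is precisely what the $O(d(x,y)^2)$ estimates of Section 4 supply, and the bluntness of the test-point comparison (rather than any essential geometric feature) is what produces the non-sharp constant $12C$; a finer, chart-based analysis in finite dimensions recovers the optimal $2C$ announced in the footnote.
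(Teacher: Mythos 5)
Your proposal is correct, and it rests on the same two pillars as the paper's proof --- Proposition \ref{f is C1} for the $C^1$ step and Corollary \ref{estimate for composition of Lxy and diff exp} for the chart comparison --- but the middle of the argument is organized genuinely differently. The paper never forms your symmetric two-sided bound: it passes to the single convex function $g=f+\varphi$ with $\varphi=Cd(\cdot,x_0)^2$, first bounds the second difference $g(\exp_x h)-2g(x)+g(\exp_x(-h))\le A\|h\|^2$ (with $A=\frac{24}{11}C$ a Hessian bound for $\varphi$), then exploits the supporting inequalities of convexity at the test point $z=\exp_y(L_{xy}h)$, and controls $\|w-h-v\|$ (where $\exp_x w=z$, $v=\exp_x^{-1}y$) by a genuinely cubic bound $K_2(\|h\|+\|v\|)^3$, obtained from a third-order Taylor expansion of $\phi_x$ in which Corollary \ref{estimate for composition of Lxy and diff exp} kills the mixed second derivative at the origin; the final constant is $\frac{9}{2}A+A=\frac{11}{2}A=12C$ after adding back $\nabla\varphi$. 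You instead work on $f$ directly with the two-sided estimate $|f(\exp_p v)-f(p)-\langle\nabla f(p),v\rangle|\le C\|v\|^2$ at the two base points $x,y$ plus a free test point $z=\exp_x(w)$; this dispenses with $\varphi$ and the second-difference step, needs only a second-order (not third-order) expansion of the transition map $\exp_y^{-1}\circ\exp_x$, and with $\|w\|=d(x,y)$ and $\|w'\|\le 2d(x,y)$ your bookkeeping in fact delivers the constant $6C$, better than the stated $12C$. Two details you should make explicit when writing this up. First, for the $o(\|w\|^2)$ error you need that $D_w^2\left(\exp_y^{-1}\circ\exp_x\right)$ vanishes identically on the diagonal $y=x$ (because $\exp_x^{-1}\circ\exp_x=\mathrm{id}$), so that joint smoothness of the exponential map yields \emph{uniform smallness}, not just boundedness, of the quadratic coefficient near $(x_0,x_0,0)$; mere boundedness would leave an additive constant in the Lipschitz bound not controlled by $C$, and this is exactly the difficulty the paper sidesteps by going to third order, where boundedness of $D^3\phi_x$ suffices. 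Second, the test direction must be $w$ parallel to $L_{yx}\nabla f(y)-\nabla f(x)$ (i.e.\ to $-u$ in your notation), or else take the supremum over all $w$ with $\|w\|=d(x,y)$ as the paper does; with $w$ along $+u$ the inequality is vacuous. Both points are routine to repair, and since nothing in your argument uses finite dimensionality --- only continuity of $\nabla f$ and joint smoothness of $\exp$ near $(x_0,0)$ --- it proves the proposition in the same generality as the paper.
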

\begin{proof}
Fix $x_0\in M$. By continuity of the curvature tensor, it is clear that the sectional curvature of $M$ is locally bounded, so there exists $R>0$ such that the sectional curvature of $M$ is bounded by some $K_0$ on the ball $B(x_0, 3R)$.  Then, if $\varphi$ denotes the function $\varphi(x)=C d(x, x_0)^{2}$, defined on this ball, we know that $$|D^2\varphi(z)|\leq
2C\nu \left(\sqrt{K_0} d(z,x_0)\right),$$ where $\nu (t)=t\frac{e^t+e^{-t}}{e^t-e^{-t}}$. As $\nu$ is increasing,
a bound for the Hessian of $\varphi$ on the ball $B(x_0, 3R)$ is $2C\nu \left(\sqrt{K_0} 3R\right)$.  Moreover, since this quantity tends to $2C$ as $R$ goes to $0$, we can assume that $R$ is small enough so that
$$
|D^2\varphi(z)|\leq A:=\frac{24}{11}C \,\,\, \textrm{ for every } z\in B(x_0, 3R).
$$
Note that $A$ does not depend on $x_0$, and that $\nabla\varphi$ is $A$-Lipschitz (according
to definition \ref{definition of Lipschitzness of a gradient}) on the ball $B(x_0, 3R)$ (this
is an easy exercise; if in doubt, see the proof that $(7)\implies (1)$ in Theorem \ref{characterizations of Lipschitz gradients}
below).

We may also assume $R$ is small enough so that $2R<\min\{i(x_0), c(x_0)\}$ and the functions
$\varphi+f,\varphi-f :B(x_0,3R)\to \mathbb{R}$
are convex.

We will start by showing that that
\begin{equation}\label{estimation1}
(f+\varphi)(exp_x(h))-2(f+\varphi)(x)+(f+\varphi)(exp_x(-h))\leq A\|h\|^2
\end{equation}
provided that $x\in B(x_0,R)$ and $\|h\|\leq 2R$, $h\in T_xM$. In order to prove this inequality, observe that
$$
0\leq (\varphi+f)(exp_x(h))-2(\varphi+f)(x)+(\varphi+f)(exp_x(-h))
$$
and
$$
0\leq (\varphi-f)(exp_x(h))-2(\varphi-f)(x)+(\varphi-f)(exp_x(-h))
$$
since both $\varphi+f$ and $\varphi-f$ are convex. The second inequality implies
$$
f(exp_x(h))-2f(x)+f(exp_x(-h))\leq \varphi(exp_x(h))-2\varphi(x)+\varphi(exp_x(-h)).
$$
Hence, plugging this into the first inequality, we have
$$
0\leq (\varphi+f)(exp_x(h))-2(\varphi+f)(x)+(\varphi+f)(exp_x(-h))\leq
$$
$$
\leq 2(\varphi(exp_x(h))-2\varphi(x)+\varphi(exp_x(-h)))\leq A\|h\|^2,
$$
since $\varphi$ is convex and
$$
\varphi(exp_x(h))-\varphi(x)-\langle\nabla\varphi(x), h\rangle\leq \frac{1}{2} A\|h\|^2.
$$
Now observe that from (\ref{estimation1}) it follows that
\begin{equation}\label{estimation2}
(f+\varphi)(exp_x(h))-(f+\varphi)(x)-\langle\nabla(f+\varphi)(x), h\rangle\leq A\|h\|^2.
\end{equation}

We proceed to show that $f+\varphi$ is $C^{1,1}$ with $\textrm{Lip}\left(\nabla(f+\varphi)\right)\leq \frac{9}{2}A$. We already know that $f+\varphi$ is $C^1$ by Proposition \ref{f is C1}.

Let $x,y\in B(x_0,r_0)$, and $h\in T_xM$ with $\|h\|\leq 2r_0$, where $r_0$ will be fixed later on.
Let us set $v=\exp_{x}^{-1}(y)$.
The following inequality is a consequence of $f+\varphi$'s convexity:
$$
\langle\nabla(f+\varphi)(y), L_{xy}h\rangle- \langle\nabla(f+\varphi)(x), h\rangle\leq
$$
$$
\leq (f+\varphi)(exp_y(L_{xy}h))-(f+\varphi)(y)-\langle\nabla(f+\varphi)(x), h\rangle=
$$
$$
= \Bigl( (f+\varphi)(exp_y(L_{xy}h)-(f+\varphi)(x)-\langle\nabla(f+\varphi)(x), h+v\rangle \Bigr)
$$
$$
- \Bigl( (f+\varphi)(y)-(f+\varphi)(x)-\langle\nabla(f+\varphi)(x), v\rangle\Bigr).
$$
Since $f+\varphi$ is convex, the expression in the bottom line is less than or equal to $0$.
We deduce that, for $w\in T_xM$ with $exp_x(w)=exp_y(L_{xy}h)$,
$$
\langle\nabla(f+\varphi)(y), L_{xy}h\rangle- \langle\nabla(f+\varphi)(x), h\rangle\leq
$$
$$
\leq (f+\varphi)(exp_y(L_{xy}h))-(f+\varphi)(x)-\langle\nabla(f+\varphi)(x), h+v\rangle=
$$
$$
=(f+\varphi)(exp_{x}(w))-(f+\varphi)(x)-\langle\nabla(f+\varphi)(x), w\rangle+ \langle\nabla(f+\varphi)(x), w-h-v\rangle.
$$
Now, (\ref{estimation2}) and again the convexity of $f+\varphi$ allow us to deduce
$$
\langle\nabla(f+\varphi)(y), L_{xy}h\rangle -\langle\nabla(f+\varphi)(x), h\rangle\leq
$$
$$
\leq A\|w\|^2+\langle\nabla(f+\varphi)(x), w-h-v\rangle\leq
$$
$$
\leq A\|w\|^2+(f+\varphi)(exp_x(w-h-v))-(f+\varphi)(exp_x(0))\leq
$$
\begin{equation}\label{estimation3}
\leq A\|w\|^2+K_1\|w-h-v\|
\end{equation}
where $K_1$ is the Lipschitz constant of $(f+\varphi)\circ exp_x$ on $B(0,8r_0)$.

Our next step is to estimate $\|w\|$ and $\|w-h-v\|$. On the one hand, we have
\begin{equation}\label{estimation of w}
\|w\|=d(x,exp_y(L_{xy}h))\leq d(x,y)+d(y,exp_y(L_{xy}h))=\|v\|+\|h\|
\end{equation}
On the other hand, we claim:
\begin{claim}
There exist $r_0, K_2>0$ such that
$$
\|w-h-v\|\leq K_2 (\|v\|+\|h\|)^3
$$
for every $x,y\in B(x_0,r_0)$ and every $h$ with $\|h\|\leq 2r_0$.
\end{claim}
We put off the proof of the claim.
By (\ref{estimation3}), $(\ref{estimation of w})$ and the Claim, we deduce
$$
\langle\nabla(f+\varphi)(y), L_{xy}h\rangle- \langle\nabla(f+\varphi)(x), h\rangle\leq A(\|h\|+\|v\|)^2+K_1 K_2 (\|h\|+\|v\|)^3.
$$
We may assume that $r_0$ is small enough such that $$8K_1 K_2 r_0\leq\frac{1}{2}A.$$
Suppose now that $\|h\|=\|v\|=d(x,y)\leq 2r_0$. Then, dividing by $\|h\|$ and taking sup in the left term,
we obtain
$$
||L_{yx}\nabla(f+\varphi)(y)-\nabla(f+\varphi)(x)||\leq  4A\|v\|+8K_1 K_2\|v\|^2\leq
$$
$$
\leq 4A\|v\|+\frac{1}{2}A \|v\|=\frac{9}{2}A d(x,y).
$$
We conclude that $\nabla(f+\varphi)$ is $\frac{9}{2}A$-Lipschitz on $B(x_0, r_0)$. Since $\nabla\varphi$ is $A$-Lipschitz on this ball, $x_0$ is arbitrary and $A$ does not depend on $x_0$, it follows that $f\in C^{1,1}(M)$ with $\textrm{Lip}(\nabla f)\leq \frac{11}{2}A=12 C$.

\medskip

It only remains to prove the claim. Let us define a function $\psi _{xy}:T_xM\to T_xM$ by
$\psi _{xy}=exp_x^{-1}\circ exp_y\circ L_{xy}$.
We have
\begin{equation}\label{equality for w h v}
\|w-v-h\|=
\|\psi _{xy} (h)-\psi _{xy}(0)-h\|.
\end{equation}
Let us now define $\phi _x(h,v)=\psi _{xy}(h)-\psi _{xy}(0)-h$, and
$\Phi (s,t)=\phi _x(sh,tv)$.

The function $\Phi$ satisfies $\Phi (s,0)=\Phi (0,t)=0$ and consequently
$\frac{\partial \Phi}{\partial s}(s,0)=\frac{\partial \Phi}{\partial t}(0,t)=0$ for every $s,t$
small enough. This implies
$\frac{\partial ^2\Phi}{\partial s^2}(0,0)=\frac{\partial ^2\Phi}{\partial t^2}(0,0)=0$.
Moreover, by direct calculation, we have
$$
\frac{\partial \Phi}{\partial s}(s,t)=D\psi _{x,y_t}(sh)(h)-h
$$
where $y_t=exp_x(tv)$. Hence
\begin{eqnarray*}
& &
\frac{\partial \Phi}{\partial s}(0,t)=D\psi _{x,y_t}(0)(h)-h=D(exp_x^{-1}\circ exp_{y_t}\circ L_{x,y_t})(0)(h)-h=\\
& &=Dexp_x^{-1}(y_t)\circ Dexp_{y_t}(0)[L_{xy_t}h]-h=Dexp_x^{-1}(y_t)[L_{xy_t}h]-h=\\
& &
=[Dexp_x(tv)]^{-1}(L_{xy_t}h)-h,
\end{eqnarray*}
and using Corollary \ref{estimate for composition of Lxy and diff exp} we deduce that
$$
\frac{\partial ^2\Phi}{\partial s\partial t}(0,0)=
\lim_{t\to 0}\frac{\frac{\partial \Phi}{\partial s}(0,t)-\frac{\partial \Phi}{\partial s}(0,0)}{t}
=\lim_{t\to 0}\frac{1}{t}\frac{\partial \Phi}{\partial s}(0,t)=0.
$$
This implies $\phi _x(0,0)=0$, $D\phi _x(0,0)=0$, and $D^2\phi _x(0,0)=0$.
Hence by Taylor's Formula,
\begin{equation}\label{Taylor formula}
\phi _x(h,v)=\frac{1}{3!}\int_0^1(1-s)^3D^3\phi _x(sh,sv)(h,v)^3ds
\end{equation}
On the other hand, the theorem on the differentiability of the flow of an ODE implies that the mapping $(x, v, h)\mapsto \phi_x(h,v)$ is $C^{\infty}$, hence $D^{3}\phi_x(h,v)$
is continuous in $(x,v,h)$, and in particular is locally bounded. It follows that there exist $K_2, r_0 >0$ such that
\begin{equation}\label{estimate of Taylors reminder}
\frac{1}{3!}||D^3\phi _x(sh,sv)||\leq K_2
\end{equation}
for every $x,y\in B(x_0,r_0)$, $v=\exp_{x}^{-1}(y)$, $\|h\|\leq 2r_0$, and $s\in [0,1]$.
By combining $(\ref{equality for w h v})$, $(\ref{Taylor formula})$ and $(\ref{estimate of Taylors reminder})$ we conclude that
$$
\|w-v-h\|=\|\phi _x(h,v)\|\leq K_2(\|h\|+\|v\|)^3
$$
for every $x,y\in B(x_0,r_0)$, $\|h\|\leq 2r_0$.
\end{proof}

\medskip

\section{What is a $C^{1,1}$ function?}

\medskip

In this section we will prove Theorem \ref{characterizations of Lipschitz gradients}.
\begin{proof}
$(1)\implies (2)$ is obvious, and $(2)\implies (1)$ is an easy exercise.

\noindent $(1)\implies (3)$. Fix $x\in M$. By Proposition \ref{estimate of the difference between parallel transport and diff of exp} there exist $C', r'>0$ so that
$$
\|d\exp_{x}\left(\exp_{x}^{-1}(y)\right)-L_{xy}\|
\leq C' d(x,y)^{2}
$$
for every $x, y\in B(x_0, 3r')$. We have
$
\sup_{y\in B(x_0, 3r')}\|\nabla f(y)\|<\infty
$ by continuity of $\nabla f$.
Let $r>0$ be such that
$$
\|\nabla f(x)-L_{yx}\nabla f(y)\|\leq Cd(x,y)
$$ for every $x,y\in B(x_0, 2r)$.
By taking a smaller $r$ if necessary, we may assume that $$
r\leq\min\{ r', \frac{\varepsilon}{2C'\left(1+\sup_{y\in B(x_0, 3r')}\|\nabla f(y)\|\right)}\}.
$$
Then we have, for every $x\in B(x_0, r)$ and $v\in T_xM$ with $\|v\|\leq r$,
\begin{eqnarray*}
& &\left| f(\exp_{x}(v))-f(x)-\langle \nabla f(x), v\rangle\right|=\\
& &
\left|\int_{0}^{1}\langle \nabla f(\exp_{x}(tv)), d\exp_{x}(tv)(v)\rangle -
\langle \nabla f(x), v\rangle dt\right|\leq\\
& & \left|\int_{0}^{1}\langle L_{\exp_{x}(tv)x}\left(\nabla f(\exp_{x}(tv))\right) - \nabla f(x), v\rangle dt\right|+\\
& &+\int_{0}^{1}\|\nabla f(\exp_{x}(tv))\|\, \|L_{\exp_{x}(tv) x}-d\exp_{x}(tv)\|\, \|v\|dt\leq\\
& &\int_{0}^{1}Ct\|v\|^{2}dt+C' \|v\|^{3} \sup_{y\in B(x_0, 3r')}\|\nabla f(x)\| \leq\frac{C+\varepsilon}{2} \|v\|^{2}.
\end{eqnarray*}
\noindent $(3)\implies (4)$. Let $\varepsilon>0$ and $q>1$ be such that
$$
q\frac{C+\varepsilon}{2}\leq \frac{C'}{2}.
$$
Given $x_0\in M$, if we apply Lemma \ref{convexity lemma}(2) locally (replacing $M$ with a suitable ball of center $x_0$ where the sectional curvature remains bounded), we get an $R'>0$ so that the function
$$
B(x_0, R')\ni y\mapsto \frac{C'}{2} d(y, y_0)^{2}-\frac{C+\varepsilon}{2}d(y,x)^{2}
$$
is convex, for every $x, y_0\in B(x_0,R')$. We may assume that $R'<r$, where $r$ is as in $(3)$. Let us denote $\varphi(y)=\frac{C'}{2} d(y, y_0)^{2}$, and $\psi(y)=\varphi(y)-\frac{C+\varepsilon}{2}d(y,x)^{2}$. By $(3)$ and convexity of $\psi$ on $B(x_0, R')$ we have, for $x,y\in B(x_0, R')$,
\begin{eqnarray*}
& & f(y)-f(x)+\varphi(y)-\varphi(x)\geq\\
& &\langle\nabla f(x), \exp_{x}^{-1}(y)\rangle -\frac{C+\varepsilon}{2}d(x,y)^{2}+\varphi(y)-\varphi(x)=\\
& & \langle\nabla f(x), \exp_{x}^{-1}(y)\rangle +\psi(y)-\psi(x)\geq\\
& &\langle\nabla f(x), \exp_{x}^{-1}(y)\rangle + \langle\nabla \psi(x), \exp_{x}^{-1}(y)\rangle.
\end{eqnarray*}
This implies that for every $x\in B(x_0,R')$, $v\in T_zM$, $\|v\|=1$, the
function $t\mapsto (f+\varphi)(\exp_{x}(tv))$ is supported by an affine function
of $t$ on a small interval around $0$, which in turn means that $f+\varphi$ is locally
convex along geodesic segments contained in $B(x_0, R')$, hence convex on $B(x_0, R')$.
This shows that the function $f$ is locally $\frac{C'}{2}$-semiconvex for every $C'>C$.
The proof that $f$ is locally $\frac{C'}{2}$-semiconcave is completely analogous.

\medskip

\noindent $(4)\implies (1)$. This is, together with $(4)\implies (5)$, the most delicate
part of the proof. All the previous implications, as well as $(5)\implies (1)$, hold for infinite dimensional manifolds as
well, with the same proofs, but now we will have to use Bangert's generalization of Alexandroff's theorem
on twice differentiability of semiconvex functions defined on finite dimensional
Riemannian manifolds.
According to the results of \cite{Bangert}, the locally semiconvexity of $f$ implies that $f$
admits a Hessian almost everywhere on $M$, in the sense that for almost every $x\in M$
there exists a self-adjoint linear operator $H_x:T_xM\to T_xM$ such that
\begin{equation}\label{AlexandroffBangert}
\lim_{v\to 0}\frac{\|L_{\exp_{x}(v)x}\nabla f(\exp_{x}(v))-\nabla f(x)-H_{x}v\|}{\|v\|}=0
\end{equation}
(recall that in our situation $f\in C^{1}$, so $\nabla f$ exists everywhere, and the
subgradients of $f$ reduce to the usual gradient of $f$ at every point). Of course, if $f\in C^{2}(M)$ then this notion of
Hessian coincides with the usual one.
It is easily seen that $(\ref{AlexandroffBangert})$ implies
\begin{equation}
f(\exp_{x}(v))-f(x)-\langle \nabla f(x), v\rangle -\frac{1}{2}\langle H_x(v),v\rangle=
o\left( \|v\|^{2}\right)
\end{equation}
for every $x$ where $(\ref{AlexandroffBangert})$ holds. We will denote $H_{x}=H_{x}(f)$
if the function $f$ is not understood. Bangert also proved that a
semiconvex function $f$ is convex if and only if $H_x(f)\geq 0$ for every $x$ where
$(\ref{AlexandroffBangert})$ holds.

Therefore, if $(4)$ holds then for each $x_0\in M$ there exists $R>0$ such that $f+\varphi$
is convex and $f-\varphi$ is concave on the ball $B(x_0, R)$, where
$\varphi(x)=\frac{C'}{2}d(x,x_0)^{2}$. We may assume that $R$ is small enough so that
the sectional curvature of $M$ is bounded by some positive number $K_0$ on the ball
$B(x_0, R)$, and then we may take
an $r\in (0,R)$ sufficiently small so that
\begin{equation}\label{bound for Hessian of distance in Bangert part of the proof}
\frac{ r\sqrt{K_0}\cosh\left(r\sqrt{K_0}\right)}{\sinh\left(r\sqrt{K_0}\right)} C'
\leq C'+\varepsilon.
\end{equation}
According to Bangert's results we then
have that $H_{x}(f+\varphi)\geq 0$ and $H_{x}(f-\varphi)\leq 0$ for every $x\in B(x_0, r)$,
which implies that $-H_{x}(\varphi)\leq H_{x}(f)\leq H_{x}(\varphi)$, and since
$(\ref{bound for Hessian of distance in Bangert part of the proof})$ provides a bound for
$H_{x}(\varphi)$ on $B(x_0, r)$, we deduce that
\begin{equation}\label{estimation of Hxf in 4 implies 1}
\|H_{x}(f)\|\leq C'+\varepsilon
\end{equation}
for every $x\in\textrm{Diff}^{2}(f)$, where we denote
$\textrm{Diff}^{2}(f)=\{x \, : \, (\ref{AlexandroffBangert})$ \textrm{ holds}\}.
Now, if for a geodesic segment $c(t)=\exp_{x}(tv)$ we have that
$c(t)\in \textrm{Diff}^{2}(f)$ for almost every $t$, then it is easy to see that,
for every $h\in T_xM$ with $\|h\|=1$, if we denote the parallel translation of $h$ along
$c(t)$ by $P_{h}(t)$, the function
$t\mapsto \langle \nabla f(c(t)), P_h(t)\rangle$ is absolutely continuous and
$$
\langle H_{c(t)}(f)(c'(t)), P_h(t)\rangle=\frac{d}{dt}\langle\nabla f(c(t)), P_h(t)\rangle,
$$
which by integration implies that
\begin{eqnarray}\label{integration of hessian}
& &\langle L_{c(1) x}\left(\nabla f(c(1))\right)-\nabla f(c(0)), h\rangle
=\\
& &
\langle\nabla f(c(1)), P_h(1)\rangle-\langle\nabla f(c(0)), P_h(0)\rangle=\\
& &\int_{0}^{1}\langle H_{c(t)}(f)(c'(t)), P_h(t)\rangle dt\leq\int_{0}^{1}\|H_{c(t)}(f)\|\, \|v\|dt
\leq \left(C'+\varepsilon\right)\|v\|,
\end{eqnarray}
hence, by taking sup on those $h$,
\begin{equation}\label{end of estimation of Lipschitzness by Hessian}
\| L_{\exp_{x}(v) x}\left(\nabla f(\exp_{x}(v))\right)-\nabla f(x)\|\leq
\left(C'+\varepsilon\right)\|v\|.
\end{equation}
On the other hand, since $\textrm{Diff}^{2}(f)$ has full measure, it is immediately seen,
by using Fubini's theorem, that for almost every $v$ one has $\exp_{x}(tv)\in
\textrm{Diff}^{2}(f)$ for almost every $t$. Therefore, if our geodesic segment $c(t)$
does not satisfy $c(t)\in \textrm{Diff}^{2}(f)$ for almost every $t$, then we can at least
take a sequence $(v_k)_{k\in\N}\subset T_xM$ such that $v=\lim_{k\to\infty}v_k$ and $c_k(t)=\exp_{x}(tv_k)$ does satisfy
$c_k(t)\in \textrm{Diff}^{2}(f)$ for almost every $t$, hence
$$
\| L_{\exp_{x}(v_k) x}\left(\nabla f(\exp_{x}(v_k))\right)-\nabla f(x)\|\leq
\left(C'+\varepsilon\right)\|v_k\|,
$$
which yields $(1)$ by taking the limit as $k\to\infty$, using the continuity
of $\nabla f$ and $y\mapsto L_{xy}$, and recalling that $\varepsilon>0$ and $C'>C$ are arbitrary.

\noindent $(4)\implies (5)$. In \cite{Bangert},
Bangert proved that if $f$ is convex for some metric $g$ in a manifold
$M$ then $f$ is locally semiconvex for any other metric $\tilde{g}$ in $M$. It follows
that for every $x\in M$ there exists $R>0$ such that the function $F:B_{T_xM}(0,R)\to\R$
defined by $F(u)=f(\exp_{x}(u))$ is semiconvex. Therefore, from what we have seen in
$(4)\implies (1)$ (applied to the manifold $B_{T_xM}(0,R)$), the gradient $\nabla F$ is
Lipschitz on $B_{T_xM}(0,R)$. Then by Rademacher's theorem $\nabla F$
is differentiable almost everywhere on $B_{T_xM}(0,R)$, and we only have to estimate
$\textrm{Lip}(\nabla F)$. The gradient $\nabla f$ is differentiable in Bangert's sense wherever
$\nabla F$ is differentiable in the usual sense. So, if we denote
$\sigma_{w,v}(t)=\exp_{x}(w+tv)$, we have, for every $w$ where $\nabla F$
is differentiable,
\begin{eqnarray*}
& &D^{2}F(w)(v)^{2}=\frac{d^{2}}{dt^{2}}F(w+tv)|_{t=0}=\\
& &\langle H_{x}(f)(\sigma_{w,v}'(0)),
\sigma_{w,v}'(0)\rangle+\langle\nabla f(\sigma_{w,v}(0)),
\nabla_{\sigma_{w,v}'(0)}\sigma_{w,v}'(0)\rangle.
\end{eqnarray*}
Since the function $(w,v)\mapsto \|\nabla_{\sigma_{w,v}'(0)}\sigma_{w,v}'(0)\|$ is continuous and vanishes on $w=0$,
by compactness of $\{v\in T_xM : \|v\|\leq 1\}$ it immediately follows that, given $\varepsilon>0$,
there exists $r>0$ so that
$$
\|\nabla_{\sigma_{w,v}'(0)}\sigma_{w,v}'(0)\|\leq
\frac{\varepsilon}{1+\sup_{y\in B(x_0, R)}\|\nabla f(y)\|}
$$
for all $w\in B_{T_xM}(0,r)$ and every $v\in T_xM$ with $\|v\|\leq 1$.
By combining the two last
chains of inequalities and using $(\ref{estimation of Hxf in 4 implies 1})$,
we get
$$
|D^{2}F(w)(v)^{2}|\leq |\langle H_{x}(f)(\sigma_{w,v}'(0)),
\sigma_{w,v}'(0)\rangle|+\varepsilon\leq C'+2\varepsilon
$$
for almost every $w\in B_{T_xM}(0,r)$, and for every $v\in T_xM$ with $\|v\|\leq 1$.
Hence $\|D(\nabla F)\|\leq C'+2\varepsilon$ almost everywhere on $B_{T_xM}(0,r)$.
Since $\nabla F$ belongs to the Sobolev space $W^{1, \infty}\left(B_{T_xM}(0,r)\right)$, we
conclude that $\textrm{Lip}(\nabla F)=\|D(\nabla F)\|_{\infty}\leq C'+2\varepsilon$,
which shows $(5)$.

\noindent $(5)\implies (6)$ is trivial.

\noindent $(6)\implies (2)$. Using Corollary \ref{estimate for composition of Lxy and diff exp},
we have
\begin{eqnarray*}
& & |\langle L_{yx}\nabla f(y) -\nabla f(x), h\rangle|=\\
& &|\langle\nabla F(\exp_{x}^{-1}(y)),
d(\exp_{x}^{-1})(y)\circ L_{xy}(h)\rangle-\langle \nabla F(0), h\rangle|\leq\\
& & \langle\nabla F(\exp_{x}^{-1}(y)),
d(\exp_{x}^{-1})(y)\circ L_{xy}(h)-h\rangle|+|\langle \nabla F(\exp_{x}^{-1}(y))-\nabla F(0), h\rangle|\leq\\
& &
\|\nabla F(\exp_{x}^{-1}(y))\| \| d(\exp_{x}^{-1})(y)-L_{yx}\|\,\|h\|+
\|\nabla F(\exp_{x}^{-1}(y))-\nabla F(0)\|\,\|h\|\leq\\
& & O(1)\|h\| O\left(d(x,y)^{2}\right)+ (C+\varepsilon)\|h\|d(x,y).
\end{eqnarray*}
By taking sup on $\{h\in T_xM : \|h\|=1\}$ we get
$$
\|L_{yx}\nabla f(y)-\nabla f(x)\|\leq O\left(d(x,y)^{2}\right)+ (C+\varepsilon)d(x,y).
$$
It follows that
$$
\limsup_{t\to 0^{+}}\frac{1}{t}\|L_{\exp_{x}(tv)x}\left(\nabla f(\exp_{x}(tv))\right)
-\nabla f(x)\|\leq C+\varepsilon,
$$
from which $(2)$ is deduced by letting $\varepsilon$ go to $0$.

We have thus proved the equivalence between statements $(1), (2), ..., (6)$. That $(7)$ follows from $(1)$
is an easy exercise, see
\cite[Exercise 5 and Definition 1.5 in Section 1 of Chapter II]{Sakai}. Conversely one can
deduce $(1)$ from $(7)$ by the same argument as in
$(\ref{integration of hessian})-(\ref{end of estimation of Lipschitzness by Hessian})$,
with the advantage that now we do not have to rely on Bangert's theorem, but on the assumption
that $H_{x}(f)=D^{2}f(x)$ exists for every $x$. Thus, it is worth noting that the equivalence $(1)\iff (7)$ holds for infinite
dimensional manifolds $M$ as well, when $f\in C^{2}(M)$.

Assume now that $M$ is of bounded curvature with $i(M)>0$, $c(M)>0$, and let us prove the
equivalence of $(1), ..., (6)$ to $(4')$ and to $(1')$. Obviously we always have $(4')\implies (4)$ and $(1')\implies (1)$.
One can show that $(4')\implies (1')$ by exactly the same argument we used above in $(4)\implies (1)$,
just noticing that $R$ and $r$ are independent of $x_0$ provided we have a global bound $K_0$
for the sectional curvature of $M$. So we only have to prove that $(4)\implies (4')$.
Given $C''>C'>C>0$, we can choose $q>1$ with $C''\geq qC'$, and use Lemma
\ref{convexity lemma}(2) to find $R>0$ so that, for every $x_0\in M$
and $y_{0}\in B(x_0, R)$,
$$
B(x_0, R)\ni x\mapsto \frac{C''}{2}d(x,x_0)^{2}-\frac{C'}{2}d(x, y_{0})^2 \textrm{ is
convex}.
$$
Now, for every $y_0\in B(x_0, R)$, by $(4)$ there exists $r>0$ such that
$f+\frac{C'}{2}d(\cdot, y_0)^{2}$ is convex on $B(y_0, r)$. Therefore the function
$$
f+\frac{C''}{2}d(\cdot, x_0)^{2}=\left(f+\frac{C'}{2}d(\cdot, y_0)^{2}\right)+
\left(\frac{C''}{2}d(\cdot, x_0)^{2}-\frac{C'}{2}d(\cdot, y_{0})^2\right),
$$
being a sum of two convex functions, is convex on $B(y_0, r)$. Since $y_0\in B(x_0, R)$ is arbitrary,
this shows that $f+\frac{C''}{2}d(\cdot, x_0)^{2}$ is locally convex on $B(x_0, R)$, hence convex
on $B(x_0, R)$, for every $x_0\in M$, and since $R$ is independent of $x_0$ this establishes
$(4')$.
\end{proof}

\begin{remark}
{\em From the above proof it is clear that when $M$ is infinite dimensional the implications
$(1)\iff (2)\implies (3)\implies (4)$ and $(5)\implies (6)\implies (1)$ remain true, and any of these
conditions is equivalent to $(7)$ if $f\in C^{2}(M)$. We also have that $(4)$ implies that $f$
is $C^{1,1}(M)$ with $\textrm{Lip}(\nabla f)\leq 6C'$,
by Proposition \ref{semiconvex and semiconvave implies C11}. }
\end{remark}

\medskip

\section{Proof of Theorem \ref{main theorem}}

We start by establishing the local semiconvexity of the regularizations
$(f_{\lambda})^{\mu}$. This is a rather
straightforward consequence of Proposition \ref{localization} and of the easy part
of Lemma \ref{convexity lemma}.

\begin{proposition}\label{semiconvexity}
Let $M$ be a Riemannian manifold with sectional curvature $K$ such that $-K_0\leq K\leq K_0$ for some $K_0>0$, and such that $i(M)>0, c(M)>0$. Let $f:M\to\R$, $h:M\to \R$ be functions such that
$$
f(x)\geq-\frac{c}{2}\left(1+d(x, x_0)^{2}\right), \textrm{ and } h(x)\leq \frac{c}{2}\left(1+d(x, x_0)^{2}\right)
$$
for all $x\in M$ and some $c>0$. Let $q$ be a number with $q>1$. Then we have:
\begin{enumerate}
\item If $f$ is bounded on $M$ then there exists $\lambda_0>0$ (depending on $K_0$, $\|f\|_{\infty}$ and $q$) such that for every $\lambda\in (0, \lambda_0]$ the function $f_{\lambda}$ is uniformly locally semiconcave with constant $B_\lambda=\frac{q}{2\lambda}$. Similarly, if $h$ is bounded on $M$, then there exists $\mu_0>0$ such that $h^{\mu}$ is uniformly $\frac{q}{2\mu}$-locally semiconvex for every $\mu\in (0, \mu_0]$.
\item If $f$ is bounded on bounded subsets of $M$ then for every bounded set $B\subset M$ there exists $\lambda_0>0$ such that the restriction of the function $f_{\lambda}$ to $B$ is uniformly locally $\frac{q}{2\lambda}$-semiconcave for every $\lambda\in (0, \lambda_0]$. A similar statement holds for $h^{\mu}$.
\end{enumerate}
\end{proposition}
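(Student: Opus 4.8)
The plan is to exploit the fact that $f_{\lambda}$ is defined as an \emph{infimum}, together with the elementary observation that a pointwise infimum of concave functions is again concave. Writing
$$
f_{\lambda}(x)-B_{\lambda}d(x,x_0)^{2}=\inf_{y\in M}\Bigl\{f(y)+\tfrac{1}{2\lambda}d(x,y)^{2}-B_{\lambda}d(x,x_0)^{2}\Bigr\},
$$
we see that, in order to conclude that the left-hand side is concave on a ball centered at $x_0$, it suffices to prove that for each fixed admissible $y$ the function $x\mapsto\tfrac{1}{2\lambda}d(x,y)^{2}-B_{\lambda}d(x,x_0)^{2}$ is concave there (the term $f(y)$ being constant in $x$). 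Equivalently, $x\mapsto B_{\lambda}d(x,x_0)^{2}-\tfrac{1}{2\lambda}d(x,y)^{2}$ must be convex, and this is \emph{exactly} Lemma \ref{convexity lemma}(2) applied with $z_0=x_0$, $y_0=y$, $B=B_{\lambda}=\tfrac{q}{2\lambda}$ and $C=\tfrac{1}{2\lambda}$, for which the required hypothesis $B\geq qC$ holds with equality. The only thing to watch is that Lemma \ref{convexity lemma}(2) demands that the points $y$ lie in the small ball $B(x_0,R')$ supplied by that lemma; hence the real work is to \emph{localize} the infimum defining $f_{\lambda}$ to such a ball.

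For part (1), fix $q>1$ and let $R'=R'(K_0,q)>0$ be as in Lemma \ref{convexity lemma}(2); since $2R'<\min\{i(M),c(M)\}$, every ball $B(x_0,R')$ is geodesically convex. As $|f|\leq N$, the second assertion of Proposition \ref{localization} lets us restrict the infimum defining $f_{\lambda}(x)$ to $B(x,2\sqrt{N\lambda})$. I would choose $\lambda_0$ so small that $2\sqrt{N\lambda_0}\leq R'/2$; then for $\lambda\in(0,\lambda_0]$ and $x\in B(x_0,R'/2)$ the minimizing $y$ lies in $B(x,2\sqrt{N\lambda})\subseteq B(x_0,R')$, so that
$$
f_{\lambda}(x)=\inf_{y\in B(x_0,R')}\Bigl\{f(y)+\tfrac{1}{2\lambda}d(x,y)^{2}\Bigr\}\qquad\text{for all } x\in B(x_0,R'/2).
$$
By the previous paragraph each function $x\mapsto f(y)+\tfrac{1}{2\lambda}d(x,y)^{2}-B_{\lambda}d(x,x_0)^{2}$ (with $y\in B(x_0,R')$) is concave on the convex ball $B(x_0,R'/2)$, and taking the infimum over $y$ preserves concavity; hence $f_{\lambda}-B_{\lambda}d(\cdot,x_0)^{2}$ is concave on $B(x_0,R'/2)$. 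Since both the radius $R'/2$ and the constant $B_{\lambda}=\tfrac{q}{2\lambda}$ are independent of $x_0$, this is exactly the uniform local $\tfrac{q}{2\lambda}$-semiconcavity of $f_{\lambda}$. The statement for $h^{\mu}$ then follows at once from the identity $h^{\mu}=-(-h)_{\mu}$: the function $-h$ is bounded and quadratically minorized, so $(-h)_{\mu}$ is $\tfrac{q}{2\mu}$-semiconcave by what we have just shown, and negating turns semiconcavity into semiconvexity.

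For part (2) the same scheme applies once the infimum has been confined to a small ball, and this is where the main obstacle lies: now $f$ is only bounded on bounded sets, and the localization coming from the quadratic minorization alone (the first assertion of Proposition \ref{localization}) yields a radius $\bar{\rho}$ that stays bounded away from $0$ as $\lambda\to0$, so it does not force $y$ into an arbitrarily small ball. I would circumvent this in two steps. Fix a bounded set $B$. First, for $x\in B$ the first assertion of Proposition \ref{localization} (with $\lambda<\tfrac{1}{2c}$) confines the minimizing $y$ to a ball $B(x,\rho)$ whose radius is uniformly bounded over $x\in B$ and small $\lambda$, because both $f(x)$ and $d(x,x_0)$ are bounded for $x\in B$; in particular all such $y$ lie in a \emph{fixed} bounded set $B''\supseteq B$. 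Second, since $y$ is now confined to $B''$, on which $f$ is bounded, say $-N''\leq f\leq N''$, and since $f_{\lambda}(x)\leq f(x)\leq N''$ for $x\in B$, the same elementary estimate as in the bounded case (now carried out only over $y\in B''$) shows that the minimizer actually satisfies $d(x,y)\leq 2\sqrt{N''\lambda}$, which does shrink with $\lambda$. With this shrinking localization in hand, the argument of part (1) applies verbatim on small balls centered at points of $B$, yielding the uniform local $\tfrac{q}{2\lambda}$-semiconcavity of $f_{\lambda}$ on $B$ for all sufficiently small $\lambda$ (now depending on $B$).

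Thus the genuinely delicate point, in both parts, is not the convexity computation — that is entirely provided by Lemma \ref{convexity lemma}(2) with the sharp ratio $B=qC$ — but the localization forcing the near-minimizer $y$ into a ball small enough for that lemma to apply. In part (1) this is immediate from the boundedness of $f$, whereas in part (2) the crude quadratic-minorization localization is insufficient on its own and must be upgraded by the two-stage argument above.
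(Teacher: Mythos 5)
Your proof is correct and takes essentially the same approach as the paper's: localize the infimum via Proposition \ref{localization}, feed the fixed near-minimizers into Lemma \ref{convexity lemma}(2) with $B=qC=\frac{q}{2\lambda}$ on the ball $B(x_0,R')$ supplied by that lemma, and conclude by the stability of concavity (resp.\ convexity) under pointwise infima (resp.\ suprema) --- the paper merely argues dually, proving the statement for $h^{\mu}$ and transferring it to $f_{\lambda}$ via $h^{\mu}=-(-h)_{\mu}$, whereas you work with $f_{\lambda}$ directly. Your two-stage localization in part (2) (first confining $y$ to a fixed bounded set $B''$ by the quadratic-minorization part of Proposition \ref{localization}, then rerunning the bounded-case estimate with $N''=\sup_{B''}|f|$ to get a radius shrinking like $2\sqrt{N''\lambda}$) is exactly the detail the paper compresses into its one-line remark that one should ``use the first part of Proposition \ref{localization}'' for $x_0$ ranging over a bounded set.
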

\begin{proof}
It will suffice to prove the Proposition for the functions $h^{\mu}$.
Given $q>1$, let us fix an $R=R(q, K_0)>0$ such that $(1)$ and $(2)$ of Lemma \ref{convexity lemma} hold for $R$ (we may assume $R'=R$ in $(2)$ of this Lemma by making the $R$ in $(1)$ smaller, if necessary).
Using Proposition \ref{localization} we can write
$$
h^{\mu}(x)=\sup_{y\in B(x, \sqrt{k\mu})}\{h(y)-\frac{1}{2\mu} d(x, y)^{2}\}
$$
for all $x\in M$, where $k>0$ is a bound for $|f|$ on $M$. Set $$\mu_0= \frac{R^{2}}{4k}.$$
Then, for every $\mu\in (0, \mu_0]$, every $x_0\in M$, and every $x\in B(x_0, R/2)$, we have that
$$
h^{\mu}(x)=\sup_{y\in B(x, \sqrt{k\mu})}\{h(y)-\frac{1}{2\mu} d(x, y)^{2}\},
$$
and, because $B(x, \sqrt{k\mu})\subseteq B(x, R/2)\subseteq B(x_0, R)$, we also have
$$
h^{\mu}(x)=\sup_{y\in B(x_0, R)}\{h(y)-\frac{1}{2\mu} d(x, y)^{2}\} \textrm{ for every } x\in B(x_0, R/2).
$$
On the other hand, according to Lemma \ref{convexity lemma}(2), we have that for every $C_\mu:=\frac{1}{2\mu}>0$ and every $B_{\mu}\geq q C_\mu$ the function
$$
B(x_0, R)\ni x \mapsto B_{\mu} d(x,x_0)^{2}- C_\mu d(x, y)^{2}
$$
is convex for every $y\in B(x_0, R)$. Since the supremum of a family of convex functions is always convex, we then have that the function
$$
x\mapsto \sup_{y\in B(x_0, R)}\{h(y)-\frac{1}{2\mu} d(x, y)^{2}+B_{\mu} d(x, x_0)^{2}\}=h^{\mu}(x)+B_{\mu} d(x, x_0)^{2}
$$
is convex on the ball $B(x_0, R/2)$. This shows $(1)$. The proof of $(2)$ is similar (one just has to use the first part of Proposition \ref{localization} in order to see that the argument can be repeated for $x_0$ moving on a fixed bounded set $B$).
\end{proof}
\begin{remark}
{\em It is clear that in general one has $\lambda_0=\lambda_{0}(q,K_0, \|f\|_{\infty})\to 0$ as
$q\to 1^{+}$ (also as $\|f\|_{\infty}\to\infty$).}
\end{remark}

That the regularizations $(f_{\lambda})^{\mu}$ are also uniformly locally semiconcave is
a subtler fact. The proof relies on Lemma \ref{convexity lemma}(1).

\begin{proposition}\label{semiconcavity}
Let $M$ be a Riemannian manifold with sectional curvature $K$ such that $-K_0\leq K\leq K_0$ for some $K>0$, and such that $i(M)>0, c(M)>0$. Let $f:M\to\R$, $h:M\to \R$ be functions such that
$$
f(x)\geq-\frac{c}{2}\left(1+d(x, x_0)^{2}\right), \textrm{ and } h(x)\leq \frac{c}{2}\left(1+d(x, x_0)^{2}\right)
$$
for all $x\in M$ and some $c>0$. Fix $q>1$. Then we have:
\begin{enumerate}
\item If $f$ is bounded on $M$ then there exists $\lambda_0>0$ (depending on $q$, $\|f\|_{\infty}$ and $K_0$) such that for every $\lambda\in (0, \lambda_0]$ and for every $\mu\in (0, \frac{\lambda}{2q}]$ the function $(f_{\lambda})^{\mu}$ is uniformly locally semiconcave with constant $B_{\mu} =\frac{q}{2\mu}$.
\item If $f$ is bounded on bounded subsets of $M$ then for every bounded set $B\subset M$ there exists $\lambda_0>0$ such that for every $\lambda\in (0, \lambda_0]$  and for every $\mu\in (0, \frac{\lambda}{2q}]$ the restriction of the function $(f_{\lambda})^{\mu}$ to to $B$ is uniformly locally semiconcave with constant $B_{\mu} =\frac{q}{2\mu}$.
\end{enumerate}
Similar statements (replacing semiconcavity with semiconvexity and interchanging the roles of $\lambda, \mu$) hold for the functions $(h^{\mu})_{\lambda}$.
\end{proposition}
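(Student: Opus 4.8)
The plan is to prove the semiconcavity of $(f_{\lambda})^{\mu}$ in case (1); the statement for $(h^{\mu})_{\lambda}$ will then follow from the symmetry $g^{\mu}=-(-g)_{\mu}$ recorded in the introduction (applied to $-f$, with the roles of $\lambda,\mu$ interchanged), and (2) will follow from the very same argument, using the first part of Proposition \ref{localization} to keep the basepoint $x_0$ on a fixed bounded set. Since $f$ is bounded, say $|f|\leq k$, the regularization $f_{\lambda}$ is also bounded by $k$. I would first fix $R=R(K_0,q)>0$ so that both conclusions of Lemma \ref{convexity lemma} hold on balls of radius $R$, and then choose $\lambda_0=\lambda_0(q,k,K_0)>0$ so small that, for every $\lambda\in(0,\lambda_0]$ and $\mu\in(0,\lambda/2q]$, the localization radii $2\sqrt{k\mu}$ and $2\sqrt{k\lambda}$ furnished by Proposition \ref{localization} are at most $R/4$ and $R/2$ respectively.

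By Definition \ref{definiton of semiconvex function}, showing that $(f_{\lambda})^{\mu}$ is uniformly locally semiconcave with constant $B_{\mu}=\frac{q}{2\mu}$ amounts to showing that $-(f_{\lambda})^{\mu}+B_{\mu}d(\cdot,x_0)^{2}$ is convex on a ball $B(x_0,R/4)$ whose radius does not depend on $x_0$. I would rewrite this function as a nested inf–sup: since $-f_{\lambda}(z)=\sup_{y}\{-f(y)-\frac{1}{2\lambda}d(z,y)^{2}\}$, one gets
$$
-(f_{\lambda})^{\mu}(x)+B_{\mu}d(x,x_0)^{2}=\inf_{z}\sup_{y}\Big\{-f(y)+\tfrac{1}{2\mu}d(x,z)^{2}+B_{\mu}d(x,x_0)^{2}-\tfrac{1}{2\lambda}d(z,y)^{2}\Big\}.
$$
The two localizations then ensure that for $x\in B(x_0,R/4)$ the inf over $z$ may be restricted to $B(x,2\sqrt{k\mu})\subseteq B(x_0,R/2)$ and, for each such $z$, the sup over $y$ to $B(z,2\sqrt{k\lambda})\subseteq B(x_0,R)$; thus all of $x,z,y$ range in $B(x_0,R)$.

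The heart of the argument is the joint-convexity input. For each fixed $y\in B(x_0,R)$, the function
$$
(x,z)\longmapsto \tfrac{1}{2\mu}d(x,z)^{2}+B_{\mu}d(x,x_0)^{2}-\tfrac{1}{2\lambda}d(z,y)^{2}
$$
is exactly of the form treated in Lemma \ref{convexity lemma}(1), with the correspondence $A=\frac{1}{2\mu}$, $C=\frac{1}{2\lambda}$, $B=B_{\mu}=\frac{q}{2\mu}$, and with the lemma's second slot played by $z$ and its ``$y_0$'' played by $y$. The hypotheses are met: $B=qA$ holds by the very choice $B_{\mu}=\frac{q}{2\mu}$, and $A\geq 2C$ holds because $\mu\leq\frac{\lambda}{2q}$ gives $A=\frac{1}{2\mu}\geq\frac{q}{\lambda}\geq\frac{1}{\lambda}=2C$. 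Hence each inner expression is jointly convex in $(x,z)$ on $B(x_0,R)\times B(x_0,R)$; taking the supremum over $y$ preserves joint convexity (a supremum of convex functions is convex), and finally taking the infimum over $z$ yields a convex function of $x$ by Lemma \ref{the partial inf of a jointly convex is convex}. This establishes the desired convexity of $-(f_{\lambda})^{\mu}+B_{\mu}d(\cdot,x_0)^{2}$ on $B(x_0,R/4)$, with $R/4$ and $B_{\mu}$ independent of $x_0$, which is precisely uniform local semiconcavity with constant $\frac{q}{2\mu}$.

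The main obstacle I anticipate is bookkeeping the nested localizations \emph{uniformly in $x_0$} so that the restricted inf over $z$ and sup over $y$ genuinely coincide with the unrestricted ones on the convex ball where Lemma \ref{convexity lemma}(1) is valid, and so that the minimizer in $z$ remains interior and Lemma \ref{the partial inf of a jointly convex is convex} can legitimately be invoked to descend from joint convexity in $(x,z)$ to convexity in $x$. Conceptually, the delicate point is the coupling of the two parameters: it is exactly the smallness of $\mu$ relative to $\lambda$ that makes $A=\frac{1}{2\mu}$ dominate $2C=\frac{1}{\lambda}$, i.e.\ that lets the strong convexity of $\frac{1}{2\mu}d(x,z)^{2}$ together with $B_{\mu}d(x,x_0)^{2}$ absorb the concavity of the term $-\frac{1}{2\lambda}d(z,y)^{2}$ coming from the inner inf-convolution; this is the curvature-sensitive compensation that Lemma \ref{convexity lemma}(1) is designed to provide and the reason the bound $\mu\leq\lambda/2q$ appears.
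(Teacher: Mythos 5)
Your proof is correct, but it follows a genuinely different route from the paper's. The paper argues in two stages: it first invokes Proposition \ref{semiconvexity} to get that the inner regularization $f_{\lambda}$ is itself uniformly locally semiconcave with constant $C_{\lambda}=\frac{q}{2\lambda}$, and then applies Lemma \ref{convexity lemma}(1) with $C=C_{\lambda}$, $A=\frac{1}{2\mu}$, $B=B_{\mu}$, writing $f_{\lambda}(y)-\frac{1}{2\mu}d(x,y)^{2}-B_{\mu}d(x,x_0)^{2}$ as the sum of the concave function $f_{\lambda}(y)-C_{\lambda}d(y,x_0)^{2}$ and the jointly concave combination $C_{\lambda}d(y,x_0)^{2}-\frac{1}{2\mu}d(x,y)^{2}-B_{\mu}d(x,x_0)^{2}$; a single application of the concave half of Lemma \ref{the partial inf of a jointly convex is convex} to the sup over $y$ then finishes, and the lemma's hypothesis $A\geq 2C_{\lambda}$ is exactly the threshold $\mu\leq\lambda/2q$. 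You instead unfold both convolutions into a nested $\inf_{z}\sup_{y}$ and apply Lemma \ref{convexity lemma}(1) with the \emph{raw} coefficient $C=\frac{1}{2\lambda}$ for each frozen $y$, so the semiconcavity of $f_{\lambda}$ (and hence Proposition \ref{semiconvexity}) plays no role in your argument; the sup over $y$ is then the trivial direction, and the inf over $z$ is handled by Lemma \ref{the partial inf of a jointly convex is convex} on the convex ball $B(x_0,R/2)$, with the double localization correctly identified via the sandwich $\inf_{M}\leq\inf_{B(x_0,R/2)}\leq\inf_{B(x,2\sqrt{k\mu})}$ (and its analogue for the inner sup), so that the restricted inf--sup coincides with $-(f_{\lambda})^{\mu}+B_{\mu}d(\cdot,x_0)^{2}$ on $B(x_0,R/4)$ --- the bookkeeping point you flag at the end is indeed the only delicate one, and your treatment of it is sound. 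The trade-off: the paper's factorization reuses Proposition \ref{semiconvexity}, which is needed anyway for the semiconvexity half of Theorem \ref{main theorem}, and makes the provenance of $\lambda/2q$ transparent; your one-shot argument is self-contained modulo the two lemmas, and since your verification of $A\geq 2C$ only uses $\frac{1}{2\mu}\geq\frac{1}{\lambda}$, it actually delivers the $\frac{q}{2\mu}$-semiconcavity on the slightly larger parameter range $0<\mu\leq\lambda/2$ (incidentally matching the range $0<\mu<\lambda/2$ quoted in the paper's abstract), at the cost of one extra application of Lemma \ref{the partial inf of a jointly convex is convex}.
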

\begin{proof}
Let us assume that $f$ is bounded.
Since $\inf f=\inf f_{\lambda}$ and $f_{\lambda}\leq f$, it is clear that $f_{\lambda}$ is bounded as well, and in fact $\|f_{\lambda}\|_{\infty}\leq \|f\|_{\infty}$ for every $\lambda$. If we take $\lambda_{0}=\mu_0$ and $R$ as in the proof of the preceding Proposition (in particular is $R$ as in the statement of Lemma \ref{convexity lemma}(1)), this implies that for every $(\lambda,\mu)\in (0, \lambda_0]\times (c, \lambda_0]$, for every $x_0\in M$, and for every $x\in B(x_0, R/2)$, we have that
$$
h^{\mu}(x)=\sup_{y\in B(x_0, R)}\{f_{\lambda}(y)-\frac{1}{2\mu} d(x, y)^{2}\}.
$$
Now, for every $\lambda\in (0, \lambda_0]$, using the preceding Proposition, we have that the function
$$
B(x_0, R)\ni y \mapsto f_{\lambda}(y)-C_{\lambda} d(y, x_0)^{2}
$$
is concave, where
$$
C_{\lambda}:=\frac{q}{2\lambda}.
$$
According to Lemma \ref{convexity lemma}(1) (taking $C=C_\lambda$, $A=1/2\mu$, $B\geq qA$), for every $\mu>0$ such that
$$
\frac{1}{2\mu}\geq \frac{q}{\lambda}
$$
and for every $B_{\mu}\geq\frac{q}{2\mu}$, the function
$$
B(x_0, R)\times B(x_0, R)\ni (x,y)\mapsto \frac{1}{2\mu} d(x,y)^{2}+B_{\mu} d(x, x_0)^{2}-C_{\lambda} d(y, x_0)^{2}
$$
is convex.
Equivalently, the function
$$
B(x_0, R)\times B(x_0, R)\ni (x,y)\mapsto C_{\lambda} d(y, x_0)^{2} - \frac{1}{2\mu} d(x,y)^{2} - B_{\mu} d(x, x_0)^{2}
$$
is concave. Therefore the function
\begin{eqnarray*}
& &
B(x_0, R)\times B(x_0, R)\ni (x,y)\mapsto
 f_{\lambda}(y) - \frac{1}{2\mu} d(x,y)^{2} - B_{\mu} d(x, x_0)^{2}=\\
& &
\left(f_{\lambda}(y)-C_{\lambda} d(y, x_0)^{2}\right)+\left(C_{\lambda} d(y, x_0)^{2} - \frac{1}{2\mu} d(x,y)^{2} - B_{\mu} d(x, x_0)^{2}\right),
\end{eqnarray*}
being a sum of concave functions, is concave as well, for every $\mu$ with
$$
0<\mu\leq \frac{\lambda}{2q}.
$$
Hence, using Lemma \ref{the partial inf of a jointly convex is convex} (note that the manifold $B(x_0, R)$ does have the property that every two points can be connected by a minimizing geodesic in $B(x_0, R)$, because of the definition of $R$ in the proof of Lemma \ref{convexity lemma}), we deduce that the function
\begin{eqnarray*}
&
B(x_0, R/2)\ni x\mapsto  & \sup_{y\in B(x_0, R)}\{
 f_{\lambda}(y) - \frac{1}{2\mu} d(x,y)^{2} - B_{\lambda} d(x, x_0)^{2}\}=\\
 & &
 (f_{\lambda})^{\mu}(x)-B_{\lambda} d(x, x_0)^{2}
\end{eqnarray*}
is concave, and this concludes the proof of $(1)$. The proof of $(2)$ is similar and we leave it to the reader's care.
\end{proof}

Theorem \ref{main theorem} immediately follows by combining the preceding Propositions
and the results of sections $2$, $5$ and $6$.

\medskip

\section{Two counterexamples}

If $f$ is a quadratically minorized function defined on $\R^n$ or on the Hilbert space,
then it is known that the functions $(f_{\lambda})^{\mu}$ are of class $C^{1,1}$, no
matter whether $f$ is bounded or not, see \cite{AttouchAze}. An examination of
the above proofs reveals that this result remains true for functions $f$ defined on a
flat Riemannian manifold. However, if $K\neq 0$, in order to obtain $C^{1,1}$ smoothness
of the functions $(f_{\lambda})^{\mu}$, one has to require that both $f$ and $K$ be
bounded (as we did in the statement of Theorem \ref{main theorem}). We next present some
examples showing why this is so.

Let us first see that, even on Cartan-Hadamard manifolds with constant curvature (that is to say, hyperbolic spaces), one cannot dispense with the boundedness assumption on $f$.
\begin{example}\label{counterexample on hyperbolic space}
{\em Let us take $M=\mathbb{H}^{n}$, the hyperbolic space of constant curvature equal to $-1$, modelled on the upper half-space of $\R^n$, with $n\geq 2$.
Let $f:H\to\R$ be defined by
$$
f(x)=d(x,x_0)^{2},
$$
where $d$ denotes the Riemannian distance in $\mathbb{H}^{n}$ and $x_0\in \mathbb{H}^{n}$ is a given point.
The function $f$ is bounded below by $0$, and in particular quadratically minorized. It is also clear that $f$ is uniformly continuous on bounded subsets of $\mathbb{H}^{n}$. We will calculate the functions $(f_{\lambda})^{\mu}$ in this case and see that they are not $C^{1,1}(\mathbb{H}^{n})$.

The function $\mathbb{H}^{n}\ni y\mapsto h(y):=d(y, x_0)^{2}+\frac{1}{2\lambda}d(x, y)^{2}$ is $C^{\infty}$ (because $\mathbb{H}^{n}$ is a Cartan-Hadamard manifold). One can easily see that $\nabla h(y_x)=0$ if and only if $y_x$ is in the geodesic connecting $x$ to $x_0$ and
$$
d(x,y_x)=\frac{\lambda}{1+\lambda}d(x, x_0).
$$
Taking into account the behavior of $h$ at infinity, we infer that
$$\inf_{y\in H}\{d(y, x_0)^{2}+\frac{1}{2\lambda}d(x,y)^2\}=
d(y_x, x_0)+\frac{1}{2\lambda}d(x, y_x)^2.$$ Therefore
$$
f_{\lambda}(x)=h(y_x)=\left(\frac{1}{(1+\lambda)^{2}}+\frac{\lambda}{2(1+\lambda)^{2}}\right)
d(x, x_0)^{2}=\frac{2+\lambda}{2(1+\lambda)^{2}} d(x, x_0)^{2},
$$
which can also be written
$$
f_{\lambda}(x)=\frac{1}{2\lambda'} d(x, x_0)^{2}
$$
for a suitable number $\lambda'>0$.

Similarly, if one considers the function $\mathbb{H}^{n}\ni z\mapsto \psi(z)=\frac{1}{\lambda'}d(z, x_0)^{2}-\frac{1}{2\mu}d(x, z)^{2}$ one can see that
$\nabla\psi(z_x)=0$ exactly when $z_x$ is in the geodesic passing through $x$ and $x_0$, and $\lambda' d(z_x, x) =\mu d(z_x, x_0)$. Taking into account the behaviour of $\psi$ at infinity one can also deduce that
$$
(f_{\lambda})^{\mu}(x)=\sup_{z}\psi(z)=\psi(z_x)=\frac{1}{2(\lambda'-\mu)}d(x, x_0)^{2}.
$$
We do not care about a more explicit expression for $(f_{\lambda})^{\mu}$; the only interesting point is that $(f_{\lambda})^{\mu}= C_{\lambda, \mu} f$ for some positive constant $C_{\lambda, \mu}$.

Therefore it is clear that if $(f_{\lambda})^{\mu}$ were of class $C^{1,1}$ then so would be the square of the distance function, $x\mapsto d(x,x_0)^{2}=f(x)$.
But, in the case of the hyperbolic space $\mathbb{H}^{n}$ one has the following explicit formula for
the Hessian of the square of the distance to a point $x_0$:
$$
D^{2}f(x)(v)^{2}=2\|v\|^{2}\left(\frac{ d(x, x_0)\cosh\left(d(x, x_0)\right)}{\sinh\left( d(x, x_0)\right)}\right)
$$
(see \cite{Sakai} for instance).
Now, because
$
\lim_{t\to\infty}\frac{t\cosh t}{\sinh t}=\infty,
$
it follows that
$
\lim_{d(x, x_0)\to\infty}\|D^{2}f(x)\|=\infty,
$
and since the Hessian of $f$ is unbounded on $\mathbb{H}^{n}$, the gradient of $f$ cannot be Lipschitz on $\mathbb{H}^{n}$.
}
\end{example}

\medskip

Now we will construct an example showing that, even if $f:M\to\R$ is bounded, one has to require that the sectional curvature $K$ of $M$ be bounded, in order that $(f_{\lambda})^{\mu}$ be of class $C^{1,1}$ globally.

\begin{example}
{\em Let $M$ be the half-space of $\R^2$ given by $\{(x_1,x_2)\in\R^2 : x_2>0\}$, with the metric
$$
g_{ij}(x_1, x_2)=\frac{\delta_{ij}}{{x_2}^{4}}.
$$
It is not difficult to show that the curvature of $M$ at a point $p=(x_1, x_2)$ is given by
$
K_p=-2{x_{2}}^{2},
$
and using this fact one can also check that there exists a sequence $(p_n)\subset M$ such that $d(p_n, p_m)\geq 4$ for $n\neq m$ and $K_p\leq -4n^2$ for every $p\in B(p_n, 1)$. Now let us define a function $f:M\to [0,2]$ by
$$
f(p)=\min\{2, \inf_{n\in\N} d(p, p_n)\}.
$$
The function $f$ is obviously bounded and $1$-Lipschitz. Now, the calculation of $((d(\cdot, x_0)^{2})_{\lambda})^{\mu}$ that we carried out in the preceding example works in any
Cartan-Hadamard manifold, hence one can use this fact and Proposition \ref{localization} to see that there exists some $\lambda_0>0$ such that for every $\lambda\in (0, \lambda_0]$ and $\mu\in (0, \lambda)$ there exists a number $C_{\lambda, \mu}>0$ such that
$$
(f_{\lambda})^{\mu}(p)=C_{\lambda,\mu}d(p, p_n)^{2} \textrm{ for every } p\in B(p_n, 1).
$$
Using \cite[Exercise 4 following Lemma 2.9 in Chapter IV, p. 154]{Sakai}, we get that
$$
\|D^{2}(f_{\lambda})^{\mu}(p)\|=\sup_{\|v\|=1}\|D^{2}(f_{\lambda})^{\mu}(p)(v)^2\|\geq
C_{\lambda,\mu}\frac{2n d(p, p_n)\cosh\left(2n d(p,p_n)\right)}{\sinh\left(2n d(p, p_n)\right)}
$$
for every $p\in B(p_n, 1)$. Taking $q_n\in B(p_n, 1)$ with $d(q_n, p_n)=1/2$ we have
$$
\lim_{n\to\infty}\|D^{2}(f_{\lambda})^{\mu}(q_n)\|\geq\lim_{n\to\infty}C_{\lambda, \mu} \frac{n\cosh\left(n\right)}{\sinh\left(n\right)}=\infty,
$$
hence $\|D^{2}(f_{\lambda})^{\mu}\|$ is unbounded on $M$ and consequently $(f_{\lambda})^{\mu}\notin C^{1,1}(M)$.
}
\end{example}



\begin{thebibliography}{}

\bibitem{AnguloVelasco}
J. Angulo and S. Velasco-Forero, {\em Mathematical morphology for real-valued images
on Riemannian manifolds.} In Proc. of ISMM'13 (11th International
Symposium on Mathematical Morphology), Springer LNCS 7883, p. 279--291, 2013.

\bibitem{AttouchAze}
H. Attouch, D. Az\'e, {\em Approximation and regularization of arbitrary functions in Hilbert spaces by the Lasry-Lions method}. Ann. Inst. H. Poincar'e Anal. Non Lin\'eaire 10 (1993), no. 3, 289--312.

\bibitem{A}
D. Azagra, {\em Global and fine approximation of convex functions}. Proc. Lond. Math. Soc.
(3) 107 (2013), no. 4, 799--824.

\bibitem{AF} D. Azagra and J. Ferrera, {\em Inf-convolution and regularization of convex functions  on Riemannian manifolds of nonpositive curvature}. Rev. Mat. Complut. 19 (2006), no. 2, 323--345.

\bibitem{AFL2} D. Azagra, J. Ferrera, F. L\'{o}pez-Mesas, {\em
Nonsmooth analysis and Hamilton-Jacobi equations on Riemannian
manifolds}, J. Funct. Anal. 220 (2005) no. 2, 304--361.

\bibitem{Bangert}
V. Bangert, {\em Analytische Eigenschaften konvexer Funktionen auf
Riemannschen Mannigfaltigkeiten}, J. Reine Angew. Math. 307/308
(1979), 309--324.

\bibitem{Bernard}
P. Bernard, {\em Existence of $C^{1,1}$ critical sub-solutions of the Hamilton-Jacobi equation on compact manifolds}. Ann. Sci. \'Ecole Norm. Sup. (4) 40 (2007), no. 3, 445--452.

\bibitem{CheegerEbin}
J. Cheeger, D.G. Ebin, {\em Comparison theorems in Riemannian geometry.} North-Holland
Mathematical Library, Vol. 9. North-Holland Publishing Co.,
 Amsterdam--Oxford; American Elsevier Publishing Co., Inc., New York, 1975.

\bibitem{doCarmo}
M.P. do Carmo, {\em Riemannian geometry}.  Mathematics: Theory and Applications. Birkh\"auser Boston, Inc., Boston, MA, 1992.

\bibitem{Fathi}
A. Fathi, {\em Regularity of $C^{1}$ solutions of the Hamilton-Jacobi equation}. Ann. Fac. Sci. Toulouse Math. (6) 12 (2003), no. 4, 479--516.

\bibitem{FathisBook}
A. Fathi, {\em Weak KAM Theorem in Lagranian Dynamics}. Book to appear.

\bibitem{GreeneWu}
R. E. Greene, and H. Wu, {\em $C\sp{\infty }$ convex functions and
manifolds of positive curvature}, Acta Math. 137 (1976), no. 3-4,
209--245.

\bibitem{Klingenberg}
W.P.A. Klingenberg, {\em Riemannian geometry}. Second edition. de Gruyter Studies in Mathematics, 1. Walter de Gruyter \& Co., Berlin, 1995.

\bibitem{Lang}
S. Lang, {\em Fundamentals of Differential Geometry}, Graduate
Texts in Mathematics, 191. Springer-Verlag, New York, 1999.

\bibitem{Lasry-Lions}
J.-M. Lasry, and P.-L. Lions,  {\em A remark on regularization in
Hilbert spaces.} Israel J. Math. 55 (1986), no. 3, 257--266.

\bibitem{Sakai}
T. Sakai, {\em Riemannian geometry}. Translations of Mathematical Monographs, 149. American Mathematical Society, Providence, RI, 1996.

\bibitem{Sasaki}
S. Sasaki, {\em On the differential geometry of tangent bundles of Riemannian manifolds}.
T\^ohoku Math. J. (2) 10 (1958) 338--354.

\end{thebibliography}
\end{document}